\DeclareMathOperator{\Prob}{\mathbb{P}}   
\DeclareMathOperator{\cov}{Cov}             
\DeclareMathOperator{\Cov}{Cov}             
\newcommand{\1}{\mathds{1}}
\newcommand{\Id}{{\mathrm{Id}}}
\numberwithin{equation}{section}
\newcommand{\btheta}{\bm{\theta}}
\newcommand{\eps}{\varepsilon}
\newcommand{\rd}{{\rm d}}
\newcommand{\mc}{\mathcal}
\newcommand{\be}{\begin{equation}}
\newcommand{\ee}{\end{equation}}
\newcommand{\e}{{\varepsilon}}
\newcommand{\T}{\mathbb T}
\newcommand{\wt}{\widetilde}
\newcommand{\ii}{\mathrm{i}} 
\renewcommand{\epsilon}{\varepsilon}
\renewcommand{\leq}{\leqslant}
\renewcommand{\geq}{\geqslant}
\newcommand{\E}{\mathbb{E}}
\newcommand{\R}{\mathbb{R}}
\newcommand{\N}{\mathbb{N}}
\newcommand{\Z}{\mathbb{Z}}
\def\Xint#1{\mathchoice
{\XXint\displaystyle\textstyle{#1}}%
{\XXint\textstyle\scriptstyle{#1}}%
{\XXint\scriptstyle\scriptscriptstyle{#1}}%
{\XXint\scriptscriptstyle\scriptscriptstyle{#1}}%
\!\int}
\def\XXint#1#2#3{{\setbox0=\hbox{$#1{#2#3}{\int}$ }
\vcenter{\hbox{$#2#3$ }}\kern-.6\wd0}}
\def\dashint{\Xint-}
\newcommand{\mb}{\mathbb}
\DeclareMathOperator{\tr}{Tr}
\DeclareMathOperator{\Tr}{Tr}
\DeclareMathOperator{\Var}{Var}
\DeclareMathOperator{\re}{Re}
\DeclareMathOperator{\im}{Im}
\DeclareMathOperator{\sgn}{sgn}
\DeclareMathOperator{\OO}{O}
\DeclareMathOperator{\oo}{o}
\theoremstyle{plain} 
\newtheorem{theorem}{Theorem}[section]
\newtheorem*{theorem*}{Theorem}
\newtheorem{lemma}[theorem]{Lemma}
\newtheorem*{lemma*}{Lemma}
\newtheorem{corollary}[theorem]{Corollary}
\newtheorem*{corollary*}{Corollary}
\newtheorem{proposition}[theorem]{Proposition}
\newtheorem*{proposition*}{Proposition}
\newtheorem{definition}[theorem]{Definition}
\newtheorem*{definition*}{Definition}
\newtheorem*{example*}{Example}
\newtheorem{remark}[theorem]{Remark}
\newtheorem*{remark*}{Remark}
\newtheorem*{remarks*}{Remarks}
\renewcommand{\subsection}{\@startsection
{subsection}
{2}
{0mm}
{-\baselineskip}
{0 \baselineskip}
{\normalfont\bf\itshape}} 
\newcommand{\cor}{\color{red}}
\newcommand{\nc}{\normalcolor}
\def\@empty{}
\def\author#1{\par
    {\centering{\authorfont#1}\par\vspace*{0.05in}}
}
\def\titlefont{\fontsize{13}{15}\bfseries\boldmath\selectfont\centering{}}
\def\authorfont{\fontsize{13}{15}}
\def\abstractfont{\fontsize{8}{10}}
\let\affiliationfont\rhfont
\def\address#1{\par
    {\centering{\affiliationfont#1\par}}\par\vspace*{11pt}
}
\def\body{
\setcounter{footnote}{0}
\def\thefootnote{\alph{footnote}}
\def\@makefnmark{{$^{\rm \@thefnmark}$}}
}
\def\title#1{
    \thispagestyle{plain}
    \vspace*{-14pt}
    \vskip 79pt
    {\centering{\titlefont #1\par}}%
    \vskip 1em
}
\renewenvironment{abstract}{\par%
    \vspace*{6pt}\noindent 
    \abstractfont
    \noindent\leftskip10pt\rightskip10pt
}{%
  \par}
\newcommand{\f}[1]{\boldsymbol{\mathrm{#1}}}
\newcommand{\mf}{\mathfrak}
\newcommand{\mscr}{\mathscr}
\renewcommand{\section}{\@startsection
{section}
{1}
{0mm}
{-2\baselineskip}
{1\baselineskip}
{\normalfont\large\scshape\centering}} 
\begin{document}

~\vspace{-1cm}

~\hspace{-0.3cm}

\title{
Liouville quantum gravity from random matrix dynamics}

\vspace{0.6cm}

\noindent\begin{minipage}[b]{0.5\textwidth}

 \author{Paul Bourgade}

\address{Courant Institute,\\ New York University\\
   bourgade@cims.nyu.edu}

 \end{minipage}
\noindent\begin{minipage}[b]{0.5\textwidth}
 \author{Hugo Falconet}

\address{Courant Institute,\\ New York University\\
  hugo.falconet@cims.nyu.edu}

 \end{minipage}

\begin{abstract}
We establish the first connection between $2d$ Liouville quantum gravity and natural dynamics of random matrices.  In particular, we show that if $(U_t)$ is a Brownian motion on the unitary group at equilibrium, then the measures
$$
|\det(U_t - e^{\ii \theta})|^{\gamma} \rd t\rd \theta
$$
converge in the limit of large dimension to the $2d$ LQG measure, a properly normalized exponential of the $2d$ Gaussian free field. Gaussian free field type fluctuations associated with these dynamics were first established by Spohn (1998) and convergence to the LQG measure in $2d$ settings was conjectured since the work of Webb (2014), who proved the convergence of related one dimensional measures by using inputs from Riemann-Hilbert theory.  

The convergence follows from the first multi-time extension of the result by Widom (1973) on Fisher-Hartwig asymptotics of Toeplitz determinants with real symbols. To prove these, we develop a general surgery argument and combine determinantal point processes estimates with stochastic analysis on Lie group, providing in passing a probabilistic proof of Webb's $1d$ result.  We believe the techniques will be more broadly applicable to matrix dynamics out of equilibrium,  joint moments of determinants for classes of correlated random matrices, and the characteristic polynomial of non-Hermitian random matrices.

\noindent 
\end{abstract}

\vspace{-0.5cm}

\tableofcontents

\vspace{-0.5cm}
\noindent

\begin{figure}[h]
\begin{tikzpicture}
\draw[white,line width=0pt] (-.1\textwidth, -.1\textheight) rectangle (.1\textwidth, .1\textheight);
\pgftext[center,at={\pgfpoint{-95}{0}}]{\includegraphics[width=3cm,height=4cm]{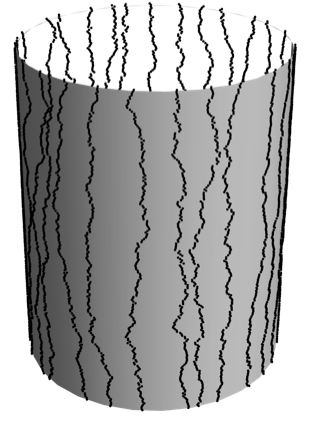}}
\pgftext[center,at={\pgfpoint{95}{0}}]{\includegraphics[width=4cm,height=5cm]{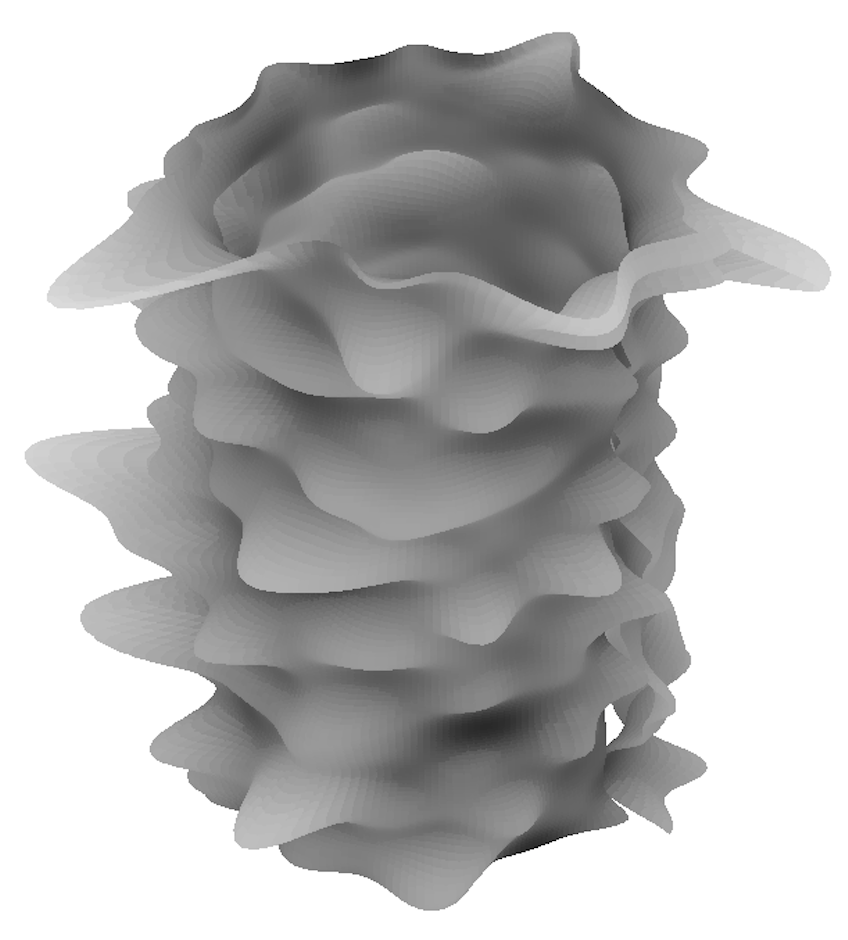}}
\draw [-stealth](-1.2,0) -- (1,0);
\node[rectangle, draw=none, minimum size=1pt] at (-0.1, 0.5) {$e^{\gamma H(\theta, t)}$};
\node[rectangle, draw=none, minimum size=1pt] at (-0.3, -1) {$H=\sum\log|z_k(t)-e^{\ii\theta}|$};
\draw [-stealth](-5,-0.6) -- (-5,0.6);
\node[rectangle, draw=none, minimum size=1pt] at (-5.3, 0) {$t$};
\end{tikzpicture}
\caption*{The (random) Gibbs measure $e^{\gamma H}$ converges to the Gaussian multiplicative chaos on the cylinder,  with $H$ the electric field associated to non-intersecting Brownian motions  on the circle, $\bold{z}(t)$.}
\end{figure}
\newpage
\section{Introduction}

The Gaussian multiplicative chaos (GMC), introduced by Kahane in \cite{Kah1985},  is the fractal measure
$$
e^{\gamma \phi(z)}  \rd z := \lim_{\eps \to 0} e^{\gamma \phi_{\eps}(z) - \frac{\gamma^2}{2} \E(\phi_\eps(z)^2)}  \rd z,
$$
where $\phi_{\eps}$ is a mollification of a $\log$-correlated Gaussian field $\phi$ on a domain $D \subset \mb{R}^d$ and $\rd z$ denotes the Lebesgue measure on $\mathbb{R}^d$. The regularization and renormalization are necessary because of the negative Sobolev regularity of the field. The convergence holds in probability with respect to the topology of weak convergence and the parameter $\gamma \in (0, \sqrt{2d})$ since the limit is zero above this range \cite{RobertVargas,  Sha2016, Ber2017, RhoVar2014}. The specific case where $\phi$ is a two dimensional Gaussian free field (GFF) (a Gaussian field whose covariance function is  the inverse of the Laplacian) or a one dimensional restriction thereof, has proved to be connected with many different domains in mathematical physics. To name a few, it is the volume form in Liouville quantum gravity (LQG), a metric measure space corresponding to the formal Riemannian metric tensor $``e^{\gamma \phi} (\rd x^2 +\rd y^2)"$  \cite{Pol1981, DupShe2011, DDDF2020, GwyMil2021}; appears in the scaling limit of random planar maps \cite{LeGall2013, Mie2013, MilShe2020, HolSun2019};  interplays through conformal welding with Schramm Loewner Evolutions and the Conformal Loop Ensemble, the scaling limit of interfaces in critical spins and percolation models \cite{AstJonKupSak2011, DupMilShe2014, She2016, MilShe2016, AngSun2021}; played a central role in the rigorous formulation and the resolution of Liouville Conformal Field Theory \cite{DavKupRhoVar2016, DOZZ, GKRVboostrap}; and appears in the construction of a stochastic version of the Ricci flow \cite{DubShe2019}. The literature on this topic is abundant and we refer to the survey \cite{She2022} and references therein.

\medskip

The Brownian motion on the unitary group $U(N)$ is a rich object in random matrix theory. It preserves the Haar measure and, under this initial condition, its eigenvalues have Circular Unitary Ensemble distribution at each fixed time. They satisfy the Dyson dynamics \cite{Dyson62} on the circle and, by the Karlin-McGregor formula \cite{KarMcGreg1959}, can be seen as Brownian motions on the unit circle conditioned not to intersect. As ubiquitous in random matrix theory, we are concerned with the large $N$ limit of observables of this process. The large $N$ limit of the unitary Brownian motion itself is the free unitary Brownian motion \cite{Bia1997II, ColDahKem2018} and this has applications to the large $N$ limit of the Yang-Mills measure on the Euclidean plane with unitary structure group as observed in \cite{Levy2017}. In this paper, we prove the following 

\begin{theorem}
\label{thm:gmc-cv} Let $(U_t)$ be a unitary Brownian motion at equilibrium, as defined in (\ref{eqn:Unitbis}). Then
for every $\gamma \in (0,2\sqrt{2})$,
\begin{equation}
\label{eq:LQGconvergence}
\lim_{N \to \infty }\frac{|\det(U_{t}-e^{\ii \theta})|^\gamma}{\E( |\det(U_{t}-e^{\ii \theta})|^{\gamma} )} \rd t \rd\theta = e^{\gamma h(z)} \rd z
\end{equation}
where $h$ is the Gaussian free field on the cylinder $\mb{R} \times \mb{R}/2\pi \mb{Z}$, $\E(h(z)h(w))= \pi (-\Delta_{\mc{C}})^{-1}(z,w)$, where $\Delta_{\mc{C}} = \partial_t^2 + \partial_\theta^2$.  Moreover,  the convergence is in distribution with respect to the weak topology.
\end{theorem}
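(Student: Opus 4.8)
The plan is to run the standard three-step passage to Gaussian multiplicative chaos, with the random-matrix input entering through sharp asymptotics of joint moments of characteristic polynomials. Write $z=(t,\theta)$ and let $\mu_N(\dd z)$ be the measure on the left of \eqref{eq:LQGconvergence}. Fix a regularization scale $\epsilon>0$ and let $X_{N,\epsilon}(z)$ be a mollification at scale $\epsilon$ of $\log|\det(U_t-e^{\ii\theta})|$ in the space--time variable $z$ --- concretely one may take a Fourier truncation, $X_{N,\epsilon}(t,\theta)=-\re\sum_{1\leq k\leq 1/\epsilon}\tfrac1k e^{-\ii k\theta}\tr(U_t^k)$ --- and set $\mu_{N,\epsilon}(\dd z)=e^{\gamma X_{N,\epsilon}(z)-\frac{\gamma^2}{2}\E[X_{N,\epsilon}(z)^2]}\dd z$. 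I would then establish: (i) for each fixed $\epsilon$, $X_{N,\epsilon}\Rightarrow h_\epsilon$ in distribution as $N\to\infty$, where $h_\epsilon$ is the analogous mollification of the cylinder GFF $h$, so that, $\mu_{N,\epsilon}$ being a continuous functional of $X_{N,\epsilon}$, $\mu_{N,\epsilon}\Rightarrow\mu_\epsilon:=e^{\gamma h_\epsilon-\frac{\gamma^2}2\E[h_\epsilon^2]}\dd z$; (ii) $\mu_\epsilon\to e^{\gamma h}\dd z$ as $\epsilon\to0$, which is classical GMC theory on the whole subcritical range $\gamma<2\sqrt2$; and (iii) $\mu_{N,\epsilon}$ approximates $\mu_N$ in probability, uniformly in $N$, as $\epsilon\to0$. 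Together with tightness of $(\mu_N)$ --- immediate since the normalization and the stationarity of the equilibrium dynamics give $\E[\mu_N(A)]=|A|$ for every Borel $A$ --- a $3\epsilon$-argument then yields $\mu_N\Rightarrow e^{\gamma h}\dd z$. Step (i) is a central limit theorem for linear eigenvalue statistics of $(U_t)$ against smooth test functions; for a single time it is Widom's theorem combined with the Soshnikov/Costin--Lebowitz CLT for determinantal processes, and the multi-time covariance, precisely $\pi(-\Delta_{\mathcal C})^{-1}$, is Spohn's Gaussian-free-field fluctuation result, which I would rederive from the Stratonovich SDE for the group Brownian motion by applying It\^o's formula to the traces $\tr(U_t^k)$ (these satisfy a closed Ornstein--Uhlenbeck-type system in the limit) --- the same computation supplying the joint cumulant bounds needed below.

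Step (iii) is the core, and it reduces, via the truncated-second-moment argument described next, to asymptotics of the joint moments
$$\E\Bigl(\prod_{j=1}^k|\det(U_{t_j}-e^{\ii\theta_j})|^{\gamma}\Bigr)$$
for configurations of space--time points $(t_j,\theta_j)$ --- both sharp $(1+o(1))$ asymptotics when the points are well separated, and matching upper bounds when some coincide, down to the microscopic scale $1/N$. At a single time this is exactly Widom's Fisher--Hartwig formula for Toeplitz determinants with real symbols; for several distinct times there is no Toeplitz determinant, and I would argue by surgery. Split each $\log|\det(U_{t_j}-e^{\ii\theta_j})|$ into a macroscopic part, built from the eigenangles lying at distance $\geq\delta$ from all of $\theta_1,\dots,\theta_k$, and $k$ microscopic remainders, one in a window of size $\delta$ around each $e^{\ii\theta_j}$ at time $t_j$. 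The macroscopic parts, jointly in $j$, are smooth linear statistics and converge by the It\^o/Spohn analysis of step (i) to a jointly Gaussian family with covariance $\pi(-\Delta_{\mathcal C})^{-1}$ regularized at scale $\delta$, contributing a factor $\prod_{i<j}|z_i-z_j|^{-\gamma^2/2}(1+o(1))$ once $\delta\to0$. Each microscopic remainder depends only on the local eigenvalue configuration near $e^{\ii\theta_j}$ at time $t_j$, which converges to a time-dependent sine-kernel process; the surgery consists in showing that, for well-separated points, these windows decouple from one another and from the macroscopic part --- a quantitative decay estimate for the correlation kernels of the relevant determinantal processes, together with a modulus-of-continuity bound for $t\mapsto U_t$ --- so that the joint moment factorizes, up to $1+o(1)$, into the Gaussian moment of the macroscopic parts times a product of single-point local moments, each of which reconstitutes the Fisher--Hartwig normalization $\E[|\det(U_t-e^{\ii\theta})|^{\gamma}]$ with its Barnes-$G$ constant. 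Clusters of nearby points are handled by iterating the surgery at successively finer dyadic scales, with a priori rigidity estimates bounding the eigenvalue counts in each window so as to produce the required upper bounds when points are close.

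Given these moment estimates, I would close the argument with Berestycki's truncated-$L^2$ scheme. Fix $\alpha>0$ and restrict $\mu_N$ to the good event $\{X_{N,\epsilon}(z)\leq\alpha\log(1/\epsilon)\}$, forming $\mu_N^{\alpha,\epsilon}$; define $\mu_{N,\epsilon}^{\alpha}$ analogously from $\mu_{N,\epsilon}$. The two-point moment asymptotics give a bound on $\E[\mu_N^{\alpha,\epsilon}(f)^2]$ uniform in $N$ and $\epsilon$ provided $\alpha$ is small enough relative to $\gamma$ --- automatic in the $L^2$-phase $\gamma<2$, and surviving for $\gamma\in[2,2\sqrt2)$ because the truncation removes precisely the thick points that make the untruncated second moment diverge --- and, combined with the matching asymptotics for the mixed moment $\E[\mu_N^{\alpha,\epsilon}(f)\mu_{N,\epsilon}^{\alpha}(f)]$, yield $\E[(\mu_N^{\alpha,\epsilon}(f)-\mu_{N,\epsilon}^{\alpha}(f))^2]\to0$ as $N\to\infty$ then $\epsilon\to0$. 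Meanwhile the first-moment bound gives $\sup_N\E[\mu_N(\{X_{N,\epsilon}>\alpha\log(1/\epsilon)\})]\to0$ as $\alpha$ approaches its critical value, so $\mu_N^{\alpha,\epsilon}$ and $\mu_N$ are uniformly close. Chaining $\mu_N\approx\mu_N^{\alpha,\epsilon}\approx\mu_{N,\epsilon}^{\alpha}\Rightarrow\mu_\epsilon^{\alpha}\to\mu^{\alpha}\to e^{\gamma h}\dd z$ and letting $\epsilon\to0$, $\alpha\uparrow$ gives step (iii) and hence the theorem. As an alternative for $\gamma\in[2,2\sqrt2)$ one can transfer from the $L^2$-phase by a Kahane convexity comparison with a reference log-correlated field.

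The main obstacle is the multi-time moment asymptotics. Widom's method is intrinsically one-dimensional --- it describes a single Toeplitz determinant --- while the time direction carries no comparable integrable structure, so the whole role of the surgery is to replace it by a soft probabilistic decoupling driven by the group Brownian motion; making this quantitative uniformly over all space--time configurations down to scale $1/N$, over the entire subcritical interval of $\gamma$ (so that the truncated-$L^2$ argument closes up to $2\sqrt2$), and with enough precision to track the Barnes-$G$ constants into the exact normalization, is where essentially all the work lies. A secondary point is that for finite $N$ the measure $\mu_N$ is not a continuous functional of a Gaussian field, so the passage from moment control to convergence in distribution genuinely needs the truncation device rather than a naive method of moments --- the limiting GMC has infinite moments beyond order $8/\gamma^2$.
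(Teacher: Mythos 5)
Your plan for deducing the theorem from the joint moment asymptotics is essentially the paper's: mollify the field, use the one- and two-point Fisher--Hartwig asymptotics (the paper's Theorem~\ref{thm:FH}) to control the $L^2$ distance between the regularized and unregularized measures, invoke Shamov's uniqueness/continuity theorem for the limit $\epsilon\to0$, and conclude. The paper uses the Poisson-kernel smoothing ${\rm P}_\epsilon$ where you use a Fourier cut-off at $k\le 1/\epsilon$, but this is cosmetic. Your step~(i) is also organized differently (Costin--Lebowitz plus a direct It\^o analysis of $\tr U_t^k$, rather than reading the CLT off the $\mathscr A=\varnothing$ case of Theorem~\ref{thm:FH}), though both deliver the same Gaussian covariance $\pi(-\Delta_{\mathcal C})^{-1}$.

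The one genuine difference worth flagging: the paper only carries out the $L^2$ phase $\gamma\in(0,2)$ in detail and cites \cite{Ber2017,LamOstSim2018,NikSakWeb20} to transfer the barrier/truncation argument into the $L^1$ phase $\gamma\in[2,2\sqrt2)$, while you propose to run Berestycki's truncated-$L^2$ scheme in full. That is a perfectly valid and arguably more self-contained way to close the range; the only caveat is that the truncated second moment really does need the \emph{uniform} two-point (and mixed regularized/unregularized) asymptotics down to mesoscopic separation, which is exactly what the uniformity in $\min_{z\ne z'}|e^z-e^{z'}|\ge N^{-1+\delta}$ in Theorem~\ref{thm:FH} is there to provide --- so your appeal to ``matching upper bounds when some coincide, down to the microscopic scale'' is quietly using the sharpest part of the moment theorem, and one must verify that the scale-$\delta$ uniformity in the FH statement survives the truncation.

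Where you are vague is in the sketch of the multi-time FH asymptotics themselves, and this is where, as you correctly observe, essentially all the work lies. Your surgery outline (macroscopic smooth statistics handled by a multi-time CLT; microscopic windows decoupled via kernel decay; local windows giving Barnes-$G$ constants) matches the paper's architecture, but it omits the two technical devices that actually make the decoupling and gluing close: (a) the decoupling is not achievable by the standard additive bounds on differences of Fredholm determinants --- one needs a multiplicative decorrelation, which the paper obtains by symmetrizing to $\mathbf K=\mathcal K+\mathcal K^*-\mathcal K\mathcal K^*$ and applying an infinite-dimensional Hoffman--Wielandt inequality together with a sub-microscopic regularization of the singularity to gain a spectral gap $\epsilon^2$; and (b) the gluing of smooth/long-range parts onto the local singularity is not a ``soft probabilistic decoupling'' but a loop equation on the group obtained from a Girsanov/Fr\'echet-derivative integration by parts for $(U_t)$ on $\mathfrak u_N$, whose error terms require a multi-time, full-rank resolvent local law. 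Without these, ``a quantitative decay estimate for the correlation kernels'' gives only additive errors that are not small compared to $\E\prod|\det|^\gamma$, and the surgery does not close.
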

The usual parametrization $\gamma \in (0,\sqrt{2d})$ in GMC theory corresponds to $\log$-correlated fields. Here, the field is $\frac{1}{2}\log$-correlated and by a change of parametrization our result covers this entire range (see \eqref{eq:cov-field} below for an exact formula of the covariance of this free field,  and background). \\

In \cite{Web2015}, Webb opened a connection between Gaussian multiplicative chaos and random matrix theory by linking the characteristic polynomial of the Circular Unitary Ensemble (CUE) to a one-dimensional GMC and conjectured that similar results also hold for the Gaussian Unitary Ensemble, one-dimensional $\beta$-ensembles, and more generally for random matrix models presenting log-correlations, including in dimension two.  His proof and the ones of the following works \cite{BerWebWon2017, NikSakWeb20} relied on existing results for Fisher-Hartwig asymptotics based on the Riemann-Hilbert approach (or adaptations thereof). 
Another approach  appeared in \cite{ChaNaj}, still for $d=1$,  which showed that the limit of an object different from the characteristic polynomial,  the spectral measure of circular $\beta$-ensembles, coincides with a Gaussian multiplicative chaos.
In our paper, as an application of our main theorem below, we provide the first  convergence to the $2d$ LQG measure, taking a new angle in viewing this problem as one in random matrix dynamics. \\

By considering the unit disk instead of a semi-infinite cylinder (i.e., replacing $\rd t \rd\theta$ by $e^{-2t} \rd t \rd\theta$ with $z = e^{-t} e^{\ii \theta}$, $t\in (0,\infty)$ in the limit \eqref{eq:LQGconvergence}),  Theorem \ref{eq:LQGconvergence} translates into convergence towards the measure $e^{\gamma \bar{h}(z)} \rd z$ on the unit disk where $\bar{h}$ is the lateral part in the polar decomposition of the 2d whole plane GFF $h$, i.e. $\bar{h}(z) = h(z) - \dashint_{|z| \mb{U}} h$; the subtracted process $r \mapsto  \dashint_{r \mb{U}} h$ being a Brownian motion independent of $\bar{h}$. The field $\bar{h}$ and the associated chaos measure were introduced in the mating of trees \cite{DupMilShe2014}, used in the proof of the DOZZ formula \cite{DOZZ}  and the dynamics of the restriction of $\bar{h}$ on concentric circles played a crucial role in the proof of the conformal bootstrap in Liouville theory \cite{GKRVboostrap}.  The unitary Brownian motion is the most natural model among random matrix dynamics that induce the field $\bar{h}$ and its own dynamics.

\subsection{Multi-time Fisher-Hartwig asymptotics}.\ The main contribution of this paper is the dynamical extension of asymptotics of Toeplitz determinants with singularities.  In the following discussion, the Fourier transform is normalized as $\hat f_k=\int_0^{2\pi} f(e^{\ii\theta})e^{-\ii k\theta}\frac{\rd\theta}{2\pi}$ and we
let 
\[(f,g)_{\rm H}=(f,g)_{{\rm H}^{1/2}}=\sum_{k\in\mathbb{Z}} |k|\,\hat f_k\hat g_{-k}.\]
The Toeplitz determinant  $D_N(f)=\det(\hat f_{j-k})_{j,k=0}^{N-1}$ has been the subject
of many investigations. For example,  a simple version of the strong Szeg{\H o} theorem states that if $f=e^V$ with $V$ real-valued and smooth enough, $D_N(f)\sim \exp(N\hat V_0+\frac{1}{2}\|V\|^2_{{\rm H}})$ for large dimension.

For a wide class of irregular functions $f$, Fisher and Hartwig \cite{FisHar} made a seminal general conjecture about the asymptotic form of $D_N(f)$, which has been corrected by Basor and Tracy \cite{BasTra} and is settled in full generality \cite{DeiItsKra2011} by Riemann-Hilbert methods, after multiple important contributions, e.g.\cite{Bas,Wid,Ehr}.
For example, in the special case where $f(z)=e^{V(z)}\prod_{j=1}^m|z-z_j|^{2\alpha_j}$ with $m\geq 1$ fixed singularities $z_j$ on the unit circle, $\alpha_j>-1/2$,  and smooth centered real $V$,
the Fisher-Hartwig asymptotics  states that
\begin{equation}\label{eqn:Widom}
D_N(f)=e^{\frac{1}{2}\|V\|^2_{{\rm H}}-\sum_{j=1}^m\alpha_jV(z_j)}\, N^{\sum_{j=1}^m \alpha_j^2}
\prod_{1\leq j<k\leq m}|z_j-z_k|^{-2\alpha_j\alpha_k}\prod_{j=1}^m\frac{{\rm G}(1+\alpha_j)^2}{{\rm G}(1+2\alpha_j)}\ (1+\oo(1)),
\end{equation}
where the Barnes function ${\rm G}$ is defined in Subsection \ref{subsec:notations}.
Motivations in statistical physics for general Fisher-Hartwig asymptotics are multiple, see in particular the beautiful exposition of applications to the phase transition of the 2d Ising model  in \cite{DeiItsKraII}.

Such Toeplitz determinant asymptotics are related to random matrix theory as they correspond to moments of characteristic polynomials of random matrices. For example, the Heine formula implies that the left-hand side of (\ref{eqn:Widom}) coincides with
$\E[\prod_{j=1}^m|P_N(z_j)|^{2\alpha_j}e^{{\rm Tr}V(U)}]$, where $P_N(z)=\det(z-U)$ and $U$ is a $N\times N$ Haar-distributed unitary matrix. 
The main contribution of our paper is the first Fisher-Hartwig asymptotics for singularities in space and time. More precisely,  Theorem \ref{thm:FH} below is a multi-time extension of (\ref{eqn:Widom}),  a formula due to Harold Widom in 1973.\\

To state this main result,  we first denote $\mathscr{A}$ (resp. $\mathscr{B}$) a finite subset of $\{z=t+\ii \theta,t\in\mathbb{R},0\leq \theta \leq 2\pi\}$ (resp.  $\mathbb{R}$), with fixed cardinality but  possibly $N$-dependent points.
The functions $f_s$ in the statement below are of regularity $\mathscr{C}^{3}$ on an arbitrary mesoscopic scale $N^{-1+\delta}$, $\delta\in(0,1]$.  We also remind the definition of the Poisson kernel ${\rm P}_t$ in (\ref{eqn:PoissonK}).

\begin{theorem}\label{thm:FH}
Let $(U_t)$ be a unitary Brownian motion at equilibrium,  as defined in (\ref{eqn:Unitbis}). 
Let $0<\delta\leq 1$, $C$ be fixed constants.  There exists $\varepsilon>0$ such that
uniformly in $\max_{\mathscr{B}}|s|+\max_{\mathscr{A}}|z|\leq C$,  $\min_{(z,z')\in\mathscr{A}^2,z\neq z'}|e^z-e^{z'}|>N^{-1+\delta}$, 
$\gamma_z\in[0,C]$,  $f_s\in\mathscr{S}_{\delta,C}$ (see Definition \ref{subsec:fctSpaces}), we have
\begin{multline}
\label{eq:asymptotics}
\E\Big[e^{\sum_{s \in \mathscr{B}} \Tr f_s(U_s)  } \prod_{z = t + \ii \theta\in\mathscr{A}} | \det (U_t - e^{\ii \theta}) |^{\gamma_z} \Big]= e^{N \sum_{\mathscr{B}}  \dashint f_s + \frac{1}{2} \sum_{\mathscr{B}^2} (f_s, {\rm P}_{|s-s'|} f_{s'})_{\rm H} - \sum_{z \in \mathscr{A}, s\in\mathscr{B}} \frac{\gamma_{z}}{2} ( {\rm P}_{|t-s|} - {\rm P}_{\infty}) f_s (e^{\ii \theta})} \\
\times \prod_{\mathscr{A}} N^{ \frac{\gamma_z^2}{4}} \frac{{\rm G}(1+\frac{\gamma_z}{2})^2}{{\rm G}(1+\gamma_z)} \prod_{z,w \in \mathscr{A}, z \neq w} \left( \frac{\max(|e^z|,|e^w|)}{|e^z-e^w|} \right)^{\frac{1}{4} \gamma_z \gamma_w} (1+{\rm O}( N^{-\eps}))
\end{multline}
where the multiplicative constant in $\OO$ depends on $|\mathscr{A}|$,  $|\mathscr{B}|$.
\end{theorem}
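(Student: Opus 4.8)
\emph{Overall strategy.} The plan is to prove \eqref{eq:asymptotics} without Riemann--Hilbert or Toeplitz input, by combining stochastic analysis on $U(N)$ with determinantal point process estimates, organized by a surgery argument, in three stages: remove the smooth statistics, do a single time, then bootstrap to several times. The backbone is that every statistic in \eqref{eq:asymptotics} --- the smooth $\Tr f_s(U_s)$ and the rough $\log|\det(U_t-\me^{\ii\theta})|$ alike --- is a linear functional of the space-time ``moment process'' $(\Tr U_s^k)_{k\geq1}$, which under the equilibrium unitary Brownian motion converges, as $N\to\infty$, to a complex Gaussian field: for fixed $k$, $\Tr U_s^k$ converges to a complex Ornstein--Uhlenbeck process (Spohn), with $\E[\Tr U_s^k\overline{\Tr U_{s'}^k}]\to |k|\,\me^{-|k||s-s'|}$, whose Fourier multiplier $\me^{-|k||s-s'|}$ is exactly that of the Poisson kernel $\PP_{|s-s'|}$. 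One then expects $\me^{\sum_s\Tr f_s(U_s)}$ to contribute, after centering and taking the Gaussian expectation, precisely $\exp(N\sum_{\mathscr B}\dashint f_s+\tfrac12\sum_{\mathscr B^2}(f_s,\PP_{|s-s'|}f_{s'})_{\rm H})$, while the cross-covariance of a smooth statistic at time $s$ with $\gamma_z\log|\det(U_t-\me^{\ii\theta})|$ produces $\exp(-\sum_{z\in\mathscr A,s\in\mathscr B}\tfrac{\gamma_z}{2}(\PP_{|t-s|}-\PP_\infty)f_s(\me^{\ii\theta}))$ (the subtraction of $\PP_\infty=\dashint$ isolating the zero mode, a Brownian part handled separately). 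Rigorously I would realize this via a Girsanov/Cameron--Martin change of measure on $U(N)$ driven by a deterministic drift built from the $f_s$, with Itô's formula identifying the Radon--Nikodym prefactor, thereby reducing to the pure-singularity case $\mathscr B=\emptyset$ under a unitary Brownian motion with a small smooth deterministic drift to which the singular factors are insensitive at order $N^{-\eps}$.

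\emph{One time.} For $\mathscr A$ supported at a single time $t$, the surgery splits each $\log|\det(U_t-\me^{\ii\theta_z})|=\sum_j\log|\me^{\ii\varphi_j(t)}-\me^{\ii\theta_z}|$ into a low-mode (macroscopic) part built from eigenangles outside $\bigcup_z B(\me^{\ii\theta_z},\eta)$, $\eta=N^{-1+\delta'}$ with $\delta'<\delta$, and a high-mode (microscopic) part inside. The macroscopic part is a linear statistic of controlled regularity whose joint exponential moments are, to leading order, Gaussian with the covariance above, yielding the truncated self-energy $N^{\gamma_z^2(1-\delta')/4}$, part of the Barnes constant, and (since distinct $z,w$ are mesoscopically separated) the factor $|\me^{\ii\theta_z}-\me^{\ii\theta_w}|^{-\gamma_z\gamma_w/2}$; for this one invokes the CUE linear-statistics central limit theorem, itself accessible through the stochastic calculus of the first stage applied to a mollified $\log|\cdot-\me^{\ii\theta_z}|$. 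The microscopic part is the hard part: after rescaling $B(\me^{\ii\theta_z},\eta)$ by $N$, the CUE eigenangle process converges to (indeed coincides with, before the limit) the exact $\mathrm{Sine}_2$ determinantal process, and the rescaled exponential moment converges to the associated $\mathrm{Sine}_2$ functional, whose value is the sine-kernel Fisher--Hartwig constant $\tfrac{{\rm G}(1+\gamma_z/2)^2}{{\rm G}(1+\gamma_z)}$ together with the residual power $N^{\gamma_z^2\delta'/4}$ --- identified, e.g., by matching the known power of $N$ with the $\mathrm{Sine}_2$ Laplace transform. Multiplying the two reproduces Widom's formula \eqref{eqn:Widom} probabilistically, recovering Webb's $1d$ theorem when $|\mathscr A|=1$.

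\emph{Several times.} To treat general $\mathscr A$, I would iterate the surgery over the finitely many times in $\mathscr A\cup\mathscr B$, using the space-time Markov property of $(U_s)$ --- conditionally on the configuration at a cut set of times, trajectory pieces are independent --- together with the extended (space-time) determinantal structure of the eigenangle process. For a singularity $z=t+\ii\theta$, its microscopic contribution, conditioned on the cut set, is up to $N^{-\eps}$ an independent $\mathrm{Sine}_2$ functional at time $t$: independence across times holds because singularities at distinct times lie at cylinder distance $>N^{-1+\delta}$ while the extended sine kernel decorrelates on that scale. Hence the powers $N^{\gamma_z^2/4}$ and the Barnes constants simply multiply over $\mathscr A$, while the macroscopic parts of all statistics form a jointly Gaussian field across space-time whose covariance is propagated by the heat semigroup, i.e.\ by $\PP_{|t-t'|}$; a direct computation of the resulting cylinder-GFF covariance, $\E[h(z)h(w)]=\tfrac12\log\big(\max(|\me^z|,|\me^w|)/|\me^z-\me^w|\big)$ modulo the zero mode (equivalently $\E[\bar h(z)\bar h(w)]$ after the polar change of variables $\dd t\,\dd\theta\leftrightarrow\me^{-2t}\dd t\,\dd\theta$), produces exactly the factor $\big(\max(|\me^z|,|\me^w|)/|\me^z-\me^w|\big)^{\gamma_z\gamma_w/4}$ over ordered pairs. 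Accumulating the $1+\OO(N^{-\eps})$ errors over the finitely many surgery steps and tracking uniformity in positions subject to $|\me^z-\me^{z'}|>N^{-1+\delta}$, in $|s|+|z|\leq C$, $\gamma_z\in[0,C]$, and $f_s\in\mathscr S_{\delta,C}$ --- where the $\mathscr C^3$/mesoscopic regularity makes the macroscopic estimates quantitative --- yields \eqref{eq:asymptotics}.

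\emph{Main obstacle.} The principal difficulty is the genuinely multi-time interaction of the singular parts: at a single time the problem carries exact integrable structure (the CUE sine kernel, or a Selberg-type integral), but across times no closed form survives, so one must establish two quantitative facts simultaneously and uniformly --- that the microscopic contributions at different times decorrelate fast enough to factorize into independent $\mathrm{Sine}_2$ functionals, and that the residual macroscopic correlations are precisely Gaussian with the cylinder-GFF covariance. Making both effective, with errors $N^{-\eps}$ uniform over $N$-dependent, possibly mesoscopically close configurations, is where the surgery/conditioning scheme must be combined delicately with the stochastic calculus and the determinantal estimates; this is the heart of the argument.
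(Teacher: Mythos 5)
Your proposal captures the overall surgery skeleton of the paper's argument (separate the smooth statistics via a Girsanov/Cameron--Martin change of measure, isolate a localized singularity, anchor the one-singularity asymptotics on a known closed form, decouple across distinct singularities, and glue the macroscopic pieces back via the Gaussian covariance of the limiting field), and it correctly identifies the cylinder free-field covariance and the role of ${\rm P}_{|t-s|}$. But there is a genuine gap at precisely the step you flag as the ``main obstacle'': the multiplicative factorization of the microscopic contributions across distinct space-time singularities. You write that ``independence across times holds because singularities at distinct times lie at cylinder distance $>N^{-1+\delta}$ while the extended sine kernel decorrelates on that scale.'' Pointwise kernel decay does not by itself give what is needed. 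The standard route from kernel decay to factorization of Fredholm determinants (e.g.\ $|\det(I-A)-\det(I-B)|\leq\|A-B\|\,e^{1+\|A\|+\|B\|}$ or its $2$-regularized analogues) produces an \emph{additive} error. Since $\E[\prod_i h_j(z_i(t_j))]$ for each localized singularity grows like a power of $N$ (indeed like $N^{\gamma_z^2/4}$ up to the truncation), an additive error swamps the target $1+\OO(N^{-\eps})$ \emph{multiplicative} factorization. The paper's resolution --- an infinite-dimensional Hoffman--Wielandt inequality applied to the symmetrized operator $\mathbf K = \mathcal K + \mathcal K^* - \mathcal K\mathcal K^*$, preceded by a sub-microscopic smoothing that pushes the spectrum of the Fredholm operators away from $1$ so the Taylor expansion of $\log\det$ converges with control --- is a new mechanism and is not implied by the extended-kernel decay you invoke.

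A secondary but still substantive divergence: the paper never passes to the microscopic $\mathrm{Sine}_2$ limit. The one-singularity constant is obtained from the Keating--Snaith/Selberg exact formula at finite $N$ (Lemma~\ref{lem:one-sing}), and the ``gluing'' of smooth statistics to a localized singularity is done at finite $N$ through the loop equations (Girsanov on $\mathfrak u_N$ with explicit Fréchet derivatives of $U_t$, Proposition~\ref{prop:Girsanov}), controlled by an averaged multi-time full-rank local law (Proposition~\ref{prop:avIso}). Your proposed route through convergence to the $\mathrm{Sine}_2$ Laplace functional and ``matching the known power of $N$'' would either have to independently evaluate that functional (essentially re-proving the one-point Fisher--Hartwig constant) or import the very Toeplitz input the paper sets out to avoid, and the matching of the residual $N^{\gamma_z^2\delta'/4}$ to a limit functional is not on firm ground since the limit object carries no $N$-dependence. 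So the proposal's one-time step is circular or under-specified, and its multi-time step is missing the operator-theoretic idea that makes the decoupling quantitative.
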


When there is no singularity ($\mathscr{A}=\varnothing$),  this formula is a dynamical generalization of the 
strong Szeg{\H o} theorem.  It can also be thought of as 
an upgrade to any mesoscopic scale and to exponential generating functions of Spohn's convergence of the Dyson Brownian motion dynamics to the free field (see section \ref{subsec:chardyn}).

However the main originality and applications of Theorem \ref{thm:FH} are due to the logarithmic insertions, see for example Remarks \ref{rem:logcor} and \ref{rem:max} on straightforward corollaries on logarithmically correlated fields, their maximum and optimal eigenvalues deviations along Dyson Brownian motion.
Based on (\ref{eq:asymptotics})  it is also not hard to obtain
that for any smooth space-time curve $\mathscr{C}$ in $(e^{\ii\theta},t)$ with Lebesgue measure $\lambda_{\mathscr{C}}$,
$| \det (U_t - e^{\ii \theta})|^\gamma \rd\lambda_{\mathscr{C}}$ converges up to normalization to a one dimensional Gaussian multiplicative chaos  in the ${\rm L}^1$ phase (i.e. $\gamma<2$ for $d=1$).  In particular this recovers the fixed time  results from \cite{Web2015,NikSakWeb20}.

The proof of Theorem \ref{thm:FH}  applies to other singularities: the discontinuities from $\im \log$. 
We only stated our results for  $\re\log$ for the sake of conciseness,  but one can easily state a consequence of the  discontinuous case ${\rm Im}\log$.
Indeed,  define $\im\log\det(1-e^{-\ii\theta}U_t)=\sum_k\im\log (1-e^{\ii(\theta_k(t)-\theta)})$, 
with the branch choice $\im \log (1-e^{\ii\varphi})=(\varphi-\pi)/2$ if $\varphi\in[0,\pi)$,  $(\varphi+\pi)/2$ if $\varphi\in(-\pi,0)$.  As
$\im\log\det(1-e^{-\ii\theta}U_t)-\im\log\det(1-U_t)=\pi({\rm N}_t(0,\theta)-\E {\rm N}_t(0,\theta))$,  where 
${\rm N}_t(0,\theta)=\big|\{\theta_k(t)\in(0,\theta]\}\big|$,
we have
\begin{equation}
\label{eq:LQGconvergence2}
\lim_{N \to \infty } Z_{N,\gamma}^{-1}\,  e^{\gamma\, \pi({\rm N}_t(0,\theta)-\E {\rm N}_t(0,\theta))}  \rd t \rd\theta = e^{\gamma h(z)} \rd z
\end{equation}
for every $\gamma \in (0,2\sqrt{2})$ and some constants $Z_{N,\gamma}$.  The above result relies on an analogue of Theorem \ref{thm:FH},  see Equation
(\ref{eq:asymptotics_withjump}) in the Appendix,  which also gives the necessary, straightforward changes for a proof.

Although an extension of Theorem \ref{thm:FH}  to include $\im\log$ and complex-valued $f_s$ is straightforward,  a generalization to 
complex-valued $\gamma_z$ is not.  In the static case,  the most general version of Fisher-Hartwig asymptotics \cite{DeiItsKra2011} allows general complex exponents,  with asymptotics involving a subtle variational problem.  It is not even clear how to formulate a related conjecture in our multi-time setting.\\

More generally,  moments of characteristic polynomials of wide classes of random matrices have been a topic of major interest, see e.g.
 \cite{BaiKea, BorStr,BumGam,FyoSup} to name a few in the case of integer exponents by algebraic and supersymmetric  methods, and \cite{BerWebWon2017,Fah2019,ChaFahWebWon,ClaFahLamWeb, WebWon} for fractional exponents by Riemann-Hilbert methods.  Theorem \ref{thm:FH} initiates joint (fractional) moments for correlated random matrices, a topic connected to the quenched complexity of high dimensional landscapes \cite{AufBenCer,Fyo2004}.

Our paper considers random matrices from the canonical setting, the unitary group, but we expect the convergence to LQG will remain in other settings (and the proof method through surgery as described below will apply,  although major technical obstacles remain). Such settings include dynamics on other Lie groups,  out of equilibrium or with a Dyson Brownian motion at arbitrary temperature.
In fact,  the upcoming work \cite{BouDubHarKel2025} on a non-Hermitian analogue of Fisher-Hartwig asymptotics will 
follow a scheme similar to the surgery that we now explain.

\subsection{Outline}.\ \label{outline}
To prove our main result, we develop a general surgery argument that allows us to go beyond the usual free field limit and which works very roughly speaking as follows: 1) we ``cut" the long range non-singular part of the determinants  in \eqref{eq:asymptotics} and prove a (space-time) decoupling of the resulting product of localized singularities 2) we carry out a general ``gluing operation" for non-singular terms 3) we evaluate asymptotics of \textit{one} localized singularity by gluing the opposite of the associated long range non-singular part to the determinant itself, together with the Selberg integral formula 4) with these in hands, it remains to glue back the non-singular parts and the additional smooth functions to the localized singularities.\\

{\noindent \it Decoupling.} The first ingredient consists in a space-time decoupling of the truncated singularities. Usual techniques to prove decorrelation for linear statistics or extrema of eigenvalues do not seem to work for the product of local singularities,  either because our functions are not in ${\rm H}^{1/2}$ or because such decouplings give additive error terms.  We find a new general multiplicative decorrelation of local linear statistics which can apply to a large class of determinantal point processes. We prove in Section \ref{subsec:kernel}, by using the Eynard-Mehta machinery,  that the process of the eigenvalues at different times is a determinantal point process. Despite the simplicity of the expression of the kernel we find, it seems that this stationary case has not been derived before (nor with arbitrary initial condition), because there is no canonical ordering as the particles are winding around the circle. As a second step, to work out the decoupling, the starting point of our proof is an infinite dimensional version of the Hoffman-Wielandt inequality, applied to a related self-adjoint operator, from which we then extract the sought decoupling of our observable. This is the content of Section \ref{subsec:decoupling}. \\

{\noindent \it Matrix dynamics.}
Our``gluing" operation starts with the usual method (initiated in random matrix theory in \cite{Joh1998}) of Hamiltonian perturbation  and then we  a) perform an integration by parts, b) obtain asymptotics.

As explained at the beginning of Section \ref{sec:loop}, due to our combined multitime and singular settings,
 step a) requires an original approach: 
the integration by parts formula from Proposition \ref{prop:Girsanov} encodes information about eigenvalues but also eigenvectors, while loop equations traditionally correspond to hierarchies only for particles/eigenvalues.
For the proof of Proposition \ref{prop:Girsanov}, we use the Girsanov theorem on the Lie algebra $\mathfrak{u}_N$ of the unitary group (the unitary Brownian motion $(U)$ is the solution of a matrix SDE driven by a Brownian motion $(B)$ on  $\mathfrak{u}_N$). This entails characterizing the Fr\'echet derivatives of the UBM, $D_F U_t := \lim_{\eps \to 0} \eps^{-1} ( U(B+\eps F)_t - U(B)_t )$ (shifting $B$ in a progressively measurable direction $(F) = \int_0^{\cdot} f_s ds$),  as solutions of matrix SDEs, and solving explicitly these. We exploit the stationarity of the process to consider long times so that observables of the UBM are well encoded by the noise driving the process and in particular by its associated integration by parts formula. 

 To control the error terms from step a),  we prove an averaged (over projections) and multi-time local law (Proposition \ref{prop:avIso}, the main result of Section 4), which is new including in the context of Hermitian random matrices. 
Moreover,  to control submiscroscopic errors due to logarithmic singularities,   in the key Lemma \ref{lem:loopEqn}, we impose an algebraic cancellation property (\ref{eqn:assumed}),  which holds if we add {\it compensator} functions to our observables of interest (see Section \ref{subsec:proofmain}).  These compensators are local functions,  so they can be included from the start in the decoupling step,  completing the outline of our surgery argument.\\

In Section \ref{sec:proofmain}, by applying the general surgery introduced above, we prove Theorem \ref{thm:FH} first, and then use it for our main application,  i.e.  the convergence to the Liouville quantum gravity measure.

\subsection*{Acknowledgements}. We wish to thank Jiaoyang Huang and Ofer Zeitouni for their useful feedback on the first version of the paper, and Xin Sun for suggesting the extension of our result to some other surfaces.  
We are especially grateful to Ahmet Keles for his many suggestions which helped improve this work.
P.B. was supported by the NSF grant DMS 2054851 and a Simons fellowship.

\section{Preliminaries}

\subsection*{Basic notations}.
\label{subsec:notations}
In this paper, $\rd\lambda$ denotes the Lebesgue measure on the unit circle $\mb{U}$, and $\rd m$ the Lebesgue measure on $\mathbb{C}$.  We remind that the 
Fourier coefficients of $f$ are defined as $\hat{f}_k = \frac{1}{2\pi} \int_0^{2\pi} e^{-\ii k \theta} f(\theta) d \theta$.
The Poisson kernel plays an important role and is normalized as follows:
\begin{equation}\label{eqn:PoissonK}
{\rm P}_t f(z)=\int_0^{2\pi}f(e^{\ii\theta}){\rm Re}\frac{1+ze^{-\ii\theta-t}}{1-ze^{-\ii\theta-t}}\frac{\rd\theta}{2\pi}
=
\sum_{k\in\mathbb{Z}}\hat f_k e^{-|k|t}z^k.
\end{equation}
Its restriction to $\mb{U}$ is given by ${\rm P}_t f(e^{\ii \theta}) = \sum_{k} \hat{f}_k e^{-|k| t} e^{\ii k \theta}$.

The Barnes G-function is defined as the Weierstrass product
$$
{\rm G}(z+1)=(2\pi)^{z/2}e^{-\frac{z+z^2(1+\gamma)}{2}}\prod_{k=1}^\infty \left(1+\frac{z}{k}\right)^ke^{\frac{z^2}{2k}-z}.
$$
Here, and only here, $\gamma$ is the Euler constant.
The Barnes function satisfies the functional equation ${\rm G} (z+1)=\Gamma(z) {\rm G}(z)$ where $\Gamma$ is the Gamma function.

Moreover,  for a matrix $A$, $\Tr(A) = \sum_i A_{i,i}$ and we denote by $A^T$ the transpose of $A$. $A^* = \overline{A^T}$. If $M, N$ are two complex valued matrices, $\langle M , N \rangle = \Tr ( \overline{M}^T N )$  and  $\langle M, N \rangle_{\mf{R}} = \re  \langle M, N \rangle$. 

Finally,   the statement of Theorem \ref{thm:FH}  makes use the following functional space $\mathscr{S}_{\delta,C}$ described below.

\begin{definition}\label{subsec:fctSpaces}
For $0<\kappa\leq 1$
and $k\in\mathbb{N}$ we introduce the norm on $\{f:\mathbb{U}\to\mathbb{R}\}$
$$
\|f\|_{\infty,k,\kappa}=\sum_{j=0}^k N^{j(\kappa-1)}\|f^{(j)}\|_\infty.
$$
We define ${\rm A}_{\kappa,C}$ as the set of functions $g:\mathbb{U}\to\mathbb{R}$ supported on an arc of length  $N^{-1+\kappa}$ and smooth on that scale in the sense that 
$\|g\|_{\infty,3,\kappa}\leq C$. 
For $0<\delta\leq 1$, let $\mathscr{S}_{\delta,C}$ be the set of functions $f:\mathbb{U}\to\mathbb{R}$ which can be written as
$$
f=\sum_{i=1}^m f_i,\ m\leq C\log N,  f_i\in {\rm A}_{\kappa,C}\ (\kappa\in[\delta,1]).$$
\end{definition}
Note that for any $g\in {\rm A}_{\kappa,C}$ we have $\hat{g}_k\leq C\min(N^{-1+\kappa},\frac{N^{1-\kappa}}{k^2})$, so $\|g\|_{\rm H}^2\leq C$. Thus, $f\in\mathscr{S}_{\delta,C}$ implies $\|f\|_{\rm H}=\OO(\log N)$.

Two examples of particular interest are as follows. First,  functions of type $f(e^{\ii\theta})=g(N^{1-\kappa}(\theta-\varphi))$ with $g$ compactly supported and $\mathscr{C}^3$ are in ${\rm A}_{\kappa,C}\subset \mathscr{S}_{\delta,C}$ for any $C>0$ and $\kappa\in[\delta,1]$.  Second,  any regularization of the function $f(e^{\ii\theta})=\log|e^{\ii\theta}-e^{\ii\varphi}|$ on scale $N^{-1+\delta}$ is in $\mathscr{S}_{\delta,C}$ for fixed, large enough $C$ (e.g.  $\theta\mapsto N^{1-\delta}\,\int f(e^{\ii\psi})\chi(N^{1-\delta}(\theta-\psi))\rd\psi$ with $\chi\geq 0$ smooth, compactly supported, $\int\chi=1$).

\subsection{Unitary Brownian motion}.  With its most common normalization,  the Brownian motion on the unitary group $U(N)$ satisfies the following stochastic differential equation (SDE)
\begin{equation}
\label{eq:UBM}
\rd \tilde{U}_t = \tilde{U}_t \rd B_t - \frac{1}{2} \tilde{U}_t \rd t
\end{equation}
where $dB_t$ is a Brownian motion on the space of skew Hermitian matrices. We consider an orthogonal basis of skew Hermitian matrices for $\langle \cdot, \cdot \rangle_{\mf{R}} $ given by matrices of the form 
$
\frac{1}{\sqrt{2N}} (E_{k,\ell}-E_{\ell,k}),  \frac{\ii}{\sqrt{2N}} (E_{k,\ell}+E_{\ell,k}),\frac{\ii}{ \sqrt{N}} E_{k,k}.
$  
Here, $E_{k, \ell}$ is the matrix whose $k,\ell$ entry is $1$ and other entries are $0$. Note that this is an orthonormal basis for $N \langle \cdot, \cdot \rangle_{\mf{R}} $. We write this basis $\{X_1, \dots, X_{N^2} \rbrace$. The Brownian motion $(B_t)$ can be realized as
\begin{equation}
\label{def:shbm}
B_t = \sum_k X_k \tilde{B}_t^k
\end{equation}
where the $(\tilde{B}^k)$'s are independent standard Brownian motions. 
It goes back to Dyson \cite{Dyson62} that the eigenvalues $\tilde z_k$ of $(\tilde{U}_t)$ satisfy
\begin{equation}
\label{eq:Dyson-2}
\rd \tilde z_k = \frac{1}{\sqrt{N}} \ii \tilde z_k \rd B_k - \frac{1}{N} \sum_{j \neq k} \frac{\tilde z_k \tilde z_j}{\tilde z_k - \tilde z_j} \rd t - \frac{1}{2} \tilde z_k \rd t.
\end{equation}

In this paper,  it will be more natural to consider a small time change in the unitary Brownian motion: the normalization
\begin{equation}
\label{eq:UBM-conv}
U_t := \tilde{U}_{2t},
\end{equation}
in other words  the dynamics
\begin{equation}\label{eqn:Unitbis}
\rd U_t = \sqrt{2}U_t \rd B_t -  U_t \rd t,
\end{equation}
will provide convergence to the free field on the cylinder with its canonical,  locally isotropic, covariance function $\E(h(z)h(w))= \pi (-\Delta_{\mc{C}})^{-1}(z,w)$,  as in Theorem \ref{thm:gmc-cv}. Moreover,  (\ref{eqn:Unitbis})
 corresponds to the normalization  in \cite{Spo1998},  the first result on convergence of dynamics of random matrix type to the free field, as explained in Subsection \ref{subsec:chardyn}.
Indeed Spohn considers the $\beta$-Dyson Brownian motion on the unit circle,  i.e.  the time evolution of $N$ particles on the unit circle  $\{e^{\ii \theta_1(t)}, \dots, e^{\ii \theta_N(t)} \}$ satisfying
\begin{equation}
\label{eqn:Dyson}
\rd \theta_j = \frac{\beta}{2N} \sum_{i \neq j} \cot \left(  \frac{\theta_j-\theta_i}{2}\right)  \rd t + \sqrt{\frac{2}{N}} \rd B_j(t)
\end{equation}
where the $(B_j)$'s are independent standard Brownian motions.  For the unitary Brownian motions strong solutions exist as \cite[Theorem 3.1]{CepLep2001} proves more generally that for $\beta \geq 1$, the particles almost surely do not collide but almost surely do when $\beta \in (0,1)$. 
With $z_k = e^{\ii \theta_k}$, the  dynamics  \eqref{eqn:Dyson} reads
\begin{equation}
\label{eqn:dynz}
\rd z_k=\ii z_k \sqrt{\frac{2}{N}}\rd B_k  - \frac{\beta}{N} \sum_{ j \neq k} \frac{z_k z_j }{z_k - z_j} \rd t + \frac{z_k}{N} (\frac{\beta}{2} - 1) \rd t - \frac{\beta}{2} z_k \rd t.
\end{equation}
By comparing (\ref{eq:Dyson-2}) and (\ref{eqn:dynz}),  the dynamics of the eigenvalues of the unitary Brownian motion as normalized in (\ref{eq:UBM-conv})
coincide with the $\beta$-Dyson Brownian motion from \cite{Spo1998} when $\beta=2$.

Finally,  we will use the It\^o formula for the considered dynamics (\ref{eqn:Unitbis}):
\begin{equation}
\label{eq:Ito-UBM}
\rd f(U_t) =\sqrt{2} \sum_k \mc{L}_{X_k} f(U_t) \rd \tilde{B}_t^k +  \Delta_{U(N)} f(U_t) \rd t,
\end{equation}
where $\mc{L}_X f(U) = \frac{d}{dt}_{|t=0} f( U e^{t X}) $ and $\Delta_{U(N)} f (U) = \sum_k \frac{d^2}{dt^2}_{|t=0} f(U e^{t X_k} )$ is the Laplacian on $U(N)$.

\subsection{The characteristic polynomial process and the free field}. \ \label{subsec:chardyn}
In the paragraphs below, starting from a formal application of Spohn's result \cite{Spo1998}, we explain how the large dimension limit of the logarithm of the characteristic polynomial process is naturally related with dynamics associated with the GFF. These explanations are not necessary for proving our theorems, but they shed some lights on the structure of the main objects we consider. We also use this as an opportunity to set some notations and record covariance identities that we will use, in particular when stating the convergence to the chaos measures $e^{\gamma h}$ and $e^{\gamma \bar{h}}$.

\smallskip

\noindent {\bf Characteristic polynomial process induced by the Dyson dynamics.} Given the dynamics (\ref{eqn:Dyson}),
Spohn \cite{Spo1998} considered the stochastic process (indexed by functions $f$) given by
$$
\xi_N(f,t) := \sum_{j=1}^N f(\theta_j(t)).
$$
As $\E \xi_N(f,t) = N \hat{f}_0 = N \dashint f$, it is natural to restrict to functions $f$ with zero mean and Spohn proved that the limiting dynamics are given, with $\Delta_{\mb{U}} = (\partial/\partial \theta)^2$,  by
\begin{equation}
\label{eq:spohn-dynamics}
\rd \xi (f,t) = \xi ( -(\beta/2) \sqrt{-  \Delta_{\mb{U}}} f ,t )\rd t+ \rd \mc{W}(f',t),
\end{equation}
where $d \mc{W}(f,t)$ is a Gaussian noise characterized by $\E( d\mc{W}(f,t) d\mc{W}(g,s)) = 2\delta(t-s) dt ds \frac{1}{2\pi} \int_0^{2\pi} f(x) g(x) dx$.

Now, we discuss the characteristic polynomial process induced by these dynamics, namely 
\begin{equation}
h_N(t,x) := \xi_N(f_x,t),
\end{equation}
where $f_x(\theta) := \log | e^{\ii \theta} - e^{\ii x} | =  - \re  \sum_{k\geq 1} \frac{1}{k} e^{\ii k \theta} e^{-ik x} = - \sum_{k \geq 1} \frac{1}{k} \cos(k(\theta-x))$. This field has zero mean in the sense that for every $N,t$, $\int_{\mb{U}} h_N(t,\cdot) = 0$. We formally take $f = f_x$ in \eqref{eq:spohn-dynamics} and look for the induced dynamics. Note first that $\sqrt{-(\partial/\partial \theta)^2} f_x(\theta) = \sqrt{-(\partial/\partial x)^2} f_x(\theta)$ so the drift is given by $- \frac{\beta}{2} (- \Delta_{\mb{U}})^{1/2}$. Concerning the noise part,  it is clearly white in time,  and when $t=s$ an elementary calculation gives
$$
\E (\mc{W}(f_x',t) \mc{W}(f_y',t) ) = 2 \frac{1}{2\pi} \int_0^{2\pi} f_x'(\theta) f_y'(\theta) d\theta  =\pi \delta(x-y).
$$
With $W$  an $L^2(\lambda)$ space-time white noise with zero mean (see below \eqref{eq:dynamics-gff-circles} for a representation with Brownian motions),  it is natural to expect from Spohn's result that
\begin{equation}
\label{eq:dyson-free-field}
\rd h_t = - \frac{\beta}{2} (- \Delta_{\mb{U}})^{1/2}  h_t \rd t+ \sqrt{\pi} W(\rd x,\rd t).
\end{equation}
Note also that $\int_{\mb{U}} h_t(x) \rd x = 0$ for every $t \in \mb{R}$ since $h_t(x) = \lim_N \sum \log | e^{\ii\theta_k^N(t)} - e^{\ii x}|$. 

\medskip

\noindent {\bf Dynamics of the averaged trace of the 2$d$ GFF on Euclidean circles.} We consider here the trace of the whole-plane GFF on Euclidean circles and explain in which sense the dynamics \eqref{eq:dyson-free-field} are related to it. The whole-plane GFF can be seen as a $\sigma$-finite measure (with Lebesgue measure on the zero mode) or as a random field modulo constant. Recalling that in the context of characteristic  polynomials $\int_{\mb{U}} h_N(t,\cdot) = 0$, we are here therefore only interested in $h_t = \Phi(e^{-t} \cdot ) - \dashint \Phi(e^{-t} \cdot )$, where $\Phi$ is a whole plane GFF, and this doesn't depend on the zero mode of the free field (so, for instance one can take $\Phi$ to have zero mean on $\mb{U}$ for which the covariance is given in \cite[Section 2.1.1]{Var2017}, for more on the GFF, see \cite{She2007, Dub2009}). From the $\log$-covariance of the whole-plane GFF, one has (see, e.g., \cite[Section 3] {DOZZ}),
\begin{equation}
\label{eq:cov-trace-field}
\E( h_s(e^{\ii x}) h_t(e^{\ii y}))) = \log \frac{\max(|e^{-s}|,|e^{-t}|)}{|e^{-s} e^{\ii x} - e^{-t} e^{\ii y} | }.
\end{equation}
In particular, $\E ( h_0(e^{\ii x}) h_0(e^{\ii y})) = - \log | e^{\ii x} - e^{\ii y} |$ and $h_0$ can be realized as $h_0 = \sum_k A_k(0)  \cos( k \cdot) + B_k(0) \sin( k \cdot)$ where $(A_k(0))$ and $(B_k(0))$ are independent Gaussian variables, with $A_k(0) \sim B_k(0) \sim \mc{N}(0, \frac{1}{k})$.  ${\rm H} = {\rm H}^{1/2}$ is exactly the Cameron-Martin space of $h_0$.

The Gaussian field given by \eqref{eq:cov-trace-field} has the same distribution as the one given by the following dynamics
\begin{equation}
\label{eq:dynamics-gff-circles}
\rd h_t = - (- \Delta_{\mb{U}})^{1/2} h_t\rd t + \sqrt{2 \pi} W(\rd t,\rd w),
\end{equation}
where $W$ is an $L^2( \lambda)$ space-time white noise on the unit circle and $h_0$ has the distribution of a centered Gaussian field with covariance given by $\E ( h_0(e^{\ii x}) h_0(e^{\ii y})) = - \log | e^{\ii x} - e^{\ii y} |$. The space-time white noise $W(\rd t, \rd w)$ can be realized as $\sum_{k \geq 1} \frac{\cos( k \cdot)}{\sqrt{\pi}} \rd V_k(t) + \frac{\sin( k \cdot)}{\sqrt{\pi}} \rd W_k(t)$ for some independent standard Brownian motions $(V_k)$, $(W_k)$. Therefore, with $h_t = \sum_k A_k(t)  \cos( k \cdot) + B_k(t) \sin( k \cdot)$, the above dynamics can be written as $\rd A_k(t) = - k A_k(t) \rd t + \sqrt{2}  \rd V_k(t)$ and, similarly, $\rd B_k(t) = - k B_k(t) \rd t + \sqrt{2} \rd W_t$. This is an infinite dimensional Ornstein-Uhlenbeck process and $A_k(t) = e^{-k t} A_k(0) + \sqrt{2} \int_0^t e^{-k(t-s)} \rd V_k(s)$ (similarly for $B_k$).

The identification in law of these two processes follows by a covariance calculation since both fields are Gaussian. Indeed, using the coordinates $z = t + \ii x$, $w=s+\ii y$ so $ \max(t,s) = \log \max(|e^z|,|e^w| )$, this follows from
$$
\sum_{k \geq 1} \frac{\cos(k(x-y))}{k} e^{- k|t-s|}  = - \log | 1- e^{-|t-s|} e^{\ii (y-x)} |    = \log \frac{\max( |e^z|,|e^w|) }{|e^z-e^w|}.
$$
Note that if $(h_t)$ solves \eqref{eq:dynamics-gff-circles}, $\tilde{h}_t = a h_{bt}$ solves $\rd \tilde{h}_t = - b (- \Delta_{\mb{U}})^{1/2}  \tilde{h}_t \rd t+ a \sqrt{b} \sqrt{2\pi} \widetilde{W}(\rd x,\rd t)$.

\eqref{eq:dyson-free-field} is natural from the point of view of the characteristic polynomial process. From the GFF point of view, the explicit form of \eqref{eq:dynamics-gff-circles} naturally arises from the Markov property of the free field. Indeed, instead of viewing $(h_t)$ as the trace of the free field on $e^{-t}\mb{U}$, it is equivalent to view it as the harmonic part of the Markov decomposition of $\Phi$ on $e^{-t} \mb{D}$, $h_t(z) = H h_{t | \mb{U}} (z)$ where $H$ denotes the harmonic extension. Then, writing $\Phi = h_0 + \phi_0$ on $\mb{D}$, where $\phi_0$ is an independent GFF with zero boundary values, it follows that
\begin{equation}
\label{eq:harmonic-decompo}
h_t(z) = h_0(e^{-t} z) + H_t(\phi_0)(e^{-t}z),
\end{equation}
where $H_t$ denotes the harmonic projection on $e^{- t} \mb{D}$. \eqref{eq:harmonic-decompo} readily implies that $(h_t)$ is a Markov process.  On the circle $w \in \mb{U}$, formally, $\frac{\rd}{\rd t}_{|t=0} h_0(e^{-t} w) = \frac{\rd}{\rd t}_{|t=0} H h_0(e^{-t} w) = \partial_n H h_0 $ where $\partial_n$ is the inward pointing normal derivative and $\partial_n H$ is the Dirichlet-to-Neumann operator, which here coincides with $- (- \Delta_{\mb{U}})^{1/2}$. This is a formal way for retrieving the drift part of \eqref{eq:dynamics-gff-circles}. In fact, from \eqref{eq:harmonic-decompo} and using the martingale problem approach, one can rigorously prove that the dynamics of $(h_t)$ are given by \eqref{eq:dynamics-gff-circles}. This approach is more robust and avoids having to guess the exact dynamics. For more  details, a generalization can be found in \cite{LQG-growth} which considers instead of Euclidean growth the metric growth associated with the LQG metric.

\medskip

\noindent {\bf Free field on the cylinder.}  When $\beta =2$,  the covariance of the limiting field associated with \eqref{eq:dyson-free-field} is 
\begin{equation}
\label{eq:cov-field}
\E (h(z) h(w)) = \frac{1}{2} \log \frac{ \max(|e^z|,|e^w| )}{|e^z-e^w|} = \frac{1}{2} \sum_{k \geq 1} \frac{\cos(k(x-y))}{k} e^{- k|t-s|} = {\rm P}_{|t-s|} C(x-y)
\end{equation}
where 
\begin{equation}
\label{def:cov-C}
C(x,y) = C(x-y) =  - \frac{1}{2} \log |e^{\ii x} - e^{\ii y}|.
\end{equation} 
This is an expression of the Green function associated with the Laplacian on $\mc{C} := \mb{R} \times \mb{U}$, $\Delta_{\mc{C}} =\partial_t^2 + \partial_\theta^2$.  Indeed, with $\hat{F}(\xi,k) := \frac{1}{2\pi}  \int_{\mb{R}} \int_{\mb{U}} F(t,x) e^{-\ii t \xi} e^{-\ii k x} \rd t \rd x$, we have $F(t,x) = \frac{1}{2\pi} \sum_{k \neq 0} \int_{\mb{R}} \hat{F}(\xi,k) e^{\ii k x} e^{\ii t \xi} \rd\xi$ so $-\Delta_{\mc{C}} F (t,x) =  \frac{1}{2\pi}  \sum_{k \neq 0} \int_{\mb{R}} (k^2+\xi^2) \hat{F}(\xi,k) e^{\ii k x} e^{\ii t \xi} \rd\xi$ and $(-\Delta_{\mc{C}})^{-1}$ has symbol given by $\frac{1}{k^2 + \xi^2}$. We retrieve the covariance kernel
$$
(-\Delta_{\mc{C}})^{-1} F(t,x) = \frac{1}{2\pi} \sum_{k \neq 0} \int_{\mb{R}} \hat{F}(\xi,k) \frac{e^{\ii kx} e^{\ii t \xi}}{k^2+\xi^2} \rd\xi = \int_{\mb{R} \times \mb{U}} F(s,y)  (-\Delta_{\mc{C}}^{-1})(s,x;t,y)\rd s\rd y
$$
where $(-\Delta_{\mc{C}})^{-1}(s,x;t,y)$ is given by
$$
\frac{1}{(2\pi)^2} \sum_{k \neq 0} \int_{\mb{R}} \frac{e^{\ii k(x-y)} e^{\ii \xi (t-s)}}{k^2+\xi^2} \rd\xi = \frac{1}{(2\pi)^2} \sum_{k \neq 0} \int_{\mb{R}} \frac{1}{k^2} \frac{e^{\ii k(x-y)} e^{\ii \xi (t-s)}}{1+(\xi/k)^2} \rd\xi =\frac{1}{(2\pi)^2} \sum_{k \neq 0} \int_{\mb{R}} \frac{1}{|k|} \frac{e^{\ii k(x-y)} e^{\ii \omega k (t-s)}}{1+\omega^2}  \rd\omega.
$$
By using $\frac{1}{\pi} \int_{\mb{R}} \frac{e^{\ii \omega x}}{1+\omega^2} \rd \omega = e^{-|x|}$, we get $\frac{1}{4\pi} \sum_{k \neq 0} \frac{1}{|k|} e^{ik(x-y)} e^{-|k| |t-s|} = \frac{1}{2\pi} \sum_{ k \geq 1} \frac{\cos(k(x-y))}{k} e^{-|k| |t-s|} $ hence  
\begin{equation}
\E(h(z)h(w))=\E(h(s,x) h(t,y)) = \pi (-\Delta_{\mc{C}})^{-1}(s,x;t,y).
\end{equation}

\subsection{Submicroscopic smoothing.}\ \label{sec:subsmooth}
In this section,  we explain that in our main result Theorem \ref{thm:FH}, without loss of generality we can assume that the logarithmic singularities from the determinants are smoothed on a submicroscopic scale.
More precisely, given a fixed small parameter $\alpha>0$, we define in the following logarithms smoothed on scale
\[
\rho=N^{-1-\alpha}.
\]
The regularization below  of the logarithm on scale $\rho$ is harmless as typically there are no particles below that scale, but it will help for the proof of the key Lemma \ref{lem:loopEqn}.

\begin{definition}\label{def:SmoothLog}
For any $z\in\partial\mathbb{D}$  consider the following functions  $\ell^z_+,\ell^z_-:\partial\mathbb{D}\to\mathbb{R}$.
\begin{itemize}
\item[--] $\ell^z_+(w)\geq \log |z-w|$,   $\ell_+^z(ze^{\ii\varphi})=\ell_+^z(ze^{-\ii\varphi})$, $\ell^z_+(w)=\log|z-w|$ when $|z-w|> 2\rho$,   $\ell^z_+(w)=\log\rho$ when $0<|z-w|<\rho$, and 
$|\partial_{\psi}^k\ell^{z}_+(e^{\ii\psi})|\leq C_k\min(\rho,|z-e^{\ii\psi}|)^{-k}$ for any $k\geq 1$.
\item[--] $\ell^z_-(w)=\int\chi_\rho(\varphi)\log|e^{\ii\varphi}w-z|\rd \varphi$ where $\chi\geq 0$ is smooth, even, supported on $[-1,1]$,  $\int\chi=1$, and
$\chi_\rho(x)=\rho^{-1}\chi(x/\rho)$. Note that $\ell_-$ also satisfies  $|\partial_{\psi}^k\ell^{z}_-(e^{\ii\psi})|\leq  C_k\min(\rho,|z-e^{\ii\psi}|)^{-k}$.
\end{itemize}
\end{definition}

The following lemmas show that Theorem \ref{thm:FH} only needs to be proved for $\ell_+,\ell_-$.

\begin{lemma}\label{lem:smoothing}
With the notations from Theorem \ref{thm:FH},  for some fixed $\tilde c>0$ we have
\begin{align*}
\E\Big[e^{\sum_{s \in \mathscr{B}} \Tr f_s(U_s)  } \prod_{z = t + \ii \theta\in\mathscr{A}} | \det (U_t - e^{\ii \theta}) |^{\gamma_z} \Big]\leq\, & \E\Big[e^{\sum_{s \in \mathscr{B}} \Tr f_s(U_s)  +\sum_{z = t + \ii \theta\in\mathscr{A}}\gamma_z\Tr\ell^{e^{\ii\theta}}_+(U_t)}\Big],\\
\E\Big[e^{\sum_{s \in \mathscr{B}} \Tr f_s(U_s)  } \prod_{z = t + \ii \theta\in\mathscr{A}} | \det (U_t - e^{\ii \theta}) |^{\gamma_z} \Big]\geq\, &\E\Big[e^{\sum_{s \in \mathscr{B}} \Tr f_s(U_s)  +\sum_{z = t + \ii \theta\in\mathscr{A}}\gamma_z\Tr\ell^{e^{\ii\theta}}_-(U_t)}\Big]\cdot (1-N^{-\alpha/2})\\
&-e^{N \sum_{\mathscr{B}}  \dashint f_s -\tilde c(\log N)^2}.
\end{align*}
\end{lemma}

\begin{proof}
The first inequality is trivial because $\ell_+^{e^{\ii\theta}}(e^{\ii\psi})\geq \log|e^{\ii\theta}-e^{\ii\psi}|$ for any real $\theta,\psi$.

The second inequality relies on invariance by rotation and Jensen's inequality: Denoting  $X$ a random variable with density $\chi_\rho$, we have
\begin{align*}
\E\Big[e^{\sum_{s \in \mathscr{B}} \Tr f_s(U_s)  +\sum_{z = t + \ii \theta\in\mathscr{A}}\gamma_z\Tr\log|U_t-e^{\ii\theta}|}\Big]&=\int\E\Big[e^{\sum_{s \in \mathscr{B}} \Tr f_s(e^{\ii\varphi}U_s)  +\sum_{z = t + \ii \theta\in\mathscr{A}}\gamma_z\Tr\log|e^{\ii\varphi}U_t-e^{\ii\theta}|}\Big]\chi_\rho(\varphi)\rd\varphi\\
&=\E\E_{X}\Big[e^{\sum_{s \in \mathscr{B}} \Tr f_s(e^{\ii X}U_s)  +\sum_{z = t + \ii \theta\in\mathscr{A}}\gamma_z\Tr\log|e^{\ii X}U_t-e^{\ii\theta}|}\Big]\\
&\geq \E\Big[e^{\sum_{s \in \mathscr{B}} \Tr \E_Xf_s(e^{\ii X}U_s)  +\sum_{z = t + \ii \theta\in\mathscr{A}}\gamma_z\E_X\Tr\log|e^{\ii X}U_t-e^{\ii\theta}|}\Big]\\
&= \E\Big[e^{\sum_{s \in \mathscr{B}} \Tr \tilde f_s(U_s)  +\sum_{z = t + \ii \theta\in\mathscr{A}}\gamma_z\Tr\ell^{e^{\ii\theta}}_-(U_t)}\Big],
\end{align*}
where
\begin{equation}\label{eqn:tildef}
\tilde f_s(w)=\int\chi_\rho(\varphi)f_s(we^{\ii\varphi})\rd\varphi.
\end{equation}
After ordering $0\leq \theta_2(s)-\theta_1(s)\leq\dots \leq \theta_N(s)-\theta_1(s)\leq 2\pi$, we now consider the rigidity event 
\[
\mathscr{R}=\bigcap_{s\in\mathscr{B},1\leq i<j\leq N}\{|\theta_j(s)-\theta_i(s)-\frac{2\pi(j-i)}{N}|\leq \frac{N^{c}}{N}\}
\]
where $c$ can be any constant chosen in $(0,\delta)$ and $\delta$ is the regularity scale from the assumptions of Theorem \ref{thm:FH}.
From (\ref{eqn:rigidity}) and a union bound we know that $\mathbb{P}(\mathscr{R}^{\rm c})\leq e^{-c(\log N)^2}$. Together with the Cauchy-Schwarz inequality,  Lemma \ref{lem:regularity} below and Lemma \ref{lem:Johansson}, this implies
\[
\E\Big[e^{\sum_{s \in \mathscr{B}} \Tr \tilde f_s(U_s)  +\sum_{z = t + \ii \theta\in\mathscr{A}}\gamma_z\Tr\ell^{e^{\ii\theta}}_-(U_t)}\mathds{1}_{\mathscr{R}^{\rm c}}\Big]\leq 
e^{N \sum_{\mathscr{B}}  \dashint f_s -\tilde c(\log N)^2}.
\]
From Definiton \ref{subsec:fctSpaces},  $\|f_s'\|_\infty\leq (\log N) N^{1-\delta}$, and on $\mathscr{R}$ we have $\#\{\theta_i\in{\rm supp} f_s\}\lesssim \log N$,  so that on $\mathscr{R}$ we have 
\[
|{\rm Tr}\tilde f_s(U_s)-{\rm Tr} f_s(U_s)|\leq N^\delta\cdot (\log N)^2N^{1-\delta} N^{-1-\alpha}\leq N^{-\alpha/2}.
\]
The conclusion immediately follows.
\end{proof}

Next, we establish the reverse inequalities to recover the log-singularities from their smooth approximations.  For our purpose, it suffices to prove these reverse inequalities in the single-time, single log-singularity case. The argument relies on a simple asymptotic result for the Hua–Pickrell kernel, $K^{{\rm HP}(\gamma)}$, which is the correlation kernel of the determinantal point process associated with the biased measure on the particles defined by
\begin{align*} 
\E^{{\rm HP}(\gamma)}[f(U)]:=\E\Big[f(U)\frac{|\det(U-\Id)|^{\gamma}}{\E[|\det(U-\Id)|^{\gamma}]}\Big]
\end{align*}
where $\gamma\geq 0$. The following key observation about this kernel follows directly from Theorem 3.6 and Proposition 3.7 in \cite{bourgade2009random}.
\begin{lemma}\label{lemma:local_HP_kernel}
Fix some $C>0$ and let $\gamma\in[0,C]$ be a constant. Then, there exists a $c>0$ depending only on $C$ such that for every $N\in\N$ and $\theta\in[-\frac{1}{N},\frac{1}{N}]$,
\begin{align*} 
|K^{{\rm HP}(\gamma)}(\theta,\theta)|\leq c |\theta|^{\gamma}N^{1+\gamma}.
\end{align*}
As a corollary, by Hadamard's inequality,
\begin{align}\label{eqn:HP_corr}
\rho_k^{{\rm HP}(\gamma)}(\theta_1,\dots,\theta_k)\leq \prod_{i=1}^{k} K^{{\rm HP}(\gamma)}(\theta_i,\theta_i)\leq c^k|\theta_1\theta_2\cdots \theta_k|^{\gamma} N^{k(1+\gamma)}
\end{align}
where $\rho_{k}^{{\rm HP}(\gamma)}$ is the $k$-point correlation function.
\end{lemma}

\begin{lemma}\label{lem:single_smoothing} Given $\alpha,C>0$ and $\rho=N^{-1-\alpha}$, for any $\gamma\in[0,C]$ we have,
\begin{align} \label{eqn:log_delta_reg}
&\E[e^{\gamma\Tr\ell_{+}^{1}(U)}]\leq \E[e^{\gamma\Tr\log|U-\Id|}](1+\OO(N^{-\alpha})),\\
\label{eqn:log_conv_delta_reg}
&\E[e^{\gamma\Tr\ell_{-}^{1}(U)}]\geq \E[e^{\gamma\Tr\log|U-\Id|}](1+\OO(N^{-\alpha/2})),
\end{align}
where the implicit constants depend only on $C$.
\end{lemma}
\begin{proof} We omit the superscript $1$ in $\ell_{+}$ and $\ell_-$ for simplicity and define a counting set $$A_k:=\{\textnormal{number of eigenangles in $[-3\rho,3\rho]$ is exactly $k$}\}.$$
Then
\begin{align*}
\E[e^{\gamma\Tr\ell_{+}(U)}]&=\E[\mathds{1}_{A_0}\cdot e^{\gamma\Tr\log|U-\Id|}]+\sum_{k=1}^{N}\E[\mathds{1}_{A_k}\cdot e^{\gamma\Tr\ell_{+}(U)}]
\\
&\leq \E[e^{\gamma\Tr\log|U-\Id|}]\Big(1+\sum_{k=1}^{N}\E^{{\rm HP}(\gamma)}[\mathds{1}_{A_k}\cdot e^{\gamma\Tr(\ell_{+}-\log|1-\cdot|)(U)}] \Big).
\end{align*}
For any $k\geq 1$,
\begin{align*} 
\E^{{\rm HP}(\gamma)}[\mathds{1}_{A_k}\cdot e^{\gamma\Tr(\ell_{+}-\log|1-\cdot|)(U)}] &\leq  \mathbb{E}^{{\rm HP}(\gamma)}\Big[\sum_{\substack{i_1,\dots,i_k\in[\![N]\!]\\\textnormal{and distict}}} \delta_{(\theta_{i_1},\dots,\theta_{i_k})}([-3\rho,3\rho]^k) \cdot \prod_{j=1}^{k}(\frac{4\rho}{|\theta_{i_j}|})^{\gamma} \Big]]
\\
&\leq \int_{[-3\rho,3\rho]^k} \prod_{j=1}^{k}(\frac{4\rho}{|\theta_{j}|})^{\gamma}c^k|\theta_1\theta_2\cdots \theta_k|^{\gamma} N^{k(1+\gamma)}\rd\theta_{1}\dots\rd\theta_{k}
\leq (\frac{4c}{N^{\alpha}})^{k+k\gamma}
\end{align*}
where we have used equation \eqref{eqn:HP_corr}. Substituting this gives the necessary bound.

Moving onto the second inequality, \eqref{eqn:log_conv_delta_reg}, we introduce an intermediate function in order to evaluate long range and short range separately:
$$\ell:=(1-\chi_{\varepsilon})\cdot\log|1-\cdot\,|+\chi_{\varepsilon}\cdot\ell_{-}$$
where $\varepsilon:=N^{-1-\alpha/2}$ and $\chi_{\epsilon}(z)$ is a smooth bump function on unit circle that is $1$ if $|z-1|<\epsilon$, and $0$ if $|z-1|>2\epsilon$. Short range can be handled similarly to the proof of the inequality \eqref{eqn:log_delta_reg}. We define $$B_k:=\{\textnormal{number of eigenangles in $[-3\varepsilon,3\varepsilon]$ is exactly $k$}\}$$ and let $\ell_{+,\varepsilon}$ be defined analogously to $\ell_{+}$, except with $\varepsilon$ is used as the scaling parameter in place of $\rho$. Then,
\begin{align} 
\E[e^{\gamma\Tr\ell(U)}]&=\E[e^{\gamma\Tr\log|U-\Id|}]+\sum_{k=1}^{\infty}\Big(\E[\mathds{1}_{B_k}e^{\gamma\Tr\ell(U)}]-\E[\mathds{1}_{B_k}e^{\gamma\Tr\log|U-\Id|}]\Big) \nonumber
\\
&=\E[e^{\gamma\Tr\log|U-\Id|}]+O\Big(\sum_{k=1}^{\infty}\E[\mathds{1}_{B_k}e^{\gamma\Tr\ell_{+,\epsilon}(U)}]\Big)=\E[e^{\gamma\Tr\log|U-\Id|}](1+\OO(N^{-\alpha/2})) \label{eqn:ell_to_log}
\end{align}
where in the second inequality we used $\ell_{+,\epsilon}\geq\ell$ and $\ell_{+,\epsilon}\geq\log|1-\cdot\hspace{.07cm}|$ and the last equality is as shown during the proof of equation \eqref{eqn:log_delta_reg}.

Finally, we finish the proof by showing $\E[e^{\gamma\Tr\ell_{-}(U)}]=\E[e^{\gamma\Tr\ell(U)}](1+\OO(N^{-\alpha/2}))$. Consider the rigidity set, i.e. when the eigenangles are ordered, $0\leq\theta_1\leq\dots\leq \theta_N\leq 2\pi$, and 
$$\widetilde{\mathscr{G}}=\bigcap_{1\leq k\leq N}\left\{|\theta_k-\frac{2\pi k}{N}|\leq \frac{\varphi}{N}\right\}$$
where $\varphi=(\log N)^{\log\log N}$ for which we have $\mathbb{P}(\widetilde{\mathscr{G}}^c)\leq e^{-(\log N)^{\log \log N}}$ (see, e.g., equation (1.8) in \cite{lambert2021mesoscopic}). Simple calculations show that on $\widetilde{\mathscr{G}}$, $\Tr(\ell-\ell_{-})(U)\leq \varphi^2 N^{-\alpha}$. Hence,
\begin{align*} 
\E[e^{\gamma\Tr\ell_{-}(U)}]&=\E[\mathds{1}_{\widetilde{\mathscr{G}}}\cdot e^{\gamma\Tr\ell(U)}](1+\OO(N^{-\alpha/2}))+\OO\Big(\Prob(\widetilde{\mathscr{G}}^c)^{1/2}\E[e^{2\gamma\Tr\ell_{-}(U)}]^{1/2}\Big)
\\
&=\E[e^{\gamma\Tr\ell(U)}](1+\OO(N^{-\alpha/2}))+\Prob(\widetilde{\mathscr{G}}^c)^{1/2}\OO\Big(\E[e^{2\gamma\Tr\ell_{+,\epsilon}(U)}]^{1/2}\Big)
\\
&=\E[e^{\gamma\Tr\log|U-\Id|}](1+\OO(N^{-\alpha/2}))
\end{align*}
where we have used $\ell_{+,\epsilon}\geq\ell$ and $\ell_{+,\epsilon}\geq\ell_{-}$ in the second equality and equations \eqref{eqn:ell_to_log}, \eqref{eqn:log_delta_reg} with Lemma \ref{lem:one-sing} in the third.
\end{proof}

\begin{lemma}\label{lem:regularity}
Let 
$\ell^{\ii\theta}_+,\ell^{e^{\ii\theta}}_-$ be
as in Definition \ref{def:SmoothLog}. Then there exists $\tilde C=\tilde C(\alpha)$ such that
\[
\|\ell^{e^{\ii\theta}}_\pm\|_{{\rm H}}^2\leq \tilde C\log N.
\]
\end{lemma}
\begin{proof}
%
We prove the result for $\ell_+$ (the proof for $\ell_-$ is identical). Without loss of generality we consider $\ell(\varphi):=\ell_+^1(e^{\ii\varphi})$.
For $k\geq \rho^{-1}$ we have
$
|\hat \ell_k|\lesssim k^{-2}\int|\ell''|\lesssim k^{-2}\rho^{-1},
$
so that $\sum_{|k|\geq \rho^{-1}}|k|\cdot|\hat\ell_k|^2\lesssim 1$.

For $k\leq \rho^{-1}$,  we note that
$
\hat \ell_k=\int_{-\pi}^\pi\ell(\varphi)e^{\ii k\varphi}\rd\varphi=k^{-1}\int_0^\pi\ell'(\varphi)(e^{\ii k\varphi}-e^{-\ii k\varphi})\rd\varphi,
$
and
\begin{multline*}
|\int_0^\pi\ell'(\varphi)(e^{\ii k\varphi}-e^{-\ii k\varphi})\rd\varphi|\lesssim \int_0^{1/k}\min(\rho^{-1},\varphi^{-1})k\varphi\rd\varphi+|\int_{1/k}^\pi\ell'(\varphi)\sin(k\varphi)\rd\varphi|\\
\lesssim 1+k^{-1}|\int_{1/k}^\pi\ell''(\varphi)\cos(k\varphi)\rd\varphi|+k^{-1}\ell'(1/k)\lesssim 1.
\end{multline*}
We have proved  $\hat \ell_k\lesssim k^{-1}$ for $k\leq \rho^{-1}$, so $\sum_{k\leq \rho^{-1}}k|\hat \ell_k^2|\lesssim \log N$ and the proof is complete.
\end{proof}

\section{Multi-time determinantal point process}

\label{sec:DPP}

$1d$ Markov processes such as random walks or diffusions conditioned not to intersect arise in many statistical mechanics models. In the continuous setting, the Karlin-McGregor formula \cite{KarMcGreg1959} allows to understand the probability distribution of these non-intersecting paths by viewing them as measures defined by products of several determinants. The Eynard-Mehta theorem states that these are determinantal point processes (point processes for which the correlation functions can be expressed as determinants of an associated kernel), a large class that appears in random matrix theory, growth processes, directed polymers, tilings and combinatorics, to name a few. Nice introductions and more background can be found in \cite{Joh2006, Bor2011} and references therein.

\subsection{The extended kernel}.
\label{subsec:kernel}
Motivated by universality associated with nonequilibrium eigenvalue statistics, Pandey and Shukla  \cite{PanShu1991} studied in 1991 the Dyson dynamics with $\beta =2$ started from two initial conditions, COE and CSE, and expressed their correlation functions as determinants.  Below, we show that when started from equilibrium, namely CUE initial condition, the associated process is a determinantal point process and provide an expression of its kernel. We have a modern treatment, using the Eynard-Mehta theorem and we then discuss the case of arbitrary initial conditions. As a comparison, the stationary GUE case where the Brownian motions are on the real line instead of the circle can be found in \cite{Joh2005} (see, e.g., Equation (2.12)). Here, some extra care is needed, one of the reasons being that there is no canonical ordering of the particles since they are winding around the unit circle.

\begin{proposition}
\label{Prop:DPP-kernel}
The eigenvalues of the unitary Brownian motion $(U_t)_{t \geq 0}$ from 
(\ref{eqn:Unitbis}),  started from the Haar measure $\{ (j, e^{\ii \theta_k(t_j)})_{1 \leq k \leq N} ~ : ~ 1 \leq j \leq J \}$ form a determinantal point process with kernel given by
\begin{align}
\label{eq:kernel-diag}
K(i,x;j,y)  = & \frac{\mathds{1}_{i \leq j}}{2\pi}  e^{- (\frac{N-1}{2})^2  \frac{|t_i-t_j|}{N}} \sum_{1\leq k \leq N}  e^{(k-\frac{N+1}{2})^2  \frac{|t_i-t_j|}{N}} e^{\ii (x-y) (k-\frac{N+1}{2})}  \\
&  - \frac{\mathds{1}_{i > j}}{2\pi} e^{ (\frac{N-1}{2})^2  \frac{|t_i-t_j|}{N}} \sum_{k \in [1,N]^c}  e^{-(k-\frac{N+1}{2})^2  \frac{|t_i-t_j|}{N}} e^{\ii (x-y) (k-\frac{N+1}{2})},
\end{align}
where $x,y$ here are angles. Namely, for any bounded and measurable function $g : \llbracket 1,J\rrbracket  \times \mb{U} \to \mb{R}$, we have 
$$
 \E \left( \prod_{j=1}^{J} \prod_{i=1}^N  (1+g(j,z_i(t_j))    \right) = \sum_{k=0}^{\infty} \frac{1}{k!} \int_{( \llbracket 1,J\rrbracket \times [0,2\pi])^k} \left( \prod_{j=1}^k  g(j,x_j)\right) \det ( K((r_i,x_i);(r_j,x_j))_{i,j=1}^k  \lambda(dx) \#(dr).
$$
\end{proposition}

\textit{Sketch of the proof.} First, using \cite{Hobson-Werner}, we give an expression of the transition probability of Brownian motions on the circle conditioned on non-intersecting for all time. Then, using an argument from \cite{AriOCo2019}, we rewrite it as a product of determinants in order to apply the Eynard-Mehta theorem and compute thereby an expression of the associated extended kernel.

\begin{proof}
To lighten the notations, we prove it for $J=2$, the generalization to any fixed $J$ is straightforward. First, we need a result by Hobson and Werner \cite{Hobson-Werner}. In this paper, the authors consider Brownian motions on the circle killed when intersecting.  Conditioning on non-intersecting (for all times) corresponds to considering the dynamics
$$
\rd \theta_j = \frac{1}{2} \sum_{i \neq j} \cot \left(  \frac{\theta_j-\theta_i}{2}\right) \rd t +  \rd B_j(t),
$$
see (4.1) in their paper, where the $\theta_j$'s are the angles and the $B_j$'s are standard Brownian motions. This is the time change $t \to \frac{t N}{2}$ and $\beta =2$ in \eqref{eqn:Dyson} (so we will eventually take $t \to \frac{2t}{N}$ in our formula).

Let $A_{s,t}$ be the event that trajectories do not intersect between times $s$ and $t$, and $\mathbb{P}$ the distribution if independent BMs on the torus. With the notations from \cite{Hobson-Werner}, the transition probability $q_t$ of $Z$  ((4.1) in \cite{Hobson-Werner}) is
\begin{multline}
\label{eq:transition}
q_t(x,y) =\lim_{T\to\infty}\mathbb{P}((x,0)\to(y,t)\mid A_{0,T})  =\lim_{T\to\infty}\frac{\mathbb{P}((x,0)\to(y,t), A_{0,t})\mathbb{P}_y(A_{0,T-t})}{\mathbb{P}_x(A_{0,T})}  \\
=\lim_{T\to\infty}\frac{\mathbb{P}_y(A_{0,T-t})}{\mathbb{P}_x(A_{0,T})}  q_t^*(x,y) =e^{\lambda_N t}\frac{|(y)|}{|\Delta(x)|}q_t^*(x,y), 
\end{multline}
where we used the notation  $\Delta(x)=\prod_{k < \ell}(e^{\ii x_\ell}-e^{\ii x_k})$ and the result from \cite{Hobson-Werner}:
$$
\mathbb{P}_x(A_{0,T})\underset{T\to\infty}{\sim}c_N e^{-\lambda_N T}|\Delta(x)|, \qquad \lambda_N:=\frac{N(N-1)(N+1)}{24}.
$$

Here, $q_t^*$ denotes the transition density of $N$ Brownian motions on the circle killed when any two of them collide. \cite{Hobson-Werner} gives an expression of this term and we borrow an argument by Arista and O'Connell \cite[Section 5.1]{AriOCo2019} to rewrite it. When $x, y$ belong to the set  $\{ z_1 < \dots < z_N < z_1+ 2\pi \} \cap  \{ z_1 \in [-\pi, \pi) \}$,
$$
q_t^*( e^{\ii x}, e^{\ii y} ) = \frac{1}{N} \sum_{u=0}^{N-1} \det \left( \sum_{k \in \Z} \eta^{uk} p_t(x_i, y_j+2\pi k) \right)  
$$
where $\eta = e^{\ii \frac{2\pi}{N}}$. With $\nu_{[\ell]}$ the representative of $\nu$ shifted by $\ell$ in $\{ z_1 < \dots < z_N < z_1 + 2\pi \} \cap  \{z_1 \in [-\pi, \pi) \}$ (i.e., $z_i \to z_{i+\ell \mod N}$), it was remarked in  \cite{AriOCo2019}  that
\begin{equation}
\label{eq:AriOCo2019}
\sum_{\ell = 0}^{N-1} q_t^*(e^{\ii x}, e^{\ii y_{[\ell]}}) = \det \left( \sum_{k \in \mb{Z}}  (-1)^{k(N-1)} p_t( x_i, y_j + 2k\pi) \right).
\end{equation}

The point process induced by the (ordered) vector $Z_t = \{ e^{\ii \theta_1(t)}, \dots, e^{\ii \theta_N(t)} \}$ is associated with counting functions $M_U(Z_t)$ where $U$ is an open subset of $\mb{U}$. Using that $F(M_U(w))$ is invariant under permutation and that the application $y \mapsto y_{[\ell]}$ is measure preserving, we have
$$
\int_{\mathrm{ordered}} q_t(x,y_{[\ell]}) F(M_U(y)) \rd y = \int_{\mathrm{ordered}} q_t(x,y_{[\ell]}) F(M_U(y_{[\ell]}))) \rd y =  \int_{\mathrm{ordered}} q_t(x,y) F(M_U(y)) \rd y 
$$
where ``ordered" $= \{w = (e^{\ii z_i})_{1 \leq i \leq N} ~ : ~ z_1 < \dots < z_N < z_1 + 2\pi, z_1 \in [-\pi, \pi) \}$. So, by using that  $| \Delta(y_{[\ell]}) | = | \Delta(y) |$ and combining \eqref{eq:transition} and \eqref{eq:AriOCo2019}, we have
$$
\E_x(F(M_U(Z_t))) = \int_{\mathrm{ordered}}  \frac{1}{N} \sum_{\ell=0}^{N-1} q_t(x,y_{[\ell]}) F(M_U(y)) \rd y =  \int_{\mathrm{ordered}}  w^N_{t,x}(y) F(M_U(y)) \rd y 
$$
where
\begin{equation}
\label{eq:weight-conditioned}
w^N_{t,x}(y) := \frac{e^{\lambda_N t}}{N} \frac{|\Delta(y)|}{|\Delta(x)|}  \det \left( \sum_{k \in \mb{Z}} (-1)^{k(N-1)} p_t( x_i, y_j + 2k\pi) \right).
\end{equation}
Note that when $y_1 < \dots < y_N < y_1 + 2\pi$, for $k < \ell$, $|e^{\ii y_{\ell}} - e^{\ii y_k} | = 2 | \sin ( \frac{y_\ell-y_k}{2} ) |  = 2 \sin ( \frac{y_\ell-y_k}{2} )$ since $y_{\ell} - y_k \in (0, 2\pi)$ hence
$$
|\Delta(y)| =\prod_{
k < \ell} 2 (e^{\ii y_\ell}-e^{\ii y_k})\cdot \frac{e^{-\ii\frac{y_k+y_\ell}{2}}}{2\ii} = \ii^{-\frac{N(N-1)}{2}} \Delta(e^{\ii y_1},\dots,e^{\ii y_N}) e^{-\ii\frac{N-1}{2}\sum y_k}  =  \ii^{-\frac{N(N-1)}{2}} \det(e^{\ii y_i(j-1-\frac{N-1}{2})}).
$$
So, \eqref{eq:weight-conditioned} is invariant under permutation of the $x_i$'s and under permutation of the $y_i$'s.

Starting from the Haar measure, for symmetric functions $F$ and $G$, we have
\begin{align*}
\E( F(Z_0) G(Z_t)) & \propto \int_{\mb{U}^N} F(x) \E_x (G(Z_t)) | \Delta(x) |^2 \rd x  \\
& \propto \int_{\text{ordered}^2} F(x) G(y) w_{t,x}^N(y) | \Delta(x) |^2 \rd x \rd y.
\end{align*}
Furthermore, $|\Delta (x) |^2 = \prod_{k < \ell} (e^{\ii x_\ell}-e^{\ii x_k}) \prod_{k < \ell} (e^{-\ii x_\ell}-e^{-\ii x_k}) = \Delta(e^{\ii x_1},\dots,e^{\ii x_N}) \Delta(e^{-\ii x_1},\dots,e^{- \ii x_N})$, so
$$
\frac{|\Delta(y)| }{|\Delta(x) |} |\Delta (x) |^2  =  \det(e^{\ii y_i(j-\frac{N+1}{2})})  \Delta(e^{-\ii x_1},\dots,e^{- \ii x_N})  e^{+\ii\frac{N-1}{2}\sum x_k} = \det(e^{-\ii x_i(j-\frac{N+1}{2})})  \det(e^{\ii y_i(j-\frac{N+1}{2})}) 
$$
and the joint density is proportional to 
$$
\det(e^{-\ii x_i(j-\frac{N+1}{2})}) \det \left( \sum_{k \in \mb{Z}} (-1)^{k(N-1)} p_t( x_i, y_j + 2k\pi) \right)  \det(e^{\ii y_i(j-\frac{N+1}{2})}).
$$
We conclude that the weight function associated to our random point process is of the form a product of several determinants. By the Eynard-Mehta theorem \cite{Eynard-Mehta} (see \cite[Section 2]{Joh2006} or \cite[Theorem 4.2]{Bor2011}), this is a determinantal point process, with kernel given by
\begin{align*}
K(0,x;0,y) & = \sum_{1 \leq i,j \leq N} ((G^{-1})^T)_{i,j} \Phi_i(x) \int_{\mb{U}} T(y,z) \Psi_j(z) \lambda(\rd z)  \\
K(0,x;1,y) & =  \sum_{1 \leq i,j \leq N} ((G^{-1})^T)_{i,j} \Phi_i(x) \Psi_j(y) \\
K(1,x;0,y) & = -  T(y,x) + \sum_{1 \leq i,j \leq N} ((G^{-1})^T)_{i,j} \int_{\mb{U}}  \Phi_i(z) T(z,x) \lambda(\rd z) \int_{\mb{U}}  T(y,z) \Psi_j(z) \lambda(\rd z) \\
K(1,x;1,y) & = \sum_{1 \leq i,j \leq N} ((G^{-1})^T)_{i,j} \int_{\mb{U}}  \Phi_i(z) T(z,x)  \lambda(\rd z) \Psi_j(y)
\end{align*}
where $G_{i,j} = \int_{\mb{U}^2} \Phi_i(x)  T(x,y) \Psi_j(y) \lambda(\rd x) \lambda(\rd y)$ and
\begin{equation}
\label{def:ort-pol-trans}
\Phi_i(x) = e^{- \ii x (i-\frac{N+1}{2})}, \qquad T(x,y) =  \sum_{k \in \mb{Z}} (-1)^{k(N-1)} p_t(x,y+2k\pi), \qquad \Psi_j(x) = e^{\ii x(j- \frac{N+1}{2})}.
\end{equation}
By taking $\delta = - \frac{N+1}{2}$, $x \to x-y$ in the summation formula of Lemma \eqref{lem:Poisson-Summation-Formula}, we have 
\begin{equation}
\label{eq:T-Poisson-sum}
T(x,y) = \frac{1}{2\pi} \sum_{n \in \mb{Z}} e^{\ii (x-y) (n- \frac{N+1}{2}) } e^{- \frac{1}{2}(n- \frac{N+1}{2})^2 t}  = \frac{1}{2\pi} \sum_{n \in \mb{Z}} u_n \Psi_n(x) \Phi_n(y) 
\end{equation}
where $u_n = e^{- \frac{1}{2}(n- \frac{N+1}{2})^2 t} $. This and $\int_{\mb{U}} \Phi_i(x) \Psi_j(x) \lambda(dx) = 2\pi \delta_0(j-i)$ imply \begin{equation}
\label{eq:integ}
\int_{\mb{U}} T(x,y) \Psi_j(y) \lambda(\rd y) = u_j \Psi_j(x) 
\end{equation}
so $G_{i,j} = \int_{\mb{U}^2} \Phi_i(x)  T(x,y) \Psi_j(y)  \lambda(dx) \lambda(\rd y)   = \int_{\mb{U}} \Phi_i(x) u_j \Psi_j(x) \lambda(\rd x) = 2\pi u_j   \delta_{0}(j-i) 
$. We observe that $((G^{-1})^T)_{i,j} = (2\pi)^{-1} \delta_{0}(j-i) u_j^{-1} $, $T(x,y) = T(y,x)$ and $u_{N+1-i} = u_i$. Therefore, we obtain
\begin{align*}
K(0,x;0,y) & = \frac{1}{2\pi} \sum_{i=1}^N \sum_{j=1}^N  \delta_{0}(j-i) u_j^{-1}  \Phi_i(x) u_j \Psi_j(y)   = \frac{1}{2\pi} \sum_{k=1}^N e^{\ii  (x-y) (k- \frac{N+1}{2})} = K(1,x;1,y)  \\
K(0,x;1,y) & = \frac{1}{2\pi}  \sum_{k=1}^N  e^{\frac{1}{2}(k-\frac{N+1}{2})^2t} e^{\ii (x-y) (k-\frac{N+1}{2})}\\
K(1,x;0,y) & =  - \sum_{k \in \mb{Z}} (-1)^{k(N-1)} p_t(y,x+2k\pi) + \frac{1}{2\pi}   \sum_{k=1}^N  e^{-\frac{1}{2}(k-\frac{N+1}{2})^2t} e^{\ii (x-y) (k-\frac{N+1}{2})}.
\end{align*}
The result follows by using \eqref{eq:T-Poisson-sum}, taking $t \to \frac{2t}{N}$ and conjugating the kernel by $e^{(\frac{N-1}{2})^2 \frac{t}{N}}$.
\end{proof}

Proposition \ref{Prop:DPP-kernel} and elementary calculations lead to the following corollary, which expresses the multi-time covariance of linear statistics in a remarkably simple form, even though we won't make use of it.

\begin{corollary}[Covariance of linear statistics] Consider the dynamics (\ref{eqn:Unitbis}) and denote  $\sgn(x)=\mathds{1}_{x>0} - \mathds{1}_{x<0}$.
For $H^{1/2}$ functions $f$ and $g$,  we have  for every $N$, $t \geq 0$,
\begin{align*}
\cov \left( \sum_k f(z_k(0)), \sum_k g(z_k(t)) \right) =   \sum_{|j| \leq N-1} \hat{f}_j \hat{g}_{-j} \sgn(j) e^{- |j| t} \frac{\sinh ( \frac{j^2 t}{N} )}{\sinh( \frac{jt}{N} )} +  \sum_{|j| \geq N} \hat{f}_j \hat{g}_{-j} e^{- j^2 \frac{t}{N}} \frac{\sinh( j t)} {\sinh( \frac{j t}{N})}.
\end{align*}
\end{corollary}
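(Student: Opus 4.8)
\emph{Proof strategy.} The plan is to feed the extended kernel of Proposition~\ref{Prop:DPP-kernel} (with $J=2$ and times $t_1=0$, $t_2=t$) into the standard truncated two-point formula for determinantal point processes, and then evaluate the resulting Fourier double sum by a geometric-series computation.

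First I would recall that if a point process is determinantal with extended correlation kernel $K$, then for two linear statistics supported on distinct ``sheets'' no diagonal term appears, so that
\[
\cov\Big(\sum_k f(z_k(0)),\,\sum_k g(z_k(t))\Big)=-\int_{\mb U^2}f(x)\,g(y)\,K(0,x;t,y)\,K(t,y;0,x)\,\lambda(\rd x)\lambda(\rd y).
\]
Into this I substitute the two off-diagonal blocks of \eqref{eq:kernel-diag}: for $t>0$, writing $a=k-\tfrac{N+1}{2}$ for the interior frequencies $|a|\le\tfrac{N-1}{2}$ and $b=\ell-\tfrac{N+1}{2}$ for the exterior ones $|b|\ge\tfrac{N+1}{2}$ in the shifted lattice $\mathbb Z-\tfrac{N+1}{2}$,
\[
K(0,x;t,y)=\frac{e^{-(\frac{N-1}{2})^2\frac tN}}{2\pi}\sum_{a}e^{a^2\frac tN}e^{\ii(x-y)a},\qquad
K(t,y;0,x)=-\frac{e^{(\frac{N-1}{2})^2\frac tN}}{2\pi}\sum_{b}e^{-b^2\frac tN}e^{\ii(y-x)b}.
\]
The prefactors $e^{\pm(\frac{N-1}{2})^2 t/N}$ cancel, and integrating in $x$ and $y$ against the characters $e^{\ii x(a-b)}$ and $e^{-\ii y(a-b)}$ collapses the double integral to $\sum_{a,b}e^{(a^2-b^2)t/N}\,\hat f_{b-a}\,\hat g_{-(b-a)}$ (convergence being immediate for $f,g\in{\rm H}^{1/2}$, since the exterior sum decays super-exponentially once $t>0$).

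Next I would set $j=b-a\in\mathbb Z$ and, for each fixed $j$, sum over the remaining free index. Using $a^2-b^2=-j(2a+j)$, the summand $e^{-j(2a+j)t/N}$ is geometric in $a$, and the constraint ``$a\in S_N$, $a+j\in S_N^{\mathrm c}$'' splits into two regimes: for $1\le|j|\le N-1$ exactly $|j|$ values of $a$ are admissible (the interval $[\tfrac{N+1}{2}-j,\tfrac{N-1}{2}]$ for $j>0$, its mirror for $j<0$, and none when $j=0$), whereas for $|j|\ge N$ every $a\in S_N$ is admissible, i.e.\ $N$ terms. Summing the finite geometric series and simplifying with $\frac{1-e^{-2Mx}}{1-e^{-2x}}=e^{-(M-1)x}\frac{\sinh(Mx)}{\sinh x}$ (with $x=jt/N$ and $M=|j|$, resp.\ $M=N$), the $m$-independent prefactor combines with the $\sinh$-ratio so that the exponent telescopes to $-|j|t$ in the first regime and to $-j^2t/N$ in the second; the factor $\sgn(j)$ in the first sum arises precisely because $\sinh$ is odd, so that $\sinh(j^2t/N)/\sinh(|j|t/N)=\sgn(j)\,\sinh(j^2t/N)/\sinh(jt/N)$. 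Summing over $j$ yields the stated identity for $t>0$, and the case $t=0$ follows by letting $t\to0^+$, where the two ratios tend to $|j|$ and $N$ respectively, recovering the classical CUE covariance of linear statistics.

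I expect the only mildly delicate point, hence the main obstacle, to be the index bookkeeping: checking that the admissible set of $a$ is exactly the claimed interval across the transition $|j|=N-1\to|j|=N$, and tracking the parity-dependent half-integer shift $\tfrac{N+1}{2}$ so that the prefactors and $\sinh$-ratios combine to a clean exponent. Everything else is routine Fourier analysis and summation of geometric series.
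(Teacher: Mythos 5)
Your proposal is correct and matches the paper's (unwritten) "elementary calculations" following Proposition~\ref{Prop:DPP-kernel}: plug the off-diagonal blocks of the extended kernel into the two-sheet covariance formula $\cov = -\iint f(x)g(y)K(1,x;2,y)K(2,y;1,x)\,\lambda(\rd x)\lambda(\rd y)$, reindex by $j=b-a$, and sum the finite geometric series, with the $j$ vs.\ $N$ dichotomy in admissible interior/exterior pairs producing the two regimes and the $\sgn(j)$ coming from the oddness of $\sinh$. The index counting (exactly $|j|$ admissible $a$ for $1\le|j|\le N-1$, all $N$ for $|j|\ge N$, none for $j=0$) and the telescoping of exponents via $\tfrac{1-e^{-2Mx}}{1-e^{-2x}}=e^{-(M-1)x}\tfrac{\sinh Mx}{\sinh x}$ are exactly right, as is handling $t=0$ by continuity.
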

Later on, we will use the following pointwise estimates on the off-diagonal terms of the kernel obtained in Proposition \ref{Prop:DPP-kernel}. In the following lemma, $\tau$ and $\mu$ are allowed to depend on $N$ and both are $\OO(N)$. In our applications, we will only need cases when they are $\OO(N^{\delta})$ for some $0<\delta <1$.
\begin{lemma}[Pointwise estimates]
\label{lem:off-diagonal}
With $x-y = \frac{\mu}{N}$ and $ t = \frac{\tau}{N}$, we have as $N \to \infty$,
\begin{align}
\label{eq:Riemann-sum-approx-1}
\frac{1}{N} K(0,x;1,y) & = \frac{1}{2\pi}  \int_{|z| < 1/2} e^{(z^2-\frac{1}{4}) \tau + \ii \mu z} \rd z +\OO( \frac{\tau + |\mu|}{N} ), \\
\label{eq:Riemann-sum-approx-2}
\frac{1}{N} K(1,x;0,y) & = \frac{1}{2\pi} \int_{|z| > 1/2} e^{(\frac{1}{4}-z^2) \tau + \ii \mu z} \rd z + \OO( \frac{\tau + |\mu|}{N} ).
\end{align}
Furthermore, when $\max(\tau, |\mu|) \gg 1$, $|K(0,x;1,y)| + |K(1,x;0,y) | = \OO( N/\max(\tau,|\mu|))$.
\end{lemma}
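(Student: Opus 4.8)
The plan is to start from the explicit form of the extended kernel given in Proposition~\ref{Prop:DPP-kernel} and to recognise the exponential sums appearing there as Riemann sums. Substituting $x-y=\mu/N$ and $t=\tau/N$, and setting $z_k=\tfrac1N(k-\tfrac{N+1}{2})$, the exponent $(k-\tfrac{N+1}{2})^2\tfrac tN$ becomes $z_k^2\tau$, the phase $(x-y)(k-\tfrac{N+1}{2})$ becomes $\mu z_k$, and the normalising exponent $(\tfrac{N-1}{2})^2\tfrac tN$ becomes $(\tfrac{N-1}{2N})^2\tau\to\tau/4$, so that
\[
\tfrac1N K(0,x;1,y)=\frac{1}{2\pi N}\sum_{k=1}^N e^{(z_k^2-(\frac{N-1}{2N})^2)\tau}\,e^{\ii\mu z_k},\qquad
\tfrac1N K(1,x;0,y)=-\frac{1}{2\pi N}\sum_{k\in[1,N]^c} e^{((\frac{N-1}{2N})^2-z_k^2)\tau}\,e^{\ii\mu z_k}.
\]
The key point is that $(z_k)_{1\le k\le N}$ are exactly the midpoints of the partition of $(-\tfrac12,\tfrac12)$ into $N$ intervals of length $1/N$, while $(z_k)_{k\in[1,N]^c}$ are the midpoints of the analogous partition of $\{|z|>\tfrac12\}$; both right-hand sides are therefore midpoint Riemann sums of mesh $1/N$, converging to $\tfrac1{2\pi}\int_{|z|<1/2}e^{(z^2-1/4)\tau}e^{\ii\mu z}\rd z$ and (up to the overall sign fixed by the kernel) $\tfrac1{2\pi}\int_{|z|>1/2}e^{(1/4-z^2)\tau}e^{\ii\mu z}\rd z$, which are the claimed main terms.

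Next I would make the two sources of error quantitative. First, replacing $(\tfrac{N-1}{2N})^2$ by $\tfrac14$ shifts each exponent by $(\tfrac14-(\tfrac{N-1}{2N})^2)\tau=\tfrac{2N-1}{4N^2}\tau=\OO(\tau/N)$ uniformly, hence multiplies each sum by $1+\OO(\tau/N)$; since the first sum has $N$ terms of modulus $\le1/N$ and the second is $\OO(1)$ by the geometric bound below, this is an additive $\OO(\tau/N)$. Second, the midpoint-rule error between $\tfrac1N\sum_k g(z_k)$ and $\int g$ is at most $\tfrac1N$ times the total variation of the integrand $g(z)=e^{\pm(z^2-1/4)\tau}e^{\ii\mu z}$, and $\int|g'|\le\int(2|z|\tau+|\mu|)e^{\pm(z^2-1/4)\tau}\rd z=\OO(\tau+|\mu|)$, using $2|z|\le1$ and $e^{(z^2-1/4)\tau}\le1$ on $(-\tfrac12,\tfrac12)$ for the first display, and the decay $e^{(1/4-z^2)\tau}\le e^{-(|z|-1/2)\tau}$ on $\{|z|>\tfrac12\}$ for the second; this decay also makes the truncation of the infinite lattice harmless, since the tail beyond $|z|=A$ is $\OO(e^{-(A-1/2)\tau})$ in both the sum and the integral. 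Combining these gives the additive error $\OO\big(\tfrac{\tau+|\mu|}{N}\big)$.

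For the final decay bound, when $\max(\tau,|\mu|)\gg1$ it is cleanest to estimate the sums directly. The Gaussian factor handles the $\tau$-dependence by comparison with a geometric series: grouping the terms by their distance (in index) to the points where the Gaussian exponent is largest — namely $k\in\{0,N+1\}$ for $K(1,x;0,y)$ and the two extreme indices $k\in\{1,N\}$ for $K(0,x;1,y)$ — one gets an exponent $\le-c\,(\mathrm{dist})\,\tau/N$, hence $\sum e^{\pm(\ldots)\tau}=\OO(N/\tau)$. The phase handles the $|\mu|$-dependence by Abel summation: as $(z_k)$ is an arithmetic progression with step $1/N$, the partial sums of $e^{\ii\mu z_k}$ are $\OO(N/|\mu|)$, and the coefficient sequence $e^{\pm(z_k^2-1/4)\tau}$ has total variation $\OO(1)$, so the full sums are $\OO(N/|\mu|)$ as well. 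Taking the better of the two yields $|K(0,x;1,y)|+|K(1,x;0,y)|=\OO(N/\max(\tau,|\mu|))$.

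The computation is elementary; the only real care needed is the uniformity of the error terms in $\tau$ and $\mu$, and the handling of the \emph{infinite} lattice sum defining $K(1,x;0,y)$, which is only conditionally summable until the Gaussian weight is accounted for. What makes that tail absolutely summable, and what produces the $\OO(N/\tau)$ decay, is precisely the sign dichotomy in Proposition~\ref{Prop:DPP-kernel}: a growing Gaussian exponent supported on $\{1\le k\le N\}$ when $i\le j$, versus a decaying one supported on $[1,N]^c$ when $i>j$.
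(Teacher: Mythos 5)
Your argument is correct and follows the same route as the paper's proof: recognize the kernel sums as Riemann sums of mesh $1/N$ for the limit integrals (the paper uses the variable $k/N\in(0,1)$ and a subsequent shift $z\mapsto z-\tfrac12$, while you parametrize directly by the midpoints $z_k=(k-\tfrac{N+1}{2})/N$ of $(-\tfrac12,\tfrac12)$ — cosmetically different, same content), then bound the tail of the $\tau$-dependence by comparison with a geometric series centred at the boundary indices, and treat the $|\mu|$-dependence by Abel summation against the bounded-variation Gaussian weight, exactly as in the paper's discrete integration by parts.

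One small remark: you correctly flag that, taken literally, the sign of $K(1,x;0,y)$ in Proposition~\ref{Prop:DPP-kernel} does not match the positive sign written in \eqref{eq:Riemann-sum-approx-2}; this discrepancy is also present in the source and is inconsequential, since only the modulus bound $|K(0,x;1,y)|+|K(1,x;0,y)|=\OO(N/\max(\tau,|\mu|))$ is used later (in the Hoffman--Wielandt step of Proposition~\ref{Prop:decoupling}). You could make this explicit rather than leaving it as a parenthetical.
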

We won't need \eqref{eq:Riemann-sum-approx-1} and \eqref{eq:Riemann-sum-approx-2}. The interest stems from the fact that when $\tau$ and $|\mu|$ are $\OO(1)$, they describe the limiting kernel at the microscale.

\begin{proof}
The first assertion \eqref{eq:Riemann-sum-approx-1} follows by using a Riemann sum approximation. Indeed, with $t = \frac{\tau}{N}$ and $x-y = \frac{\mu}{N}$, we have
$$
\frac{1}{N} K(0,x;1,y)  = \frac{1}{2\pi N} \sum_{k=1}^N  e^{ (\frac{k}{N}-\frac{1}{N})(\frac{k}{N}-1) \tau + \ii \mu (\frac{k}{N} - \frac{1}{2} - \frac{1}{2N}) } = \frac{1}{2\pi} \int_0^1 e^{x(x-1) \tau + \ii \mu (x-\frac{1}{2})} \rd x +\OO( \frac{\tau+|\mu|}{N}  )
$$
and we note $\int_0^1 e^{z(z-1) \tau + \ii \mu (z-\frac{1}{2})} \rd z = \int_{|z| < 1/2} e^{(z^2-\frac{1}{4})\tau + \ii \mu z} \rd z$. Along the same lines, we obtain \eqref{eq:Riemann-sum-approx-2}.

The second assertion \eqref{eq:Riemann-sum-approx-2} and the third one follow from elementary calculation. We explain the main ideas. For \eqref{eq:Riemann-sum-approx-2}, we note that the main contribution to $\sum_{k=-N}^N e^{((\frac{k}{N})^2-1)\tau}$ is $\sum_{(1-\eps) N \leq |k| \leq N  }  e^{((\frac{k}{N})^2-1)\tau} \leq 2 \sum_{(1-\eps) N \leq |k| \leq N  }  e^{-(1-\frac{k}{N})\tau} \leq 2 (e^{\frac{\tau}{N}}-1)^{-1} = \OO( N/\tau)$. The last one follows by a discrete integration by parts. Set $v_k = e^{((\frac{k}{N})^2-1)\tau}$, $e_k = e^{\ii \frac{k \mu}{N}}$, $w_0 = 0$ and $w_k = e_k + w_{k-1}$ for $1 \leq k \leq N$. Then, the term of interest, $\sum_{k=1}^N v_k e_k$ is equal to $v_N w_N + \sum_{k=1}^{N-1} (v_k-v_{k+1}) w_k$. Finally, $w_k = \sum_{\ell=1}^k e_{\ell} = \OO( 1) (e^{\ii \frac{\mu}{N}}-1)^{-1} = \OO( N/\mu)$ and $v_k$ is increasing.
\end{proof}

\noindent {\bf Out of equilibrium.} In the case of non-stationary initial data, the point process of eigenvalues at a fixed time is also determinantal point process and we provide here an expression of an associated kernel. In the Hermitian case, a self contained proof can be found in \cite[Appendix]{DuiJoh2018}. As seen above, the density of unlabeled eigenvalues (e.g., use a test function which is invariant under permutation) is proportional to
$$
w^N_{t,x}(y) := \frac{e^{\lambda_N t}}{N} \frac{|\Delta(y)|}{|\Delta(x)|}  \det \left( \sum_{k \in \mb{Z}} (-1)^{k(N-1)} p_t( x_i, y_j + 2k\pi) \right).
$$
Note that when $y_1 < \dots < y_N < y_1 + 2\pi$, we saw that we can write $|\Delta(y)| =  \ii^{-\frac{N(N-1)}{2}} \det(e^{\ii y_i(j-1-\frac{N-1}{2})})$ so that, recalling the notation $T$ and $\Psi$ in \eqref{def:ort-pol-trans}, we can identify (up to multiplicative constant) the weights 
$\det T(x_i,y_j)  \det \Psi_{i}(y_j)$. This is a biorthogonal ensemble (see \cite[Section 4]{Bor2011}) so a determinantal point process whose correlation kernel is given by $K_{t,x}(z,y) = \sum_{i,j}^N A^{-T}_{i,j} T(x_i,z) \Psi_j(y)
$ where, using \eqref{eq:integ},
$A_{i,j} = \int_{\mb{U}} T(x_i,y) \Psi_j(y) \lambda(dy)  = u_j \Psi_j(x_i) $ and $u_n = e^{- \frac{1}{2}(n- \frac{N+1}{2})^2 t} $.  Now, by recalling Cramer's formula,
$$
 \sum_{j=1}^n (A^{-1})_{i,j} b_j =  (A^{-1} b)_i = \frac{\det ( \text{col }  i \text{ of } A \text{ is replaced by } b)}{\det (A)},
$$
and we find
\begin{equation}
\label{eq:generel-kernel}
K_{t,x}(z,y) = \sum_{i,j} \Psi_j(y) (A^{-1})_{j,i} T(x_i,z)  = \sum_i T(x_i,z)  \frac{\det ( \text{line }  i \text{ of } A \text{ is replaced by } \Psi(y))}{\det (A)}.
\end{equation}
We denote by $\bar{A}^i$ the matrix for which the line $i$ in $A$ is replaced by $\Psi(y)$, i.e., $\bar{A}^i_{i,j} = \Psi_j(y)$ for $j \leq N$. Recalling \eqref{def:ort-pol-trans}, we have
\begin{align*}
\det A&  =( \prod_j u_j ) \det \Psi_j(x_i) = \prod_j u_j  \prod_i e^{-\ii x_i (\frac{N-1}{2})} \prod_{i < j} (e^{\ii x_j} - e^{\ii x_i }), \\
\det A^i &  =( \prod_j u_j ) \det \left( \Psi_j (x_k) 1_{k \neq i} + u_j^{-1} \Psi_j (y) 1_{k=i} \right).
\end{align*}
On the line $i$, we use (with $B_t \sim \mc{N}(0,t)$), $u_j^{-1} e^{\ii y (j-1) } = \E e^{ -\frac{N-1}{2} B_t}   e^{(\ii y+B_t) (j-1) }$. So, with simplifications coming from the quotient of Vandermonde determinants
$$
\frac{\det \bar{A}^i}{\det A} = e^{-\ii (y-x_i) ( \frac{N-1}{2})} \E e^{ -\frac{N-1}{2} B_t} \prod_{j \neq i} \frac{e^{\ii y +B_t}- e^{\ii x_j}}{e^{\ii x_i}- e^{\ii x_j}} = \E \prod_{j\neq i} \frac{\sin (\frac{y-\ii B_t -x_j}{2})}{\sin (\frac{x_i-x_j}{2})},
$$
and, going back to \eqref{eq:generel-kernel}, we obtain
\begin{equation}
\label{eq:kernel-out-eq}
K_{t,x}(z,y) =  \sum_i T(x_i,z) \E \prod_{j\neq i} \frac{\sin (\frac{y-\ii B_t -x_j}{2})}{\sin (\frac{x_i-x_j}{2})}.
\end{equation}
This expression is the analog of the Hermitian one used, e.g., in \cite{Joh2001, DuiJoh2018}. By using the residue theorem and expressing $\E$ as an integral, it is possible to give a contour integral representation of \eqref{eq:kernel-out-eq}.

\subsection{Asymptotic space-time decoupling}. \label{subsec:decoupling}
In the context of random matrices, using the determinant point processes machinery to obtain correlation/decorrelation estimates is not uncommon. A good illustration of the typical techniques can be found in \cite{PaqZei2017} which exploits the kernel obtained for the GUE minor process in \cite{JohNor2006} to derive such estimates. The starting point is usually a norm estimate for the differences of Fredholm determinants such as $|\det(I + A) - \det(I+B)| \leq |A-B| e^{1+|A|+|B|}$ (see \cite[Section 6.3]{PaqZei2017})  or a similar inequality for 2-regularized determinants (see \cite[Section 10]{PaqZei2017}). In our problem, such inequalities do not seem to be adapted since they give an additive error term and we look for a multiplicative one. We introduce here a method adapted for such errors.

For $j \in \llbracket 1, J\rrbracket$ and $E_j$'s on the unit circle, we define
\begin{equation}
\label{eq:truncated-singularities}
f_j(z)=e^{\gamma\ell_+(z-E_j)}\, \chi(\frac{z-E_j}{\lambda})+e^{\gamma\log(2\lambda)}\, \left(1-\chi(\frac{z-E_j}{\lambda})\right), \qquad \lambda=N^{-1+\kappa},
\end{equation}
where $\chi$ is fixed,  smooth,  $\chi(z)\in[0,1]$,  $\chi(z)=1$ for $|z|\leq 1$ and $\chi(z)=0$ for $|z|\geq 2$ (and recall Definition \ref{def:SmoothLog} for the definition of $\ell_+$).
Here $\kappa\in(0,1)$ is fixed.  The main result of this section, concerning the decoupling of the eigenvalues of the unitary Brownian motion \eqref{eq:UBM-conv},  is the following.

\begin{proposition}[Decoupling]
\label{Prop:decoupling} Let $\delta>0$,  $C>1$ such that $\min_{i \neq j} |(E_i,t_i)-(E_j,t_j)| \geq N^{-1+\delta}$
and $\gamma_z\in[0,C]$ for any singularity $z$.  Then for any $0\leq \alpha,\kappa\leq \frac{\delta}{100C}$ we have
$$
\E\left[\prod_{j=1}^J \prod_{i=1}^N f_j(z_i(t_j)) \right]  = \prod_{j=1}^J \E\left[\prod_{i=1}^N f_j(z_i(t_j)) \right]  (1+ \OO(N^{-\delta/3})).
$$
\end{proposition}

\begin{proof}
 The proof is easier to follow and has simpler notations for $J =2$ but generalizes immediately to an arbitrary fixed $J$. In this case we write $d = \max (\mu,\tau)$ where $\mu =  N\nc|E_1-E_0|$ and $\tau = N\nc |t_1-t_0|$,  so $d \geq N^{\delta}$. Furthermore,  the proposition is equivalent to prove that
$$
\E\left[\prod_{i=1}^N h_1(z_i(t_1))\cdot \prod_{i=1}^N h_0(z_i(t_0)) \right]  = 
\E\left[\prod_{i=1}^N h_1(z_i(t_1))\right]\cdot \E\left[\prod_{i=1}^N h_0(z_i(t_0)) \right]\cdot (1+\OO(N^{-\delta/3})),
$$
where $h_j(x)=\frac{f_j(x)}{f_j(E_j e^{2\ii\lambda})}$,  since they are equal up to a multiplicative constant. 
\bigskip

\bigskip

\noindent {\it First step: operators, spectra.} 
We introduce $k_i = \sqrt{1-h_i}$. By definition of $f$, $h=1$ for $|z-E_i|> 2\lambda$,   so the support of $k$ is of order $\OO(\lambda)$.

Moreover, for any $z$ we have  $h_i(z)\geq \varepsilon$
where (remember that $\ell_+$ is a smoothing of $\log$ on scale $N^{-1-\alpha}$)
\[
\eps := h_i(E_i)  = \frac{c_{\gamma}}{ ( N^{\kappa+\alpha})^{\gamma}} (1+\oo(1)).
\]

Let $\widetilde K$ be the kernel for independence between times $0$ and $t$, and $K$ the kernel we are interested in. Let $\mathcal{K}$, $\widetilde{\mathcal{K}}$ be the corresponding convolution kernels, namely $\mc{K} (r,x;s,y) = k_r(x) K(r,x;s,y) k_s(y)$.

The spectrum of $\widetilde{\mathcal{K}}$ is the union of the spectra of $\widetilde{\mathcal{K}_0}$ and $\widetilde{\mathcal{K}_1}$, the corresponding fixed time operators.
We have
$$
\E\left[\prod (1+x(h_0-1))(z_i(0))\right]=\det(\Id-x \widetilde{\mathcal{K}_0}).
$$
As $h_0 \geq \eps$, $x \mapsto 1+x(h_0-1) > 0$ on $[0, \frac{1}{1-\eps})$, and the left-hand side is $>0$ for any $x\in[0, \frac{1}{1-\eps})$,  we have $1-x\mu_i\neq 0$ for any $x\in[0, \frac{1}{1-\eps})$ and eigenvalue $\mu_i$ of $\widetilde{\mathcal{K}_0}$. We observe that $\widetilde{\mathcal{K}_0}$ is nonnegative so the spectrum of $\widetilde{\mathcal{K}_0}$ is in $[0,1-\eps)$, and the same property holds for $\widetilde{\mathcal{K}_1}$ and  $\widetilde{\mathcal{K}}$.

We now consider the Fredholm determinant of interest, i.e.
$$
\E\left[\prod h_0(z_i(0))\prod h_1(z_i(t))\right]=\det(\Id-\mathcal{K}).
$$
Since the entries of $\mc{K}$ are real-valued, we also have
$$
\E\left[\prod h_0(z_i(0))\prod h_1(z_i(t))\right]=\det(\Id-\mathcal{K}^*),
$$
so
$$
\E\left[\prod h_0(z_i(0))\prod h_1(z_i(t))\right]^2=\det((\Id-\mathcal{K})(\Id-\mathcal{K}^*)).
$$
Indeed, the multiplication rule for determinants $\det (I + A) \det (I + B) = \det ( (I+A)(I+B) )$  is justified for trace class operators $A$ and $B$. Here, this follows by an argument similar to the one of \cite[Proposition 2.4]{Joh2003}

We introduce the operators $\mathbf{K} := \mathcal{K}+\mathcal{K}^*-\mathcal{K}\mathcal{K}^*$ and $ \wt{\mathbf{K}} := \widetilde{\mathcal{K}}+\widetilde{\mathcal{K}^*}-\widetilde{\mathcal{K}}\widetilde{\mathcal{K}^*}$. Since $\widetilde{\mathcal{K}}$ is self-adjoint, the eigenvalues of $\wt{\mathbf{K}}$ are of type $\mu+\mu-\mu^2$ for $\mu\in [0,1-\eps)$, so its spectrum is included in $[0,1-\eps^2)$. 

\bigskip

\noindent {\it Second step: Hoffman-Wielandt inequality.} We know that if we order the eigenvalues $\lambda_i$ (resp. $\tilde\lambda_i$) of $\mathcal{K}+\mathcal{K}^*-\mathcal{K}\mathcal{K}^*$ (resp. $\widetilde{\mathcal{K}}+\widetilde{\mathcal{K}^*}-\widetilde{\mathcal{K}}\widetilde{\mathcal{K}^*}$) properly, we have by the Hoffman-Wielandt inequality (more precisely an infinite dimension version from \cite{Kato}, 
\begin{align*}
\sum|\lambda_i-\tilde\lambda_i|^2 \leq \|\mathbf{K}-\widetilde{\mathbf{K}}\|_{\rm HS}^2 & = \|(\mathcal{K}+\mathcal{K}^*-\mathcal{K}\mathcal{K}^*)-(\widetilde{\mathcal{K}}+\widetilde{\mathcal{K}^*}-\widetilde{\mathcal{K}}\widetilde{\mathcal{K}^*})\|_{\rm HS}^2\\
& \leq C \| \mc{K}-\wt{\mc{K}} \|_{\rm HS}^2 + C (\| \wt{\mc{K}} \|_{\rm HS}^2+\| \mc{K} \|_{\rm HS}^2) \| \mc{K} - \wt{\mc{K}} \|_{\rm HS}^2.
\end{align*}
We will use this and the estimates on the kernel to prove the following inequalities,
\begin{equation}
\label{eq:3rd-step}
\|\mc{K} \|_{\rm{HS}} = \OO(N^{\kappa}), \qquad
\|\mc{K} - \widetilde{\mc{K}} \|_{\rm{HS}} = \OO (N^{\kappa}/d), \qquad
\sqrt{\sum|\lambda_i-\tilde\lambda_i|^2} \leq \|\mathbf{K}-\widetilde{\mathbf{K}}\|_{\rm HS} = \OO(N^{2\kappa}/d).
\end{equation}
Let us mention an important consequence for what follows: this implies, for $N$ large enough, for any $i$, 
\begin{equation}
\label{eq:spectral-gap}
-1/2 \leq \lambda_i \leq 1 - \eps^2+ \OO(N^{2\kappa}/d) = 1 -\eps^2 + \oo(\eps^2), \qquad \text{when } N^{2\kappa}/d = \oo(\eps^2).
\end{equation}

For the first inequality, since $\lambda_i \geq \tilde{\lambda}_i - |\lambda_i - \tilde{\lambda}_i| \geq - |\lambda_i - \tilde{\lambda}_i|$,  $\sum_{\lambda_i<0}|\lambda_i|^2 \leq \sum_i |\lambda_i - \tilde{\lambda}_i|^2=\oo(1)$ when $N^{2\kappa} = \oo(d)$, which will be the case. So for large enough $N$,  for any $i$ we have $\lambda_i > -\frac{1}{2}$.  For the second one, for any $i$, $\lambda_i \leq \tilde{\lambda}_i+ |\lambda_i- \tilde{\lambda}_i|   \leq 1-\eps^2+ \OO(N^{2\kappa}/d)$.

Now, we prove these estimates. From \eqref{eq:kernel-diag} and since the size of the support of $k_i$  is $\OO(N^{-1+\kappa})$, we have $\int | k_i(x) K(0,x;0,y) k_i(y) |^2 dxdy \leq C \frac{(N^{-1+\kappa})^2}{N^2} N^2$ so $\| \wt{\mc{K}} \|_{\rm HS} = \OO( N^{\kappa})$.   

Furthermore $\| \mc{K} - \wt{\mc{K}} \|_{\rm HS} = \OO( N^{\kappa}/d)$ since Lemma \ref{lem:off-diagonal} gives a pointwise upper bound  $\OO(N/d)$ on the off-diagonal terms of the kernel and the size of the support is $\OO(N^{-1+\kappa})$. Therefore, we find $\|\mathbf{K}-\widetilde{\mathbf{K}}\|_{\rm HS} = \OO( N^{2\kappa}/d)$.

\bigskip

\noindent {\it Third step:  expansion of eigenvalues.} We will conclude by proving
\begin{equation}
\label{eq:decoupling-final}
\left|\log\widetilde \E\left[\prod h_1(z_i(0))\prod h_2(z_i(t))\right]-\log\E\left[\prod h_1(z_i(0))\prod h_2(z_i(t))\right]\right| =  \OO(\frac{N^{4\kappa}}{\eps \sqrt{d} }).
\end{equation} 
We bound from above the left-hand side above by expressing it with Fredholm determinants and using the following expansion of the logarithm for $\mathbf{K}$ (and similarly for $\widetilde{\mathbf{K}}$),
$$
\log \det((\Id-\mathcal{K})(\Id-\mathcal{K}^*)) = \log \det (\Id - \mathbf{K}) =-\sum_{j,\ell\geq 1}\frac{\lambda_j^\ell}{\ell}.
$$
Thus, we obtain with $m$ to be chosen,
$$
| \log \det (\Id - \wt{\mathbf{K}} ) - \log \det (\Id - \mathbf{K} ) |  \leq  \sum_{\ell=1}^m \left|{\rm Tr}({\mathbf{K}}^\ell)-{\rm Tr}({\widetilde{\mathbf{K}}}^\ell)\right|/\ell
+ \sum_{j \geq 1, \ell \geq m}\frac{|\lambda_j|^\ell+|\tilde{\lambda}_j|^\ell} {\ell}.
$$

First, we bound from above the contribution for $\ell \geq m$,
$$
\sum_{j \geq 1,\ell\geq m}\frac{|\lambda_j|^\ell}{\ell}   \leq \frac{1}{m} \sum_j \frac{|\lambda_j|^2}{1-|\lambda_j|}  \leq \frac{1}{m \eps^2} \| \mathbf{K}  \|_{\rm{HS}}^2 \leq \frac{N^{4\kappa}}{m \eps^2}
$$
where we used $\lambda_j\in[-1/2,1-\eps^2/2]$ in the second inequality.  We proceed similarly to bound the contribution of the $\tilde{\lambda}_j$'s.

Now we prove that the remaining term $ \sum_{\ell=1}^m \left|{\rm Tr}({\mathbf{K}}^\ell)-{\rm Tr}({\widetilde{\mathbf{K}}}^\ell)\right|/\ell =  m \OO (N^{4\kappa}/d)$. For $\ell=1$, we use
$$
| \Tr ( \mathbf{K} - \wt{\mathbf{K}}) | = | \Tr (\mc{K} \mc{K}^* -\wt{\mc{K}} \wt{\mc{K}}^* )| =|  \| \mc{K} \|_{\mathrm{HS}}^2 -  \| \wt{\mc{K}} \|_{\mathrm{HS}}^2| \leq \| \mc{K}- \wt{\mc{K}} \|_{\mathrm{HS}} ( \| \mc{K} \|_{\mathrm{HS}} +  \| \wt{\mc{K}} \|_{\mathrm{HS}})  = \OO(N^{2\kappa}/d).
$$
For $\ell \geq 2$, we write
$$
\left|{\rm Tr}({\mathbf{K}}^\ell)-{\rm Tr}(\widetilde{{\mathbf{K}}}^\ell)\right|/\ell\leq
\sum_i|\lambda_i^\ell-\widetilde\lambda_i^\ell|/\ell
\leq
\sum_i|\lambda_i-\widetilde\lambda_i|(|\lambda_i|^{\ell-1}+|\widetilde\lambda_i|^{\ell-1}).
$$
Then, by Cauchy-Schwarz we have
$$
\sum_i|\lambda_i-\widetilde\lambda_i| |\lambda_i|^{\ell-1}
\leq
(\sum_i|\lambda_i-\widetilde\lambda_i|^2 )^{1/2}
(\sum_i |\lambda_i|^{2\ell-2})^{1/2} \leq \|\mathbf{K}-\widetilde{\mathbf{K}}\|_{\rm HS} \|\mathbf{K}\|_{\rm HS} = \OO(N^{4\kappa}/d),
$$
where we used the Hoffman-Wielandt inequality, the fact that $|\lambda_j| \leq 1$  and $\ell\geq 2$ in the second inequality. The term with $|\tilde{\lambda}_i|$ can be bounded similarly. 

Altogether, the left-hand side in \eqref{eq:decoupling-final} is bounded  from above (up to a multiplicative constant) by $\frac{N^{4\kappa}}{m \eps^2}  + m \frac{N^{4\kappa}}{d}$. By taking $m = \sqrt{d}/\eps$, we get $\frac{N^{4\kappa}}{\eps \sqrt{d} }$.

\smallskip

{\it Conclusion.} We explain how we choose the parameters $\alpha,\kappa$, given $\delta,C$. Recalling $d = \max(\mu,\tau) \geq N^{\delta}$, $\eps \asymp N^{-\gamma(\alpha+\kappa)}$,  for any choice  $0\leq \alpha,\kappa\leq   \frac{\delta}{100 C}$ we have   $\frac{N^{2\kappa}}{\eps^2 d} = \oo(1)$ in \eqref{eq:spectral-gap} and $\frac{N^{4\kappa}}{\eps \sqrt{d} } = N^{-\delta/3}$ in the right-hand side of \eqref{eq:decoupling-final}.
\end{proof}

\section{Resolvent estimates}\label{sec:resolvent}

This section proves quantitative limits for the unitary analogue of the resolvent.  Some of our intermediate results are similar to existing local laws proved for random self-adjoint matrices (see e.g.  results and references from \cite[Chapter 6]{ErdYau2017}).  These resolvent estimates are the source of the almost optimal scales  in Theorem \ref{thm:FH}, and follow from a family of stochastic advection equations.
As explained in the following subsection,  dynamical methods for rigidity of the eigenvalues or bounds on eigenvectors have been increasingly important in random matrix theory.  We obtain for the first time optimal resolvent estimates  in both a {\it multi-time} and {\it full rank} setting, in the Proposition \ref{prop:avIso}. This is made possible thanks to (1)  Lemma \ref{lem:StocAdv} below which covers arbitrary projections of the resolvent,  (2) an iterative method to obtain first estimates  on eigenvalues,  then finite rank diagonal projections of the resolvent, then finite rank off-diagonal projections,  and finally full rank. 

The methods in this section could apply to some initial conditions out of equilibrium.  For the sake of simplicity we only consider dynamics close to equilibrium,  as this paper's main goal is showing a connection between random matrix dynamics and Liouville quantum gravity, not proving its universality.

\subsection{Stochastic advection equation for general observables}.
This subsection proves the stochastic advection equation for a generalization of the Borel transform
$$
m_t(z) = \frac{1}{N} \sum_k \frac{z+e^{\ii \theta_k(t)}}{z-e^{\ii \theta_k(t)}} = \frac{1}{N}\tr \left( \frac{z+U_t}{z-U_t} \right),
$$
which is defined, for any $N\times N$ deterministic matrix $A$,  as
\begin{equation}\label{eqn:obs}
m_{t,A}(z) =\tr \left( \frac{z+U_t}{z-U_t}\cdot A \right).
\end{equation}
The lemma below is instrumental for all results of this section.
\begin{lemma}\label{lem:StocAdv}
Under the unitary Brownian motion dynamics (\ref{eqn:Unitbis}), we have
$$
\rd m_{t,A}(z)=z m_t(z)\partial_z m_{t,A}(z)\rd t+2z {\rm Tr}\left(\frac{1}{z-U}A\frac{U}{z-U}\sqrt{2}\rd B_t\right).$$
\end{lemma}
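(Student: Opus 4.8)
The plan is to obtain the identity by It\^o's formula applied to the matrix-valued map $U\mapsto\tr\big(\tfrac{z+U}{z-U}A\big)$. First I would rewrite $\tfrac{z+U}{z-U}=-I+2z\,(z-U)^{-1}$, so that $m_{t,A}(z)=-\tr A+2z\,G(U_t)$ with $G(U):=\tr\big((z-U)^{-1}A\big)$; it then suffices to compute $\rd G(U_t)$. Writing $R=R(z):=(z-U)^{-1}$, and using $D_H(z-U)^{-1}=RHR$, the first two Fr\'echet derivatives of $G$ are $DG(U)[H]=\tr(RHRA)$ and $D^2G(U)[H,H]=2\,\tr(RHRHRA)$. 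I would feed these into It\^o's formula for the SDE (\ref{eqn:Unitbis}) written entrywise, $\rd U_t=\sqrt2\sum_k U_tX_k\,\rd\tilde B_t^k-U_t\,\rd t$ (equivalently one may invoke (\ref{eq:Ito-UBM}) directly with $\mc{L}_{X_k}$ and $\Delta_{U(N)}$); since the coefficient of the second-order term is $\tfrac12(\sqrt2)^2=1$, this gives $\rd m_{t,A}(z)=2z\big(\sqrt2\sum_k DG(U_t)[U_tX_k]\,\rd\tilde B_t^k-DG(U_t)[U_t]\,\rd t+\sum_k D^2G(U_t)[U_tX_k,U_tX_k]\,\rd t\big)$.

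For the martingale part, $\sqrt2\sum_k DG(U_t)[U_tX_k]\,\rd\tilde B_t^k=\sqrt2\sum_k\tr(RU_tX_kRA)\,\rd\tilde B_t^k$; using cyclicity of the trace, that $U_t$ commutes with $R$, and $\sqrt2\sum_k X_k\,\rd\tilde B_t^k=\sqrt2\,\rd B_t$, this equals $\tr\big(\tfrac{1}{z-U}A\tfrac{U}{z-U}\sqrt2\,\rd B_t\big)$, which after the outer factor $2z$ is exactly the stated martingale term.

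For the drift the computational heart is the Casimir-type identity $\sum_k X_kMX_k=-\tfrac1N(\tr M)\,I$ for the orthonormal basis $\{X_k\}$ of $\mathfrak{u}_N$ fixed in Subsection~\ref{subsec:notations} (the orthonormality being with respect to $N$ times $\langle\cdot,\cdot\rangle_{\mathfrak{R}}$); this follows from a direct evaluation on the matrix units $E_{ab}$, and with $M=I$ it yields $\sum_k X_k^2=-I$, which incidentally reconciles (\ref{eqn:Unitbis}) with the Stratonovich form $\rd U=\sqrt2\,U\circ\rd B$. Applying it to $\sum_k\tr(RU_tX_kRU_tX_kRA)$ (after a cyclic shift to isolate $X_k(RU_t)X_k$) collapses the second-order term to $-\tfrac2N\,\tr(RU_t)\,\tr(RU_tRA)$, so the drift equals $-2z\,\tr(RU_tRA)\big(1+\tfrac2N\tr(RU_t)\big)\,\rd t$. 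Two elementary resolvent identities then finish the proof: from $RU=-I+zR$ one gets $\tr(RU_t)=-N+z\tr R$, hence $1+\tfrac2N\tr(RU_t)=-1+\tfrac{2z}{N}\tr R=m_t(z)$; and differentiating $m_{t,A}(z)=-\tr A+2z\tr(RA)$ in $z$ with $\partial_z R=-R^2$ gives $\partial_z m_{t,A}(z)=2\tr(RA)-2z\tr(R^2A)=-2\,\tr(RU_tRA)$. Substituting both, the drift becomes $z\,m_t(z)\,\partial_z m_{t,A}(z)\,\rd t$, as claimed.

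I do not expect a genuine obstacle: every step is an algebraic identity in fixed dimension valid for $z$ off the spectrum of $U_t$, and the It\^o calculus is standard. The only point requiring real care is tracking the normalizations --- the $\sqrt2$ in (\ref{eqn:Unitbis}), the factor $N$ built into the inner product for which $\{X_k\}$ is orthonormal, and hence the exact constant $-1/N$ in the Casimir identity --- so that the drift constant comes out to precisely $z\,m_t\,\partial_z m_{t,A}$ rather than a rescaling of it, together with the repeated (harmless) use of the commutation of $U_t$ with its resolvent in the trace manipulations.
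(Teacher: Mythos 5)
Your proof is correct and follows essentially the same route as the paper: you apply the It\^o formula to the resolvent (after factoring out the affine part of $\tfrac{z+U}{z-U}$), invoke the same Lie-algebra identities $\sum_k X_k^2=-I$ and $\sum_k\tr(PX_kQX_k)=-N^{-1}\tr(P)\tr(Q)$, and close with the same resolvent algebra identifying the drift constants as $m_t(z)$ and $\partial_z m_{t,A}(z)$. Your reorganization around the scalar $G(U)=\tr((z-U)^{-1}A)$ and explicit Fr\'echet derivatives is a cosmetic repackaging of the paper's computation, not a different argument.
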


At equilibrium we have $\E(m_t(z))=\mathds{1}_{|z|>1}-\mathds{1}_{|z|<1}$, so from the above lemma  at leading order $m_{t,A}$ should be well approximated by the solution of the advection equation
\begin{equation}\label{eqn:sa}
\frac{\rd}{\rd t} f_t(z)=z(\mathds{1}_{|z|>1}-\mathds{1}_{|z|<1}) \partial_z f_t(z),
\end{equation}
which has characteristics
\begin{equation}\label{eqn:char}
z_t=ze^{t} \mathds{1}_{|z| > 1} + z e^{- t} \mathds{1}_{|z|<1}.
\end{equation}
In other words we expect $$m_{t,A}(z)\approx m_{0,A}(z_t).$$
It has been known since Pastur's work \cite{Pas1972} that the Stieltjes transform of the Hermitian Dyson Brownian motion satisfies an advection equation analogous to (\ref{eqn:sa}),  in the limit of large dimension.
More general resolvent dynamics corresponding to $A$ with rank one can be used for regularization and universality purpose, as proved first in
\cite{LeeSch2015}, for eigenvalues statistics at the edge of deformed Wigner matrices. For the same model,  \cite{Ben2017,SooWar2019} used stochastic advection equations and characteristics to understand the shape of bulk eigenvectors.  Moreover,  the stochastic complex Burgers equation for the Stieltjes transform extends to general $\beta$-ensembles and allows to prove rigidity of the particles \cite{HuaLan2019,AdhHua2018},  also through regularization along the characteristics.   
For a general class of discrete particle systems,  analogues of the Stieltjes transform were also recently shown to satisfy equations of type (\ref{eqn:sa}) \cite{GorHua2022}. 

More directly relevant to our model,  the unitary Brownian motion,  complex Burgers equation for the Borel transform were first shown by Biane \cite{Bia1997,Bia1997II},  and they are instrumental in Adhikari and Landon's recent result on optimal location of eigenvalues out of equilibrium,  starting at identity \cite{AdhLan2023}.

While most of these works focus on the trace of the resolvent,  Lemma \ref{lem:StocAdv} considers general full-rank projections observables: it covers the Stieltjes transform (i.e.  $A={\rm Id}$ below, used in Proposition \ref{prop:rig}),  one-dimensional projections (i.e.  $A=qq^*$, used in Proposition \ref{prop:ani1}),  and a full-rank $A$ is needed for the proof of Proposition \ref{prop:avIso},  a main estimate towards Theorem \ref{thm:FH}.

\begin{proof}[Proof of Lemma \ref{lem:StocAdv}] Recall the definition of the skew Hermitian Brownian motion in \eqref{def:shbm}.  From It{\^o}'s formula \eqref{eq:Ito-UBM},  we have
\begin{align*}
\rd \frac{z+U}{z-U} & = 2 z \rd \frac{1}{z-U} \\
& =  2 z \sum_k  \frac{1}{z- U } \sqrt{2}U X_k \rd \tilde{B}_t^k  \frac{1}{z- U }  + 2z \left( \sum_k \frac{1}{z-U} U X_k^2 \frac{1}{z-U} + 2 \frac{1}{z-U} U X_k \frac{1}{z-U} U X_k \frac{1}{z-U} \right)\rd t\\
& = 2 z \frac{1}{z- U } U \sqrt{2}\rd B_t  \frac{1}{z- U } - 2z \frac{U}{(z-U)^2} \rd t+ 4 z \sum_k \frac{1}{z-U} U X_k \frac{1}{z-U} U X_k \frac{1}{z-U}\rd t.
\end{align*}
We have used
$\sum_{k=1}^{N^2} X_k^2  = -{\rm Id}$.
This implies (we use that for any two complex valued matrices $P$ and $Q$,
$\sum_{k=1}^{N^2} \tr (P X_k Q X_k)   = -N^{-1} \tr (P) \tr (Q)$)
\begin{align*}
\rd{\rm Tr}\left(\frac{z+U}{z-U} A\right)&=2z {\rm Tr}\frac{1}{z-U}A\frac{U}{z-U}\sqrt{2}\rd B-2z{\rm Tr}\frac{U}{(z-U)^2}A\rd t-4zN^{-1}{\rm Tr}\frac{U}{(z-U)^2}A\,{\rm Tr}\frac{U}{z-U}\rd t\\
&=2z {\rm Tr}\frac{1}{z-U}A\frac{U}{z-U}\sqrt{2}\rd B+2zN^{-1}\partial_z{\rm Tr}\frac{z+U}{z-U}A\,{\rm Tr}\frac{{\rm Id}}{2}\rd t+2zN^{-1}\partial_z{\rm Tr}\frac{z+U}{z-U}A\,{\rm Tr}\frac{U}{z-U}\rd t.
\end{align*}
As $\frac{{\rm Id}}{2}+\frac{U}{z-U}=\frac{1}{2}\frac{z+U}{z-U}$, we obtain the expected result.
\end{proof}

\subsection{Rigidity}.\ 
\label{subsec:rig} 
The following parameters
\begin{equation}
\label{eq:phi-delta}
\varphi=e^{(\log \log N)^2}, \Delta=(\log N)^2
\end{equation}
will often be used in this section, and so will be the notation
$$
\eta_v=||v|-1|.
$$
We order $0\leq\theta_1(s)\leq\dots\leq \theta_N(s)\leq 2\pi$ and for any $t\geq s$ we define $\theta_1(t)\leq\dots\leq \theta_N(t)$ by continuity. 
We consider $\gamma_k=\frac{2\pi k}{N}$ and the following good sets, 
$$
\mathscr{G}=\bigcap_{1\leq k\leq N}\left\{|\theta_k-\gamma_k|\leq \frac{\varphi^{6}}{N}\right\},\ \widetilde{\mathscr{G}}=\bigcap_{1\leq k\leq N}\left\{|\theta_k-\gamma_k|\leq \frac{\varphi}{N}\right\}.
$$
We also denote $\btheta(t)=(\theta_1(t),\dots,\theta_N(t))$.  The proposition below is a unitary analogue of
classical rigidity results for Hermitian random matrices, see \cite{ErdYau2017} and references therein.

\begin{proposition}\label{prop:rig} For any $D>0$ there exists $N_0$ such that for any $N\geq N_0$ we have
$$
\mathbb{P}\left(\bigcap_{s\leq t\leq s+\Delta}\{\btheta(t)\in\mathscr{G}\}\mid \btheta(s)\in\widetilde{\mathscr{G}}\right)\geq 1-e^{-(\log N)^D}.
$$
\end{proposition}

\begin{proof} The proof will proceed through (1) resolvent estimate at fixed space and time,  (2) uniform extension to  any time and mesoscopic scales,  (3) extension to submicroscopic scales,  (4) rigidity of gaps between eigenvalues, (5) rigidity of positions.\\

\noindent {\it First step: resolvent estimate.}\
We choose $A={\rm Id}/N$ in Lemma  \ref{lem:StocAdv},  which gives (in this section we define new independent Brownian motions through $\rd\hat B_{jk}(s)=(P_s\rd B(s)P_s^*)_{jk}$  with $P_s$ unitary diagonalizing $U_s$,  $U_s=P_sD_sP_s^*$, and abbreviate $\hat B_{j}=\hat B_{jj}$)
\begin{equation}\label{eqn:sae}
\rd m_t(z) =m_t(z) z \partial_z m_t(z)\rd t+ \frac{2\sqrt{2}\ii z}{N^{3/2}}\sum_{k=1}^N\frac{z_k(t)}{(z-z_k(t))^2}\rd\hat B_k(t).
\end{equation}
The following implementation of invariance along characteristics in this first step is similar to the proof of \cite[Theorem 1.2]{AdhLan2023}.

Without loss of generality we assume $s=0$ and we first consider some $|z|\in[1+\varphi^{8/5}/N,2]$.  Equations (\ref{eqn:sae}) and (\ref{eqn:char}) imply
\begin{equation}\label{eqn:afterChar}
\rd m_u(z_{t-u})= (m_u(z_{t-u})-1)z_{t-u}\partial_z m_{ u}(z_{t-u})\rd u+ \frac{2\sqrt{2}\ii z_{t-u}}{N^{3/2}}\sum_{k=1}^N\frac{z_k(u)}{(z_{t-u}-z_k(u))^2}\rd\hat B_k(u).
\end{equation}
We consider the stopping time (with respect to the filtration generated by $\hat B_1,\dots,\hat B_N$)
\begin{align}\label{eqn:tau}
\tau=\inf\left\{u\in[0,t]: |m_{u}(z_{t-u})-1|\geq \frac{\varphi^{3/2}}{N\eta_{z_{t-u}}} \right\}\wedge t
\end{align}
with the convention $\inf\varnothing=+\infty$.
We also abbreviate
$$
M(s)=\int_0^{s} \frac{2\sqrt{2}\ii z_{t-u}}{N^{3/2}}\sum_{k=1}^N\frac{z_k(u)}{(z_{t-u}-z_k(u))^2}\rd\hat B_k(u).
$$
From 
\begin{equation}\label{eqn:basic}
\frac{{\rm Re}\ m_u(z)}{|z|^2-1} = \frac{1}{N} \sum_j\frac{1}{|z-z_j(u)|^2},
\end{equation}
the quadratic variation of the martingale $(M(s\wedge\tau))_s$ (the sum of the quadratic variations of its real and imaginary parts) is bounded at time $s$ with 
\begin{align}
\frac{1}{N^3}\int_0^{s}\sum_{i} \frac{|z_{t-u}|^2\rd u}{|z_i(u)-z_{t-u}|^4}
\leq \frac{C}{N^2}\int_0^{s}\frac{|z_{t-u}|^2{\rm Re}(m_u(z_{t-u}))}{(1+|z_{t-u}|)\eta_{z_{t-u}}^3}\rd u
\leq \frac{C}{N^2\eta_{z_{t-s}}^2},
\end{align}\label{eqn:quadr}
where we have used $|{\rm Re}(m_u(z_{t-u}))-1|=\oo(1)$ because $u\leq \tau$ and $\eta_z>\varphi^{8/5}/N$. 
This classically implies (see e.g.  \cite[Appendix B.6,  equation (18)]{ShoWel1986}) that for any $D>0$ there exists $N_0$ such that $\mathbb{P}(\cap_{0\leq s\leq t}\{|M_{s\wedge\tau}|<\frac{\varphi^{1/20}}{N\eta_{z_{t-s}}}\} ) \geq 1-e^{-(\log N)^D}$ for any $N>N_0$.  More precisely we have $\mathbb{P}(|M_{s\wedge\tau}|<\frac{\varphi^{1/20}}{N\eta_{z_{t-s}}})\geq 1-e^{-2(\log N)^{D}}$ and uniformity in time follows from a grid argument similar to the second step of this proof detailed below.

On the event $\cap_{0\leq s\leq t}\{|M_{s\wedge\tau}|<\frac{\varphi^{1/20}}{N\eta_{z_{t-s}}}\}$, which has overwhelming probability, for any $s\leq t \wedge \tau$ from
(\ref{eqn:afterChar}) we have, denoting $h(s)=m_{s\wedge\tau}(z_{t-s\wedge\tau})-1$,
$$
|h(s)|\leq 
\int_{0}^{s}|z_{t-u}|\cdot|h(u)|\cdot |\partial_z m_{u}(z_{t-u})|\rd u+\frac{\varphi^{1/20}}{N\eta_{z_{t-s}}}+\frac{\varphi^{11/10}}{N\eta_{z_{t-s}}}$$
where we have used $|m_0(z)-1|\leq \varphi^{11/10}/(N\eta_z)$ by Riemann sum approximation because $\btheta(0)\in\widetilde{\mathscr{G}}$. Together with $|\partial_zm|\leq 2({\rm Re}m)\cdot(|z|^2-1)^{-1}$ from (\ref{eqn:basic}), this implies
$$ 
|h(s)|\leq 
\int_{0}^{s}|h(u)|\cdot \frac{|z_{t-u}|2{\rm Re}\ m_{u\wedge\tau}(z_{t-u\wedge\tau})}{(|z_{t-u}|^2-1)}\rd u+C\frac{\varphi^{11/10}}{N\eta_{z_{t-s}}}
\leq 
\left(1+\varphi^{-1/10}\right)\int_{0}^{s}\frac{|h(u)|}{\log|z_{t-u}|}\rd u+C\frac{\varphi^{11/10}}{N\eta_{z_{t-s}}},
$$
where we successively relied on the inequalities $x/(x^2-1)<1/(2\log x)$ for $x>1$,
$|{\rm Re}(m_u(z_{t-u}))-1|\leq \frac{\varphi^{3/2}}{N\eta_{z_{t-u}}}\leq \varphi^{-1/10}$ for $u\leq \tau$ and $\eta_z>\varphi^{8/5}/N$.  The integral form of Gronwall lemma then implies 
$$
|h(s)|\leq C\frac{\varphi^{11/10}}{N\eta_{z_{t-s}}}+C\varphi^{11/10}\int_0^s\frac{1}{N\eta_{z_{t-u}} \log |z_{t-u}|}e^{(1+\varphi^{-1/10})\int_u^s\frac{\rd r}{\log|z_{t-r}|}}\rd u.
$$
The antiderivative of $({\log|z_{t-r}|})^{-1}$ is $\log\log|z_{t-r}|$, so
$\exp(\int_u^s\frac{\rd r}{\log|z_{t-r}|})=\frac{\log|z_{t-u}|}{\log|z_{t-s}|}$. Moreover for our parameters we always have $(\frac{\log|z_{t-u}|}{\log|z_{t-s}|})^{\varphi^{-1/10}}\leq C$.  Thus we have obtained
$$|h(s)|\leq C \frac{\varphi^{11/10}}{N\eta_{z_{t-s}}}+C\varphi^{11/10}\int_0^s\frac{1}{N\eta_{z_{t-u}} \log |z_{t-s}|}\rd u\leq \frac{\varphi^{12/10}}{N\eta_{z_{t-s}}}.$$
This proves that 
$$
\mathbb{P}\left(|m_{\tau}(z_{t-\tau})-1|>\frac{\varphi^{3/2}}{N\eta_{z_{t-\tau}}} \right)\leq e^{-(\log N)^D}.
$$
By definition of $\tau$ this implies $\mathbb{P}(\tau=t)\geq 1-e^{-(\log N)^D}$, and therefore there exists $N_0$ such that for any $N>N_0$,  $1+\varphi^{8/5}/N<|z|<2$ and $0<t<\Delta$,
\begin{equation}\label{eqn:fixed point}
\mathbb{P}\left(|m_{t}(z)-1|>\frac{\varphi^{16/10}}{N\eta_z} \right)\leq e^{-(\log N)^D}.
\end{equation}

\noindent{\it Second step: Uniformity in space and time.}\ 
Let $D>0$ be fixed and large,  $M=e^{10(\log N)^D}$, $(z_i)_{1\leq i\leq M}$ (resp. $(t_j)_{1\leq j\leq M}$) be points in $|z|\in[1+\varphi^{8/5}/N,2]$ (resp. $[0,\Delta]$) such that for any such $|z|\in[1+\varphi^{8/5}/N,2]$ there exists $z_i$ with $|z-z_i|\leq N^{-4}$ 
(resp. $0=t_1<\dots<t_{M}=\Delta$,  $|t_{j+1}-t_j|\leq e^{-5(\log N)^D}$).  Then by union bound in  (\ref{eqn:fixed point}),   there exists $N_0$ such that for $N\geq N_0$ we have
\begin{equation}\label{eqn:fixed point2}
\mathbb{P}\left(\cap_{1\leq i,j\leq M}\{|m_{t_j}(z_i)-1|<\frac{\varphi^{17/10}}{N\eta_{z_i}} \}\right)\geq 1- e^{-2(\log N)^D}.
\end{equation}
Moreover, for any fixed $z_i$ and $t_j$,  a bracket calculation and again, for example \cite[Appendix B.6,  equation (18)]{ShoWel1986}), imply
\begin{equation}\label{eqn:incrementbound}
\mathbb{P}\left(\max_{t_j<t<t_{j+1}}|m_t(z_i)-m_{t_j}(z_i)|>N^{-3} \right)\leq e^{-100(\log N)^D}.
\end{equation}
Equations (\ref{eqn:fixed point2}), (\ref{eqn:incrementbound}) and a union bound give existence of $N_0$ such that, for $N\geq N_0$,
\begin{equation}\label{eqn:timeunif}
\mathbb{P}\left(\cap_{1\leq i\leq M,0<t<\Delta}\{|m_{t}(z_i)-1|<\frac{\varphi^{18/10}}{N\eta_{z_i}}\} \right)\geq 1 - e^{-(\log N)^D}.
\end{equation}
The function $z\mapsto m_{t}(z)$ is deterministically $N^2$-Lipschitz for $|z|>1+\varphi^{8/5}/N$.  Therefore from the previous equation, for some $N_0$,  $N>N_0$ implies
\begin{equation}\label{eqn:proba}
\mathbb{P}\left(\bigcap_{
\substack{1+\varphi^{8/5}/N<|z|<2},0<t<\Delta}
\{|m_{t}(z)-1|\leq \frac{\varphi^{19/10}}{N\eta_z}\}\right)\geq 1-e^{-(\log N)^D}.
\end{equation}

\noindent{\it Third step: Extension below microscopic scales.}\ 
We now consider $|z|\in[1,1+\varphi^{2}/N]$.  Let $z'$ have the same argument as $z$ and $\eta_{z'}=\varphi^2/N$.  The following always holds,  for some universal $C$:
\begin{equation}\label{eqn:monotonic}
{\rm Re}\,m_t(z)\leq C \frac{\eta_{z'}}{\eta_z}{\rm Re}\,m_t(z').
\end{equation}
Therefore, for any arc $I$ of length at most $\varphi^2/N$ centered at $|w|=1$, denoting $w_I=w(1+|I|)$,  under the event considered in (\ref{eqn:proba}) we have 
$$
\sum\mathds{1}_{z_i(t)\in I}
\leq C\sum_{i}\frac{\eta_{w_I}^2}{|w_I-z_i(t)|^2}
\leq C N\eta_{w_I} {\rm Re}m_t(w_I)\leq C N\eta_{w_I}\frac{\eta_{w'}}{\eta_{w_I}}{\rm Re}m_t(w')\leq C\varphi^2{\rm Re}m_t(w')\leq C\varphi^2,
$$
and, denoting $\eta_k=e^k\eta_z$, $\tilde{z}_k = z \frac{1+\eta_k}{1+\eta_z}$ so that  $\eta_{\tilde{z}_k} = \eta_k$,  we obtain by using $|\arg (z) - \arg (z_i)| \leq C |z - z_i|$ in the first inequality and $-\log (\varphi^2/N) \leq \varphi \leq \varphi^3/(N\eta_z)$ in the last one (with $k\geq 0$ in all series below),
\begin{multline}
| {\rm Im} m_t(z) |
\leq \frac{C}{N} \sum_i\frac{|{\rm arg}z-{\rm arg}z_i|}{|z-z_i|^2}
\leq  \frac{C}{N} \sum_{k \geq 0, e^k \eta_z \leq |z-z_i| \leq e^{k+1} \eta_z } \frac{1}{|z-z_i|} \\
\leq\frac{C}{N} \sum_{k \geq 0} \frac{\big|\{|z-z_i| \leq \varphi^2/N\}\big|}{e^k\eta_z }   + \frac{C}{N} \sum_{\varphi^2/N \leq e^k \eta_z \leq 1, i} \frac{\eta_k}{|\tilde{z}_k-z_i|^2} \leq 
\frac{C\varphi^2}{N\eta_z}+C\sum_{\varphi^2/N\leq e^k\eta_z\leq 1}{\rm Re } m_t(\tilde{z}_k)\leq \frac{\varphi^{3}}{N\eta_z}.\label{eqn:monotonic2}
\end{multline}
From (\ref{eqn:monotonic}) with $|\re m_t(z)-1| \leq 1 + \re m_t(z)$, (\ref{eqn:monotonic2}) and their analogue for $1/2<|z|<1$,  (\ref{eqn:proba}) extends into (we denote $s(z)=\mathds{1}_{|z|>1}-\mathds{1}_{|z|<1}$)
\begin{equation}\label{eqn:proba2}
\mathbb{P}\left(\bigcap_{
\substack{1/2<|z|<2},0<t<\Delta}
\{|m_{t}(z)-s(z)|\leq \frac{\varphi^{4}}{N\eta_z}\}\right)\geq 1-e^{-(\log N)^D}.
\end{equation}
\noindent{\it Fourth step: Rigidity of gaps.}\ The inclusion 
\begin{equation}\label{eqn:gaps}
\bigcap_{
\substack{1/2<|z|<2},0<t<\Delta}
\{|m_{t}(z)-m_0(z_{t})|\leq \frac{\varphi^{4}}{N\eta_z}\}\subset \bigcap_{\substack{0\leq t\leq \Delta\\1\leq i<j\leq N}}\{|\theta_i(t)-\theta_j(t)-(\gamma_i-\gamma_j)|\leq \frac{\varphi^{5}}{N}\}
\end{equation}
holds for large enough $N$ thanks to the following classical argument based on the Helffer-Sj{\"o}strand formula (\ref{eqn:HS-rep}).  
Here we follow  \cite[Section 6.2]{AdhLan2023}.
Let $g(z)=1$ for $\arg z\in[\gamma_i+\varphi^{4}/N,\gamma_j-\varphi^{4}/N]$,  $g(z)=0$ for $\arg z\in[\gamma_i,\gamma_j]^{\rm c}$, and $|g'|\leq C N / \varphi^{4}, |g''|\leq C(N / \varphi^{4})^2$. We pick $\chi$ from (\ref{eqn:HS-rep}) on scale 1.  On the set from  the left-hand side of (\ref{eqn:gaps}), we have,
denoting $s(z)=m_t(z)-(\mathds{1}_{|w|>1}-\mathds{1}_{|w|<1})$,
\begin{multline*}
\sum g(z_i(t))=\frac{N}{2\pi}\int_{\mathbb{C}}\partial_{\bar w}\tilde g(w) m_t(w)\frac{\rd m(w)}{w}=\frac{N}{2\pi}\int_{\mathbb{C}}\partial_{\bar w}\tilde g(w) (\mathds{1}_{|w|>1}-\mathds{1}_{|w|<1})\frac{\rd m(w)}{w}+
\int_{\mathbb{C}}\partial_{\bar w}\tilde g(w) s(w)\frac{\rd m(w)}{w}\\
=N\int g(e^{\ii\theta})\frac{\rd\theta}{2\pi}+\int_{\mathbb{C}}\partial_{\bar w}\tilde g(w) s(w)\frac{\rd m(w)}{w}.
\end{multline*}
As $\btheta(0)\in\widetilde{\mathscr{G}}$, we have $m_0(w_{t})=(\mathds{1}_{|w|>1}-\mathds{1}_{|w|<1})+\OO(\frac{\varphi}{N\eta_{w_{t}}})$
so on the set from  the left-hand side of (\ref{eqn:gaps}), $s(z)=\OO(\frac{\varphi}{N\eta_{w_{t}}})$.
Together with the decomposition (\ref{eqn:dbar}) this gives
\begin{multline*}
\sum g(z_i(t))=N\int g(e^{\ii\theta})\frac{\rd\theta}{2\pi}+
\OO(\varphi^{4})\cdot\int(|g(e^{\ii\theta})|+|g'(e^{\ii\theta})|)\cdot |\chi'(r)|r\rd r\rd\theta+2\int_{\theta,r>1} g''(e^{\ii\theta}){\rm Re}s(re^{\ii\theta})\chi(r)\log r\rd r\rd\theta\\
=N\int g(e^{\ii\theta})\frac{\rd\theta}{2\pi}+
\OO(\varphi^{4})+2\int_{\theta,r>1} g''(e^{\ii\theta}){\rm Re}s(re^{\ii\theta})\chi(r)\log r\rd r\rd\theta
\end{multline*}
With an integration by parts as in \cite[Equation (6.20)]{AdhLan2023}, the remaining integral term is also $\OO(\varphi^{4})$.

Similarly we  have $\sum h(z_i(t))=\frac{N}{2\pi}\int h+\OO(\varphi^{4})$  where $h$ has the same regularity as $g$ and $h(z)=1$ for $\arg z\in[\gamma_i,\gamma_j]$,  $h(z)=0$ for $\arg z\in[\gamma_i-\varphi^{4}/N,\gamma_j+\varphi^{4}/N]^{\rm c}$.  These estimates  on $\sum g(z_i(t))$ and $\sum h(z_i(t))$
prove (\ref{eqn:gaps}), which together with
 (\ref{eqn:proba}) gives
\begin{equation}\label{eqn:proba3}
\mathbb{P}\left(
\bigcap_{\substack{0\leq t\leq \Delta\\1\leq i<j\leq N}}\{|\theta_i(t)-\theta_j(t)-(\gamma_i - \gamma_j)|\leq \frac{\varphi^{5}}{N}\}
\right)\geq 1-e^{-(\log N)^D}.
\end{equation}

\noindent{\it Fifth step: rigidity of positions.}\ Let $\bar\theta(t)=\sum_i\theta_i (t)$.  Then \eqref{eqn:Dyson} gives
$
\rd \bar{\theta}(t)= \sum_{j} \sqrt{\frac{2}{N}} \rd B_j(t)= \sqrt{2} \rd B(t)
$
where $B$ is a standard Brownian motion. This implies that for any $D>0$ there exists $N_0$ such that for $N\geq N_0$
\begin{equation}\label{eqn:proba4}
\mathbb{P}\left(\cap_{0<t<\Delta} \{ |\bar\theta(t)-\bar\theta(0)|  \leq \varphi \} \right)\geq 1-e^{-(\log N)^D}.
\end{equation}
We now write
$$
\theta_i(t)-\gamma_i=\frac{1}{N}\sum_{j=1}^N\left((\theta_i(t)-\theta_j(t))-(\gamma_i-\gamma_j)\right)+\frac{1}{N}\sum_{j=1}^N(\theta_j(t)-\theta_j(0))+\frac{1}{N}\sum_{j=1}^N(\theta_j(0)-\gamma_j).
$$
With probability $1-e^{-(\log N)^D}$, the following holds.  For all $i$ and $t\in [0,\Delta]$
the first term is at most $\varphi^{5}/N$ (from (\ref{eqn:proba3})),  the second is at most  $\varphi/N$ (by  (\ref{eqn:proba4})),  and the last one is at most $C\varphi/N$ because $\btheta(0)\in{\tilde{\mathscr{G}}}$. This concludes the proof.
\end{proof}

\subsection{Finite rank projections}.\ 
The result below shows the following:  eigenvectors perturbations under mean field noise are simply given at the level of the resolvent by moving the spectral parameter through the characteristics.  It is a simple analogue of \cite[Theorem 2.1]{BouHuaYau2017}, which considers Hermitian perturbations out of equilibrium,  but our dynamical proof is different from \cite{BouHuaYau2017}, which  proceeds through the Schur complement formula.
Such estimates on arbitrary (finite rank) projections of the resolvent first appeared in the context of Wigner and covariance matrices, see e.g.  \cite{BloErdKnoYauYin} and references therein.

\begin{proposition}\label{prop:ani1}For any $D>0$ there exists $N_0$ such that for any $N\geq N_0$ and $q\in\mathbb{C}^N$ $\mathscr{F}_s$-measurable ($\mathscr{F}_s=\sigma(U_u,u\leq s)$), $|q|=1$, we have
$$
\mathbb{P}\left(\bigcap_{\substack{s<t<s+\Delta\\
|\eta_z|>\varphi^{20}/N}}\{
\left|\langle q, \frac{z+U_t}{z-U_t}q\rangle-\langle q, \frac{z_{t-s}+U_s}{z_{t-s}-U_s}q\rangle\right|\leq \frac{\varphi}{\sqrt{N\eta_z}}{\rm Re}\langle q, \frac{z_{t-s}+U_s}{z_{t-s}-U_s}q\rangle\}
\mid \btheta(s)\in{\mathscr{G}}\right)\geq 1-e^{-(\log N)^D}.
$$
\end{proposition}
Note that the above real part is always positive.

\begin{proof}
We choose $A=qq^*$ in Lemma  \ref{lem:StocAdv}.  This gives 
$$
\rd q_t(z) = m_t(z) z \partial_z q_t(z)+ 2 z q^* \frac{U}{z-U} \sqrt{2}\rd B_t\frac{1}{z-U}q \ {\rm where }\ q_t(z) = \langle q, \frac{z+U_t}{z-U_t}q\rangle.
$$
We can assume $s=0$
and first consider some $|z|>1+\varphi^{20}/N$.  Then, with orthonormal eigenvectors $u_j(s)$ diagonalizing $U_s$,
\begin{equation}\label{eqn:afterChar2}
\rd q_u(z_{t-u})= (m_u(z_{t-u})-1)z_{t-u}\partial_z q_u(z_{t-u})\rd u+ \frac{2\sqrt{2}\ii z_{t-u}}{N^{1/2}}\sum_{k,j}q^*u_j(u)\frac{z_j(u)}{z_{t-u}-z_j(u)}\rd {\hat B}_{jk}(u)\frac{1}{z_{t-u}-z_k(u)}{u_k(u)^*q},
\end{equation}
where the $\hat B_{jk}$ are independent Brownian motions defined before (\ref{eqn:sae}).
We define the stopping times
\begin{align}\label{eqn:tauq}
\tau_q:&=\inf\left\{u\in[0,t]: |q_0(z_t)-q_u(z_{t-u})|> \frac{\varphi^{1/10}}{\sqrt{N\eta_{z_{t-u}}}}\,{\rm Re}\, q_0(z_{t})\right\},\\
\tau:&=\inf\left\{u\in[0,t]: \exists k\in\llbracket 1,N\rrbracket, |\theta_k(u)-\gamma_k|>\frac{\varphi^8}{N}\right\},\\
\sigma:&=\tau\wedge\tau_q.
\end{align}
The quadratic variation of the martingale term in (\ref{eqn:afterChar2}) stopped at $\sigma$ is bounded with
\begin{align}
\frac{C}{N}\int_0^{\sigma}
\sum_{j,k}|z_{t-u}|^2\frac{|\langle q,u_j(u)\rangle|^2}{|z_{t-u}-z_j(u)|^2}\cdot\frac{|\langle q,u_k(u)\rangle|^2}{|z_{t-u}-z_k(u)|^2}\rd u
\leq \frac{C}{N}\int_0^{\sigma}\frac{|z_{t-u}|^2({\rm Re}(q_u(z_{t-u})))^2}{(1+|z_{t-u}|)^2\eta_{z_{t-u}}^2}\rd u
\leq C\frac{{\rm Re}(q_0(z_{t}))^2}{N\eta_{z_{t-\sigma}}},
\end{align}
where we have used
$
\sum_j\frac{|\langle q,u_j(u)\rangle|^2}{|z-z_j(u)|^2}=\frac{1}{|z|^2-1}{\rm Re}\ q_u(z).
$
Similarly to the estimate after (\ref{eqn:quadr}),  this implies that this martingale term is bounded with $\frac{\varphi^{1/10}}{\sqrt{N\eta_{z_{t-\sigma}}}}\,{\rm Re}\, q_0(z_{t})$ with probability $1-e^{-(\log N)^D}$.
Moreover,  the finite variation error term from (\ref{eqn:afterChar2}) is bounded with
$$
\int_{0}^{\sigma}|z_{t-u}|\cdot|m_u(z_{t-u})-1|\cdot |\partial_z q_u(z_{t-u})|\rd u\leq C
\int_{0}^{\sigma}\frac{\varphi^8|z_{t-u}|}{N\eta_{z_{t-u}}}\cdot \frac{{\rm Re}\ q_u(z_{t-u})}{\eta_{z_{t-u}}(1+|z_{t-u}|)}\rd u\leq 
\frac{C\varphi^8\,{\rm Re}\, q_0(z_{t})}{N\eta_{z_{t-\sigma}}}\leq \frac{C\,{\rm Re}\, q_0(z_{t})}{\sqrt{N\eta_{z_{t-\sigma}}}},
$$
where we have first used that for $u<\tau$ we have $m_u(z)-1=\OO(\varphi^8/(N\eta_z))$, and finally we have used $|\eta_{z_{t-\sigma}}|>\varphi^{20}/N$.
We have therefore proved that for any $D>0$ there is a $N_0$ such that for $N\geq N_0$ and $|z|>1+\varphi^{20}/N$ we have
$$
\mathbb{P}\left(|q_{\sigma}(z_{t-\sigma})-q_0(z_t)|>\frac{\varphi^{1/10}}{\sqrt{N\eta_{z_t-\sigma}}}{\rm Re}q_0(z_{t})\right)\leq e^{-(\log N)^D}.
$$
By definition of $\tau_q$ this implies $\mathbb{P}(\sigma=\tau)\geq 1-e^{-(\log N)^D}$. Moreover, from Proposition \ref{prop:rig},  $\mathbb{P}(\tau=t)\geq 1-e^{-(\log N)^D}$ (this proposition naturally also holds when replacing exponents $\varphi,\varphi^6$ defining $\mathscr{G}, {\tilde{\mathscr{G}}}$ with $\varphi^6,\varphi^8$), so we have proved
$$
\mathbb{P}\left(|q_{t}(z)-q_0(z_t)|>\frac{\varphi^{1/9}}{\sqrt{N\eta_{z_{t-\sigma}}}}{\rm Re}q_0(z_{t})\right)\leq e^{-(\log N)^D}.
$$
Uniformity in $t\in[0,\Delta]$ and $\eta_z\in[\varphi^{20}/N,1/2]$ follows easily by a grid argument similar to the second step in the proof of Proposition \ref{prop:rig}.

Finally,  for uniformity in $\eta_z>1/2$, denote $f(z)=\langle q, \frac{z+U_t}{z-U_t}q\rangle$, $g(z)=\langle q, \frac{z_t+U_0}{z_t-U_0}q\rangle$. We have proved that with overwhelming probability 
$|\frac{f}{g}(z)-1|\leq \frac{\varphi^{1/9}}{\sqrt{N\eta_z}}$. As $f/g-1\to 0$ as $|z|\to\infty$, the Cauchy integral formula for $z$ outside the contour $|w|=6/5$ gives, for $|z|>7/5$,  $f/g(z)-1=\OO(\frac{\varphi^{1/9}}{|z|\sqrt{N}})$, which concludes the proof.
\end{proof}

Polarization in Proposition \ref{prop:ani1} shows that if $u_a(s),u_b(s)$ are normalized eigenvectors of $U(s)$ and $a\neq b$, then for $|z|>1+\varphi^{20}/N$ we have
$$
|\langle u_a(s),\frac{z+U_t}{z-U_t}u_b(s)\rangle|\leq \varphi \frac{\eta_{z_{t-s}}(|z_{t-s}|+1)}{\sqrt{N\eta_{z}}}
\left(\frac{1}{|z_{t-s}-z_{a}(s)|^2}+
\frac{1}{|z_{t-s}-z_{b}(s)|^2}\right)
$$
with overwhelming probability. This error term is not enough for Proposition \ref{prop:avIso} in the next subsection,  so we first obtain the following essentially optimal bound.

\begin{proposition}\label{prop:offDiag}For any $D,\varepsilon>0$ there exists $N_0$ such that for any $N\geq N_0$ and $u_a(s)$ $u_b(s)\in\mathbb{C}^N$  eigenvectors of $U(s)$ associated to distinct eigenvalues ($|u_a|=|u_b|=1$) we have
$$
\mathbb{P}\left(\bigcap_{\substack{s<t<s+\Delta\\
\eta_z>N^\varepsilon/N}}\{
|\langle u_a(s),\frac{z+U_t}{z-U_t}u_b(s)\rangle|\leq \frac{\eta_{z_{t-s}}(1+|z_{t-s}|)N^\varepsilon}{\sqrt{N\eta_{z}}}
\frac{1}{|z_{t-s}-z_{a}(s)|}
\frac{1}{|z_{t-s}-z_{b}(s)|}\}
\mid \btheta(s)\in{\mathscr{G}}\right)\geq 1-e^{-(\log N)^D}.
$$
\end{proposition}

\begin{proof}
We choose $A=u_b(s)u_a(s)^*$ in Lemma  \ref{lem:StocAdv}, and we abbreviate $a=u_a(s)$, $b=u_b(s)$.  Defining
$$
p_t(z)=p_t^{a,b}(z)  = \langle a, \frac{z+U_t}{z-U_t}b\rangle,
$$
this gives 
$$
\rd p_t(z) = m_t(z) z \partial_z p_t(z)+ 2 z u_a(s)^* \frac{U}{z-U} \sqrt{2}\rd B_t\frac{1}{z-U}u_b(s).
$$
We can assume $s=0$. Note that $p_0(z_t)=0$ and we want to bound $p_t(z)$.
We first consider some $|z|>1+\varphi^{30}/N$.  Then 
\begin{equation}\label{eqn:afterChar3}
\rd p_u(z_{t-u})= (m_u(z_{t-u})-1)z_{t-u}\partial_z p_u(z_{t-u})\rd u+ \frac{2\sqrt{2}\ii z_{t-u}}{N^{1/2}}\sum_{k,j}a^*u_j(u)\frac{z_j(u)}{z_{t-u}-z_j(u)}\rd {\hat B}_{jk}(u)\frac{1}{z_{t-u}-z_k(u)}{u_k(u)^*b},
\end{equation}
where the $\hat B_{jk}$ are independent Brownian motions and $\hat B_{jj}=\hat B_{j}$ from (\ref{eqn:afterChar}),  and $u_k(s)$ are  orthonormal eigenvectors diagonalizing $U_s$.
The quadratic variation of the martingale term in (\ref{eqn:afterChar3}) is bounded with
$$
\frac{C}{N}\int_0^t\frac{|z_{t-u}|^2}{\eta_{z_{t-u}}^2(1+|z_{t-u}|^2)}
{\rm Re}\langle a,\frac{z_{t-u}+U(u)}{z_{t-u}-U(u)}a\rangle
{\rm Re}\langle b,\frac{z_{t-u}+U(u)}{z_{t-u}-U(u)}b\rangle\rd u.
$$
From Proposition \ref{prop:ani1},  with probability $1-e^{-3(\log N)^D}$ this is bounded with
$$
\frac{C}{N}\int_0^t\frac{1}{\eta_{z_{t-u}}^2}
{\rm Re}\langle a\frac{z_{t}+U(0)}{z_{t}-U(0)}a\rangle
{\rm Re}\langle b\frac{z_{t}+U(0)}{z_{t}-U(0)}b\rangle\rd u
\leq
\frac{C}{N\eta_{z}}
\frac{\eta_{z_t}(1+|z_t|)}{|z_t-z_{a}(0)|^2}
\frac{\eta_{z_t}(1+|z_t|)}{|z_t-z_{b}(0)|^2},
$$
so that with probability $1-e^{-2(\log N)^D}$ the martingale term in (\ref{eqn:afterChar3}) is bounded with 
$\frac{\varphi}{\sqrt{N\eta_{z}}}
\frac{\eta_{z_t}(1+|z_t|)}{|z_t-z_{a}(0)|\cdot |z_t-z_{b}(0)|}$, which is the expected error.

A new difficulty comes from the finite variation error term in (\ref{eqn:afterChar3}): for $a\neq b$, ${\rm Re}\, p^{a,b}$ has no a priori sign.  We therefore first simply bound
$
|\partial_z p^{a,b}_u|\leq \frac{C}{\eta_z(1+|z|)}({\rm Re}\, p^{a,a}_u+{\rm Re}\,  p^{b,b}_u)
$
and use Proposition \ref{prop:ani1} and its proof to obtain
\begin{multline*}
\int_{0}^{t}|z_{t-u}|\cdot|m_u(z_{t-u})-1|\cdot |\partial_z p^{a,b}_u(z_{t-u})|\rd u\leq 
\int_{0}^{t}\frac{\varphi^8|z_{t-u}|}{N\eta_{z_{t-u}}}\cdot \frac{{\rm Re}\ p^{a,a}_u(z_{t-u})+{\rm Re}\ p^{b,b}_u(z_{t-u})}{\eta_{z_{t-u}}(1+|z_{t-u}|)}\rd u\\
\leq 
\frac{\varphi^8}{N\eta_{z}}\cdot\left({\rm Re}\ p^{a,a}_0(z_{t})+{\rm Re}\ p^{b,b}_0(z_{t})\right)\leq\frac{\varphi^8\eta_{z_t}(1+|z_t|)}{N\eta_{z}}\cdot\left(\frac{1}{|z_t-z_a(0)|^2}+\frac{1}{|z_t-z_b(0)|^2}\right).
\end{multline*}
We have therefore proved, that, for any $D>0$ there exists $N_0$ such that for any $N\geq N_0$,  with probability $1-e^{-(\log N)^D}$ we have 
$$
|p_t^{a,b}(z)|\leq \frac{\varphi^8\eta_{z_t}(1+|z_t|)}{\sqrt{N\eta_{z}}}
\frac{1}{|z_{t}-z_{a}(0)|\cdot|z_{t}-z_{b}(0)|}
+
\frac{\varphi^8\eta_{z_t}(1+|z_t|)}{N\eta_{z}}\cdot\left(\frac{1}{|z_t-z_a(0)|^2}+\frac{1}{|z_t-z_b(0)|^2}\right)
$$
for any $\eta_z>\varphi^{30}/N$ and $t\in[0,\Delta]$ (uniformity in $z,t$ requires (1) an omitted  grid argument identical to the second step in the proof of Proposition \ref{prop:rig} for $\eta_z\in[\varphi^{30}/N,1/2],t\in[0,\Delta]$, (2) a contour integral argument similar to the end of the proof of Proposition \ref{prop:ani1} to extend to $\eta_z>1/2$).

We now iterate by injecting this estimate in the finite variation term from (\ref{eqn:afterChar3}). More precisely, consider the following induction hypothesis $({\rm P}_n)$:  For any $D>0$ there exists $N_0=N_0(n,D)$,  such that for any $N\geq N_0$,  $a,b\in\llbracket 1,N\rrbracket$, the following holds with probability $1-e^{-(\log N)^D}$:  for any $0<t<\Delta$ and $\eta_z>\varphi^{30n}/N$ we have
$$
|p_t^{a,b}(z)|\leq \frac{\varphi^{8n}\eta_{z_t}(1+|z_t|)}{\sqrt{N\eta_{z}}}
\frac{1}{|z_{t}-z_{a}(0)|\cdot|z_{t}-z_{b}(0)|}
+
\frac{\varphi^{8n}\eta_{z_t}(1+|z_t|)}{(N\eta_{z})^n}\cdot\left(\frac{1}{|z_t-z_a(0)|^2}+\frac{1}{|z_t-z_b(0)|^2}\right).
$$
We have just proved $(P_1)$,  and to prove that  $(P_n)$ implies $(P_{n+1})$ we just need to improve on the finite variation term.  By Cauchy's formula,
\begin{multline*}
\int_{0}^{t}|z_{t-u}|\cdot|m_u(z_{t-u})-1|\cdot |\partial_z p^{a,b}_u(z_{t-u})|\rd u\leq 
\int_{0}^{t}\frac{\varphi^8|z_{t-u}|}{N\eta_{z_{t-u}}}\cdot \frac{\max_{|w-z_{t-u}|=\eta_{z_{t-u}/10}}{|p^{a,b}_u(w)|}}{\eta_{z_{t-u}}}\rd u\\
\leq 
\int_{0}^{t}\frac{|z_{t-u}|}{N\eta_{z_{t-u}}^2}  \left(
\frac{\varphi^{8(n+1)}\eta_{z_t}(1+|z_t|)}{\sqrt{N\eta_{z_{t-u}}}}
\frac{1}{|z_{t}-z_{a}(0)|
\cdot|z_{t}-z_{b}(0)|}
+
\frac{\varphi^{8(n+1)}\eta_{z_t}(1+|z_t|)}{(N\eta_{z_{t-u}})^n}\cdot\left(\frac{1}{|z_t-z_a(0)|^2}+\frac{1}{|z_t-z_b(0)|^2}\right)
\right) \rd u\\
\leq  \frac{\varphi^{8(n+1)}\eta_{z_t}(1+|z_t|)}{\sqrt{N\eta_{z}}}
\frac{1}{|z_{t}-z_{a}(0)|\cdot|z_{t}-z_{b}(0)|}
+ \frac{\varphi^{8(n+1)}\eta_{z_t}(1+|z_t|)}{(N\eta_{z})^{n+1}}\cdot\left(\frac{1}{|z_t-z_a(0)|^2}+\frac{1}{|z_t-z_b(0)|^2}\right).
\end{multline*}
This completes the induction and the proof of the proposition by choosing $n=100/\varepsilon$ (now $\eta_z > N^{\eps}/N$).
\end{proof}

\subsection{Full rank projections}. We now prove the main estimate to reach optimal scales for multi-time loop equations, concerning the following resolvent projection,
\begin{equation}\label{eqn:start1}
{\rm Tr}\left(\frac{w+U_t}{w-U_t}\cdot \frac{v+U_s}{v-U_s}\right)=\sum_{k}\frac{v+z_k(s)}{v-z_k(s)}\langle u_k(s), \frac{w+U_t}{w-U_t}u_k(s)\rangle.
\end{equation}
If we add the error estimates from Proposition \ref{prop:ani1} on the above right-hand side, for example for $\eta_v\sim 1$,  the obtained bound is
$
\sqrt{N/\eta_w},
$
far worse than the bound $1/\eta_w$ below. The key source of improvement to achieve the optimal result below is Proposition \ref{prop:offDiag}.
The {\it averaged} and {\it multi-time} local law below seems to be new,  including in the context of Hermitian random matrices.

In the following statement,  we use the notation $d(v,w)=\max(|v-w|,|v-\frac{w}{|w|^2}|)$.

\begin{proposition}\label{prop:avIso} For any $D,\varepsilon>0$ there exists $N_0$ such that for any $N\geq N_0$ we have
\begin{multline*}
\mathbb{P}\Big(\bigcap_{\substack{s<t<s+\Delta\\
\eta_v,\eta_w\in(0,\frac{1}{2}]}}\{\left|{\rm Tr}\left(\frac{w+U_t}{w-U_t}\cdot \frac{v+U_s}{v-U_s}\right)-{\rm Tr}\left(\frac{w_{t-s}+U_s}{w_{t-s}-U_s}\cdot \frac{v+U_s}{v-U_s}\right)\right|\\
\leq \frac{N^\varepsilon(1+|w_{t-s}|)}{ d(v,w_{t-s})\,\min(1,N\eta_v)}(\frac{1}{\eta_w}+\frac{1}{\sqrt{\eta_w\min(\eta_{w_{t-s}},\eta_v)}})\nc\}
\mid \btheta(s)\in\mathscr{G}\Big)\geq 1-e^{-(\log N)^D}.
\end{multline*}
\end{proposition}

\begin{proof}
We can again assume $s=0$, and first consider the case
$|w|\in[1+N^{\e}/N,3/2]$, $|v|\in(1,3/2]$.
Lemma \ref{lem:StocAdv} with $A=\frac{v+U_0}{v-U_0}$ gives
\begin{multline}\label{eqn:stochint}
{\rm Tr}\left(\frac{w+U_t}{w-U_t}\cdot A\right)-{\rm Tr}\left(\frac{w_{t}+U_0}{w_{t}-U_0}\cdot A\right)=\int_0^t
w_{t-u} (m_{u}(w_{t-u})-1)\partial_w m_{u,A}(w_{t-u})\rd u\\+2\int_0^t w_{t-u} {\rm Tr}\left(\frac{1}{w_{t-u}-U_u}A\frac{U_u}{w_{t-u}-U_u}\sqrt{2}\rd B_u\right).
\end{multline}
The above stochastic integral can also be written 
$$
\frac{\sqrt{2}}{\sqrt{N}}\int_{0}^t\sum_{j,k}\frac{w_{t-u}}{w_{t-u}-z_j(u)}\frac{z_k(t)}{w_{t-u}-z_k(u)}\langle u_j(u),A u_k(u)\rangle \rd \hat B_{jk}(u)
$$
where the $\hat B_{jk}$ are independent, standard Brownian motions.  Abbreviating $\ell=u_{\ell}(0)$ and using the spectral decomposition $A=\sum_{\ell} \frac{v+z_\ell(0)}{v-z_\ell(0)}\ell \ell^*$,  the bracket of the above stochastic integral is (we denote, in this proof, $\langle x,y\rangle=x^*y$)
\begin{align}
&\frac{C}{N}\int_0^t\sum_{j,k}\frac{|w_{t-u}|^2}{|w_{t-u}-z_j(u)|^2}\frac{1}{|w_{t-u}-z_k(u)|^2}|\langle u_j(u),A u_k(u)|^2\rd u\notag\\
=&\frac{C}{N}\int_0^t\sum_{j,k}\frac{|w_{t-u}|^2}{|w_{t-u}-z_j(u)|^2}\frac{1}{|w_{t-u}-z_k(u)|^2}\left|\sum_\ell\langle u_j(u),\ell\rangle \langle \ell,u_k(u)\rangle\frac{v+z_\ell(0)}{v-z_\ell(0)}\right|^2\rd u\notag\\
=&\frac{C}{N}\int_0^t\sum_{\ell_1,\ell_2,j,k}
\frac{v+z_{\ell_1}(0)}{v-z_{\ell_1}(0)}\frac{\overline{v+z_{\ell_2}(0)}}{\overline{v-z_{\ell_2}(0)}}
\frac{|w_{t-u}|^2}{|w_{t-u}-z_j(u)|^2}\frac{1}{|w_{t-u}-z_k(u)|^2}\langle u_j(u),\ell_1\rangle \langle \ell_1,u_k(u)\rangle\overline{\langle u_j(u),\ell_2\rangle }\overline{\langle\ell_2,u_k(u)\rangle}\rd u\notag\\
=&\frac{C}{N}\int_0^t\sum_{\ell_1,\ell_2}
\frac{v+z_{\ell_1}(0)}{v-z_{\ell_1}(0)}\frac{\overline{v+z_{\ell_2}(0)}}{\overline{v-z_{\ell_2}(0)}}|w_{t-u}|^2
\left|\sum_j\frac{\langle\ell_2,u_j(u)\rangle\langle u_j(u),\ell_1\rangle}{|w_{t-u}-z_j(u)|^2} \right|^2\rd u\notag\\
\leq&\frac{C}{N}\int_0^t \frac{1}{\eta_{w_{t-u}}^2}
\sum_{\ell_1,\ell_2}
\frac{1}{|v-z_{\ell_1}(0)|\cdot|v-z_{\ell_2}(0)|}
\left|\langle \ell_2,\Big({\rm Re}\frac{w_{t-u}+U(u)}{w_{t-u}-U(u)}\Big)\ell_1\rangle \right|^2\rd u.\label{eqn:interm}
\end{align}
Note that $2{\rm Re}\frac{w+U}{w-U}=\frac{w+U}{w-U}-\frac{w^*+U}{w^*-U}$ where $w^*={\bar w}^{-1}$, so that we can apply
Proposition \ref{prop:offDiag} to bound the above:  With probability $1-e^{-4(\log N)^D}$
the contribution from $\ell_1\neq \ell_2$ in the above sum leads to evaluating
$$
\sum_{\ell_1,\ell_2}
\frac{N^\e}{|v-z_{\ell_1}(0)|\cdot|v-z_{\ell_2}(0))|}
\frac{\eta_{w_{t}}^2(1+|w_t|^2)}{N\eta_{w_{t-u}}}
\frac{1}{|w_{t}-z_{\ell_1}(0)|^2}
\frac{1}{|w_{t}-z_{\ell_2}(0)|^2}
=
\frac{N^{1+\e}}{\eta_{w_{t-u}}}\left(\frac{1}{N}\sum_{\ell}\frac{(1+|w_t|)\eta_{w_t}}{|v-z_{\ell}(0)|\cdot|w_{t}-z_{\ell}(0)|^2}\right)^2
$$
As $\eta_w>N^{-1+\e}$, by Proposition \ref{prop:rig} the following holds with overwhelming probability:
\begin{align*}
&\frac{1}{N}\sum_{\ell:|v-z_\ell(0)|>\frac{N^\e}{N}}\frac{\eta_{w_t}}{|v-z_{\ell}(0)|\cdot|w_{t}-z_{\ell}(0)|^2}\leq C\int_{|v-\lambda|>\frac{N^\e}{N}} \frac{\eta_{w_t}}{|v-\lambda|\cdot|w_{t}-\lambda|^2}\rd\lambda
\leq \frac{C}{d(v,w_{t})}\mathds{1}_{\eta_{w_t}<\eta_v}+ \frac{C\log N}{d(v,w_{t})}\mathds{1}_{\eta_{w_t}\geq\eta_v}\\
&\frac{1}{N}\sum_{\ell:|v-z_\ell(0)|\leq\frac{N^\e}{N}}\frac{\eta_{w_t}}{|v-z_{\ell}(0)|\cdot|w_{t}-z_{\ell}(0)|^2}
\leq 
\frac{N^{2\e}}{N\eta_v}\frac{\eta_{w_t}}{d(v,w_t)^2}\leq \frac{N^{2\e}}{N\eta_v}\frac{1}{d(v,w_t)}.
\end{align*}
so that we have obtained
\begin{equation}
\label{eqn:rig11}
\frac{1}{N}\sum_{\ell}\frac{\eta_{w_t}}{|v-z_{\ell}(0)|\cdot|w_{t}-z_{\ell}(0)|^2}\leq C\int \frac{\eta_{w_t}}{|v-\lambda|\cdot|w_{t}-\lambda|^2}\rd\lambda
\leq \frac{C}{d(v,w_{t})}\mathds{1}_{\eta_{w_t}<\eta_v}+ \frac{N^{2\e}}{d(v,w_{t})\min(1,N\eta_v)}\mathds{1}_{\eta_{w_t}\geq\eta_v}.
\end{equation}
Moreover, the contribution from the diagonal terms in (\ref{eqn:interm}) leads to a sum evaluated with Proposition \ref{prop:ani1}:
$$
\sum_{\ell}
\frac{1}{|v-z_{\ell}(0)|^2}\left({\rm Re}\langle \ell,\frac{w_{t-u}+U(u)}{w_{t-u}-U(u)}\ell\rangle \right)^2\leq
\sum_{\ell}
\frac{C}{|v-z_{\ell}(0)|^2}\left({\rm Re}\frac{w_t+z_{\ell}(0)}{w_t-z_{\ell}(0)} \right)^2\\
\leq \sum_{\ell}
\frac{C(1+|w_t|^2)\eta_{w_t}^2}{|v-z_{\ell}(0)|^2\cdot|w_t-z_{\ell}(0)|^4},$$
and similarly to (\ref{eqn:rig11}) we obtain
$$
\frac{1}{N}\sum_{\ell}
\frac{\eta_{w_t}^2}{|v-z_{\ell}(0)|^2\cdot|w_t-z_{\ell}(0)|^4}
\leq \frac{C}{\eta_{w_t}d(v,w_{t})^2}\mathds{1}_{\eta_{w_t}<\eta_v}+ \frac{C}{\eta_v d(v,w_{t})^2\min(1,N\eta_v)^2}\mathds{1}_{\eta_{w_t}\geq\eta_v}.
$$
Using the previous four estimates in (\ref{eqn:interm}), with probability $1-e^{-3(\log N)^D}$  the bracket of (\ref{eqn:stochint}) is bounded with
$$
\frac{N^{2\e}}{\min(1,N\eta_v)^2}\int_0^t \frac{1}{\eta_{w_{t-u}}^2}
\left(
\frac{1+|w_t|^2}{\eta_{w_{t-u}}d(v,w_t)^2}
+
\frac{1+|w_t|^2}{\min(\eta_{w_t},\eta_v)d(v,w_t)^2}
\right)
\rd u
\leq \frac{N^{2\e}(1+|w_t|^2)}{ d(v,w_{t})^2 \min(1,N\eta_v)^2}\cdot(\frac{1}{\eta_w^2}+\frac{1}{\eta_w\min(\eta_{w_t},\eta_v)})\nc,
$$
so, with probability $1-e^{-2(\log N)^D}$, (\ref{eqn:stochint}) is smaller than $\frac{N^\e(1+|w_t|)}{d(v,w_t)\min(1,N\eta_v)}\cdot(\frac{1}{\eta_w}+\frac{1}{\sqrt{\eta_w\min(\eta_{w_t},\eta_v)}})$.
We now consider the error term due to the finite variation term, based on (\ref{eqn:start1}):
\begin{multline*}
|\partial_w m_{u,A}(w_{t-u}))|\leq \sum\frac{1}{|v-z_k(0)|}\frac{1}{\eta_{w_{t-u}}(1+|w_{t-u}|)}{\rm Re}\langle k,  \frac{w_{t-u}+U_u}{w_{t-u}-U_u}k \rangle\\
\leq
\sum\frac{C}{|v-z_k(0)|}\frac{1}{\eta_{w_{t-u}}(1+|w_{t-u}|)}\frac{1}{|w_{t}-z_k(0)| }\leq \frac{CN^{1+\e}}{\eta_{w_{t-u}}(1+|w_{t-u}|)d(v,w_t)\min(1,N\eta_v)}
\end{multline*}
so
$$
\int_{0}^{t}|w_{t-u}|\cdot|m_u(w_{t-u})-1|\cdot |\partial_w m_{u,A}(w_{t-u}))|\rd u\leq \frac{N^\e}{\eta_wd(v,w_t)\min(1,N\eta_v)}.
$$
This concludes the proof of the proposition for $|w|\in[1+N^{\e}/N,3/2]$, $|v|\in(1,3/2]$.  The proof for $|w|\in[1/2,1-N^{\e}/N]$, $|v|\in[1/2,1)$ is strictly similar, and uniformity in $v,w$ and $t\in[0,\Delta]$ follows from
the same grid argument as in the second step in the proof of Proposition \ref{prop:rig}.

We now consider the case $|w|\in(1,1+N^\e/N]$ and $|v|\in(1,3/2]$, relying on the following analogue of (\ref{eqn:monotonic}), where we now denote $w'$ with the same argument as $w$ such that $\eta_{w'}=N^{-1+\e}$:
\begin{equation}\label{eqn:monotonicBis}
{\rm Re}\langle q, \frac{w+U_t}{w-U_t}q\rangle\leq C \frac{\eta_{w'}}{\eta_w}{\rm Re}\langle q \frac{w'+U_t}{w'-U_t}q\rangle.
\end{equation}
In the sequence below we start with (\ref{eqn:start1}), use 
(\ref{eqn:monotonicBis}) and proceed similarly to (\ref{eqn:monotonic2})
to bound the contribution from ${\rm Im}\langle q, \frac{w+U_t}{w-U_t}q\rangle$, denoting $w_k$ with the same argument as $w$ such that $\eta_{w_k}=e^k\eta_w$:
\begin{align*}
\left|{\rm Tr}\left(\frac{w+U_t}{w-U_t}\cdot \frac{v+U_0}{v-U_0}\right)\right|&\leq
\sum_{k}\frac{C}{|v-z_k(0)|}\left(\re\langle u_k(0), \frac{w+U_t}{w-U_t}u_k(0)\rangle+\Big|\im\langle u_k(0), \frac{w+U_t}{w-U_t}u_k(0)\rangle\Big|\right)\\
&\leq \sum_{k}\frac{C}{|v-z_k(0)|}\left(\frac{N^\e}{N\eta_w}{\rm Re}\langle u_k(0), \frac{w'+U_t}{w'-U_t}u_k(0)\rangle+\sum_{N^\e/N\leq e^j\eta_w\leq 1}{\rm Re}\langle u_k(0), \frac{w_j+U_t}{w_j-U_t}u_k(0)\rangle\right)\\
&\leq
\sum_{k}\frac{C}{|v-z_k(0)|}\left(\frac{N^\e}{N\eta_w}{\rm Re}\langle u_k(0), \frac{w'_{t}+U_0}{w'_{t}-U_0}u_k(0)\rangle+\sum_{N^\e/N\leq e^j\eta_w\leq 1}{\rm Re}\langle u_k(0), \frac{(w_j)_{t}+U_0}{(w_j)_{t}-U_0}u_k(0)\rangle\right)\\
&=\sum_{k}\frac{C}{|v-z_k(0)|}\left(\frac{N^\e}{N\eta_w}{\rm Re}\frac{w'_{t}+z_k(0)}{w'_{t}-z_k(0)}+\f
\sum_{N^\e/N\leq e^j\eta_w\leq 1}{\rm Re}\frac{(w_j)_{t}+z_k(0)}{(w_j)_{t}-z_k(0)}\right)\\
&\leq \sum_{k}\frac{C}{|v-z_k(0)|}\left(\frac{N^\e}{N\eta_w}\frac{\eta_{w'_t}(1+|w'_t|)}{|w'_{t}-z_k(0)|^2}+\sum_{N^\e/N\leq e^j\eta_w\leq 1}\frac{\eta_{(w_j)_t}(1+|(w_j)_t|)}{|(w_j)_{t}-z_k(0)|^2}\right).
\end{align*}
As in (\ref{eqn:rig11}), we have
$$
\sum_{k}\frac{\eta_{w'_t}}{|v-z_k(0)|\cdot|w'_{t}-z_k(0)|^2}\leq \frac{N^{1+\e}}{d(v,w'_{t})\min(1,N\eta_v)},
$$
and similarly for the terms involving $(w_j)_t$, which gives, as $w_t'$ is close to $w_t$,
\begin{equation}\label{eqn:keyEstimateExt}
\left|{\rm Tr}\left(\frac{w+U_t}{w-U_t}\cdot \frac{v+U_0}{v-U_0}\right)\right|\leq \frac{N^\e(1+|w_t|)}{\eta_w d(v,w_{t})\min(1,N\eta_v)}.
\end{equation}
The analogous estimate with $U_t$ replaced with $U_0$, and $w$ replaced with $w_t$ gives 
\begin{equation}\label{eqn:extTriv}
\left|{\rm Tr}\left(\frac{w_t+U_0}{w_t-U_0}\cdot \frac{v+U_0}{v-U_0}\right)\right|\leq \frac{N^\e(1+|w_t|)}{\eta_{w_t} d(v,w_t)\min(1,N\eta_v)}.
\end{equation}
This concludes the proof for  $|w|\in[1,1+ N^\e/N]$ and $|v|\in(1,3/2]$. The proof when $|w|\in[1- N^\e/N,1]$ follows the same argument.
\end{proof}

\section{Loop equations via stochastic analysis on the unitary group}

\label{sec:loop}

Integration by parts at the level of matrix process (and not at the level of the two-dimensional interacting particle systems constituted by its eigenvalues) has a particularly simple form in the case of the Dyson dynamics for the Gaussian Unitary Ensemble:  $M(t)$ is distributed according to  $e^{-t} M(0) + \sqrt{1-e^{-2t}} G$ where $G$  is a  GUE matrix of size $N$, whose density is proportional to $e^{- N \Tr(H^2)} D H$, and $G$ is independent of $M(0)$, the initial condition. Here, $DH$ is the Lebesgue measure on Hermitian matrices. The explicit potential in $e^{- N \Tr(H^2)} D H$ makes integration by parts tractable and this has been used for instance in \cite[Lemma 4.1]{DuiJoh2018} in the context of mesoscopic equilibrium for linear statistics in the GUE Dyson's Brownian motion. However, this very nice structure does not extend to the unitary Brownian motion and we use instead stochastic calculus on this Lie group, in particular Girsanov theorem and exact solutions of some matrix SDEs that characterize Fr\'echet derivatives as an alternative (Section 5.1 below).
%

Such integration by parts often carry the name loop equations in random matrix theory \cite{Joh1998}, where they traditionally relate correlation functions of particle systems (see \cite{For,Gui,Ser}),  i.e.  only eigenvalues in the context of random matrices. In our multitime and singular setting, the integration by parts formula (see Proposition \ref{prop:Girsanov}) encodes information/correlations about eigenvalues but also eigenvectors.

\subsection{Fr\'echet derivatives as explicit solutions of matrix SDEs}.\  As in the previous sections,  
 the Brownian motion $(U_t)$ on the unitary group is defined through (\ref{eqn:Unitbis}), i.e.
$$
\rd U_t = \sqrt{2}U_t \rd B_t -  U_t \rd t
$$
where $(B_t)$ is a Brownian motion on the space of skew Hermitian matrices. Note that if $M$ is Hermitian and $N$ is skew-Hermitian, then $\langle M ,  N \rangle_{\mf{R}} :=  \re (\Tr (\overline{M}^T  N )) = 0$.  
\begin{lemma}[Representation of UBM derivatives]\label{lem:der}
Consider a predictable bounded and continuous skew Hermitian valued process $(f_s)$ and set $F_t := \int_0^t f_s ds$. Then in $L^2(\mb{P})$ and almost surely,
\begin{equation}
\label{eq:frechet-ubm}
D_F U_t := \lim_{\eps \to 0} \eps^{-1} ( U(B+\eps F)_t - U(B)_t ) = \sqrt{2} \left( \int_0^t U_s f_s U_s^{-1} \rd s \right) U_t.
\end{equation}
\end{lemma}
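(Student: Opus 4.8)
The plan is to solve for the perturbed process $V^\eps_t := U(B+\eps F)_t$ in closed form and then differentiate in $\eps$ at $\eps=0$. By definition $V^\eps$ is the strong solution of $\rd V^\eps_t = \sqrt{2}\,V^\eps_t \rd B_t + \sqrt{2}\eps\, V^\eps_t f_t \rd t - V^\eps_t \rd t$ with $V^\eps_0 = U_0$ (only the driving noise is shifted; the initial condition, Haar-distributed at equilibrium, is held fixed). First I would look for a solution of the form $V^\eps_t = P^\eps_t U_t$ with $(P^\eps_t)$ a matrix-valued process of finite variation and $P^\eps_0 = \Id$. Because $P^\eps$ has finite variation, the cross-variation $[P^\eps,U]$ vanishes, so Itô's product rule gives $\rd(P^\eps_t U_t) = \dot P^\eps_t U_t\,\rd t + P^\eps_t(\sqrt{2}\,U_t\rd B_t - U_t\rd t)$; substituting $P^\eps_t = V^\eps_t U_t^{-1}$ and comparing with the equation for $V^\eps$, the martingale parts agree identically, while the drifts coincide precisely when $\dot P^\eps_t = \sqrt{2}\,\eps\, P^\eps_t\,U_t f_t U_t^{-1}$, $P^\eps_0 = \Id$. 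This is a pathwise linear ODE driven by the continuous, adapted and — since $U_s$ is unitary, so that $\|U_s f_s U_s^{-1}\|_{\rm op} = \|f_s\|_{\rm op}$ — uniformly bounded matrix process $s\mapsto U_s f_s U_s^{-1}$, hence it has a unique global solution, the time-ordered exponential $P^\eps_t = \overrightarrow{\exp}\!\big(\sqrt{2}\eps\int_0^t U_s f_s U_s^{-1}\,\rd s\big)$, which is adapted and of finite variation.

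Running the computation backwards, with $P^\eps$ so defined $V^\eps := P^\eps U$ genuinely solves the SDE for $U(B+\eps F)$ with the right initial datum, so by strong existence and uniqueness for \eqref{eqn:Unitbis} — the coefficients $M\mapsto\sqrt{2}MX_k$ and $M\mapsto -M$ are linear and solutions stay in the compact group $\mathrm{U}(N)$ — I can conclude $V^\eps_t = U(B+\eps F)_t$ almost surely for all $t$. Next I would expand the time-ordered exponential as a Dyson series $P^\eps_t = \Id + \sqrt{2}\,\eps\int_0^t U_s f_s U_s^{-1}\,\rd s + \eps^2 R^\eps_t$, where $R^\eps_t$ collects the $k\ge 2$ nested integrals; the bound $\|U_s f_s U_s^{-1}\|_{\rm op}\le\|f\|_\infty$ yields $\|R^\eps_t\|_{\rm op}\le\sum_{k\ge 2}|\eps|^{k-2}(\sqrt{2}\|f\|_\infty t)^k/k!\le C(\|f\|_\infty,t)$ for $|\eps|\le 1$, deterministically. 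Therefore $\eps^{-1}(V^\eps_t - U_t) = \eps^{-1}(P^\eps_t - \Id)U_t = \big(\sqrt{2}\int_0^t U_s f_s U_s^{-1}\,\rd s\big)U_t + \eps R^\eps_t U_t$, and $\|\eps R^\eps_t U_t\|\le C(\|f\|_\infty,t)\,|\eps|\,\|U_t\|$ with $\|U_t\|$ deterministically bounded; letting $\eps\to 0$ gives \eqref{eq:frechet-ubm} both almost surely (in fact uniformly in $\omega$) and in $L^2(\mb{P})$. As a consistency check one verifies directly by Itô's formula that the claimed limit $G_t := \sqrt{2}\big(\int_0^t U_s f_s U_s^{-1}\,\rd s\big)U_t$ solves the linearized equation $\rd G_t = \sqrt{2}G_t\rd B_t + \sqrt{2}U_t f_t\,\rd t - G_t\rd t$ with $G_0 = 0$, which is exactly the equation satisfied by $\partial_\eps|_{\eps=0}V^\eps_t$.

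I do not expect a serious obstacle. The two points that need care are the stochastic product rule (legitimate precisely because $P^\eps$ has finite variation, so the bracket term is absent) and the appeal to pathwise strong uniqueness for \eqref{eqn:Unitbis}. The genuine simplification is that compactness of $\mathrm{U}(N)$ and boundedness of $f$ make $U_s f_s U_s^{-1}$ uniformly bounded, which turns the Dyson-series remainder into a deterministic $\OO(\eps)$ quantity, so the $L^2$ and almost-sure statements follow at once from the closed form of $V^\eps$, with no Gr\"onwall-type stochastic estimate needed. (Had I wanted to avoid the explicit form, I would instead set $W^\eps_t := \eps^{-1}(V^\eps_t - U_t) - G_t$, note that it solves $\rd W^\eps_t = \sqrt{2}W^\eps_t\rd B_t - W^\eps_t\rd t + \sqrt{2}\eps D^\eps_t f_t\rd t$ with $D^\eps_t := \eps^{-1}(V^\eps_t - U_t)$, establish a uniform-in-$\eps$ bound $\E\sup_{[0,T]}\|D^\eps_t\|^2\le C(T)$, and close with Gr\"onwall and Burkholder--Davis--Gundy to obtain $\E\sup_{[0,T]}\|W^\eps_t\|^2 = \OO(\eps^2)$.)
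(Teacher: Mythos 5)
Your proof is correct, and it reverses the order of the paper's argument: the paper first shows (via an $L^2$/Gr\"onwall limiting argument that it only sketches) that $V_t := D_F U_t$ exists and solves the linearized SDE $\rd V_t = \sqrt{2}V_t\rd B_t - V_t\rd t + \sqrt{2}U_t f_t\rd t$, and only \emph{then} solves that SDE in closed form by computing $\rd(V_t U_t^{-1}) = \sqrt{2}U_t f_t U_t^{-1}\rd t$; you instead solve the perturbed SDE for $V^\eps = U(B+\eps F)$ itself in closed form and differentiate the explicit formula in $\eps$. The algebraic engine is the same in both -- the observation that conjugating by $U_t$ kills the martingale part, which in your version shows up as the finite-variation ansatz $V^\eps = P^\eps U$ being consistent -- but your ordering pays off: once you have $V^\eps_t = \overrightarrow{\exp}\big(\sqrt{2}\eps\int_0^t U_s f_s U_s^{-1}\,\rd s\big)U_t$, the Dyson-series remainder is controlled \emph{deterministically} by $\|f\|_\infty$ and $t$, so the $L^2$ and almost-sure convergences are immediate and you bypass the stochastic Gr\"onwall/continuity estimate that the paper invokes but does not spell out. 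Your appeal to pathwise uniqueness to identify $P^\eps U$ with $U(B+\eps F)$ is the one step that needs stating carefully, and you do: the coefficients are linear with bounded, adapted driving term, so strong uniqueness is standard. The consistency check at the end (that $G_t$ solves the linearized SDE) is exactly what the paper proves directly; it is a nice sanity check here but not logically needed given the closed form.
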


\begin{proof}
First, we show that $V_t := D_F U_t$ exists and solves, in integral form,
$$
V_0 = 0, \qquad \rd  V_t =\sqrt{2} V_t \rd B_t  -  V_t \rd t  + \sqrt{2}U_t f_t \rd t.
$$
Indeed, with $U^{(\eps)} := U(B+\eps F)$ which solves
$$
\rd  U^{(\eps)}_t = \sqrt{2}U^{(\eps)}_t \rd (B_t + \eps F_t) - U^{(\eps)}_t \rd t =\sqrt{2} U^{(\eps)}_t \rd B_t  -  U^{(\eps)}_t \rd t + \eps \sqrt{2}U^{(\eps)}_t f_t \rd t 
$$
and $V^{(\eps)} := \eps^{-1} (U^{(\eps)}  - U)$, which satisfies $V^{(\eps)}_0 = 0$ and 
$$
\rd  V^{(\eps)}_t =\sqrt{2} V^{(\eps)}_t \rd B_t  - V^{(\eps)}_t \rd t +\sqrt{2} (U^{(\eps)}_t - U_t ) f_t \rd t + \sqrt{2}U_t f_t \rd t
$$
we obtain (by an $L^2$ estimate, Gronwall lemma and a continuity estimate), when $\eps \downarrow 0$,
\begin{equation}
\rd  V_t =\sqrt{2} V_t \rd B_t  -  V_t \rd t  + \sqrt{2}U_t f_t \rd t.
\end{equation}
Most importantly,  this equation has an explicit solution. Recalling that $\rd U_t = \sqrt{2}U_t \rd B_t - U_t \rd t$, taking the conjugate transpose and using that $\rd B_t$ is skew Hermitian, we have
$$
\rd U^{-1}_t = -\sqrt{2} \rd B_t U^{-1}_t-  U^{-1}_t \rd t.
$$ 
An application of It\^o's formula gives 
\begin{align*}
\rd V_t U^{-1}_t &= (\sqrt{2}V_t \rd B_t - V_t \rd t +\sqrt{2} U_t f_t \rd t) U_t^{-1} + V_t ( -\sqrt{2} \rd B_t U_t^{-1} -U_t^{-1} \rd t)  +2 V_t \rd B_t (- \rd B_t U^{-1}_t ) \\
& = \sqrt{2}U_t f_t U_t^{-1} \rd t
\end{align*}where we used $\rd B_t \rd B_t = - I$ to obtain the second equality, hence \eqref{eq:frechet-ubm}. 
\end{proof}

In the case of $1$d Brownian motion, the Cameron-Martin's formula implies, for deterministic shift $(f_t)$
\begin{align*}
\E \left( \int_0^1 f_s \rd B_s\cdot  \Phi (B)  \right) & =  \frac{\rd}{\rd \eps}_{|\eps = 0} \E \left( e^{\eps \int_0^1 f_s \rd B_s - \frac{\eps^2}{2} \int f_s^2 \rd s} \Phi (B) \right)  \\
& = \frac{\rd}{\rd \eps}_{|\eps = 0}  \int \Phi(B) e^{ - \frac{1}{2} \int_0^1 \rd (B_s - \eps F_s) \rd ( B_s - \eps F_s)} DB \\ 
& =  \frac{\rd}{\rd \eps}_{|\eps = 0} \E \left( \Phi ( B + \eps \int_0^{\cdot} f_s \rd s) \right) = \E ( D_F \Phi ( B))
\end{align*}
where $F = \int_0^{\cdot} f_s \rd s$. The calculation above is formal but can be made rigorous ($DB$ stands for the ``Lebesgue measure" on the space of continuous paths, which does not exist). The generalization to the Brownian motion on skew Hermitian matrices is straightforward and we have,  
\begin{align}
\int D_F \Phi (B) e^{ - \frac{N}{2} \int_0^1 \| \rd B_s \|_{\mathfrak{R}}^2} \mc{D}B  & = - \int  \Phi (B) D_F \left( e^{ - \frac{N}{2} \int_0^1 \| \rd B_s \|_{\mathfrak{R}}^2} \right) \mc{D}B \nonumber \\
&  = N \int \Phi(B) \int_0^1 \langle f_s, \rd B_s \rangle_{\mathfrak{R}}  e^{ - \frac{N}{2} \int_0^1 \| \rd B_s \|_{\mathfrak{R}}^2} \mc{D}B 
\end{align}
where $\mc{D}B$ formally stands for the Lebesgue measure on skew Hermitian valued continuous paths. The necessity of $N = \sigma^{-2}$  in the potential $V(B) = \frac{1}{2\sigma^2} \int_0^1 \| \rd B_s \|_{\mathfrak{R}}^2$ can be checked by computing, with $F_t = \ii t I$ and recalling \eqref{def:shbm},
$$
\sigma^2 N t = \sigma^2 \int_0^t \| F_s' \|_{\mathfrak{R}}^2 \rd s = \Var \int_0^t \langle \rd F_s, \rd B_s \rangle_{\mathfrak{R}}   = \Var \int_0^t \re ( \Tr (\overline{\ii I} \rd B_s)) = t.
$$
The Girsanov theorem gives an extension to predictable processes.
\begin{lemma}[Integration by parts for $(B_t)$]\label{lem:IBP} Consider a predictable bounded and continuous skew Hermitian valued process $(f_s)$ and set $F_t := \int_0^t f_s \rd s$. Suppose that $\Phi(B) \in L^2(\mb{P})$ is measurable with respect to $B$ and that $D_F\Phi(B)$ exists almost surely and in $L^2(\mb{P})$. Then, 
$$
\E \left[ D_F \Phi(B) \right] = N \E \left[ \Phi(B) \int_0^t \langle f_s, \rd B_s \rangle_{\mathfrak{R}} \right].
$$
\end{lemma}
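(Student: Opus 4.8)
The plan is to make rigorous the formal Cameron--Martin computation displayed just before the statement, replacing the nonexistent ``Lebesgue measure'' $\mc{D}B$ by a genuine tilting of $\mb{P}$ via the Girsanov theorem. \emph{Step 1 (the tilted measure).} For $\eps\in\mb{R}$ I would introduce the continuous local martingale $M_t=N\int_0^t\langle f_s,\rd B_s\rangle_{\mathfrak{R}}$ and its Dol\'eans--Dade exponential $Z_t^{(\eps)}=\mc{E}(\eps M)_t$. Using that $\{\sqrt N X_1,\dots,\sqrt N X_{N^2}\}$ is an orthonormal basis of the skew-Hermitian matrices for $\langle\cdot,\cdot\rangle_{\mathfrak{R}}$ --- so that $\sum_k\langle g,X_k\rangle_{\mathfrak{R}}X_k=N^{-1}g$ for every skew-Hermitian $g$, hence $\rd\langle M\rangle_t=N\|f_t\|_{\mathfrak{R}}^2\,\rd t$ --- one gets
\[
Z_t^{(\eps)}=\exp\Big(\eps N\int_0^t\langle f_s,\rd B_s\rangle_{\mathfrak{R}}-\tfrac{\eps^2 N}{2}\int_0^t\|f_s\|_{\mathfrak{R}}^2\,\rd s\Big),
\]
which is exactly the factor appearing in the formal computation. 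Since $(f_s)$ is bounded on $[0,t]$, $\langle\eps M\rangle_t$ is deterministically bounded, so $(Z_s^{(\eps)})_{0\le s\le t}$ is a true martingale and $\rd\mb{Q}^{(\eps)}=Z_t^{(\eps)}\rd\mb{P}$ defines a probability measure on $\mathcal{F}_t$. The same basis identity gives $\langle B,\eps M\rangle_t=\eps\int_0^t f_s\,\rd s=\eps F_t$, so by Girsanov's theorem $\hat B_s:=B_s-\eps F_s$ is, under $\mb{Q}^{(\eps)}$, a Brownian motion with the same covariance structure as $B$ under $\mb{P}$.

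\emph{Step 2 (change of variables).} For $|\eps|$ small the shift $B\mapsto B+\eps F$ --- which, when $(f_s)$ is genuinely path-dependent, should be read as the solution map of the Volterra equation $Y=B+\eps\int_0^{\cdot}f_u(Y)\,\rd u$, well posed and with measurable inverse by a Gronwall/contraction argument, and trivial when $F$ is deterministic --- transports $\mb{P}$ onto $\mb{Q}^{(\eps)}$; concretely, for bounded $\mathcal{F}_t$-measurable functionals, and then by a monotone-class / $L^1$-approximation argument for every $\Phi(B)\in L^2(\mb{P})$,
\[
g(\eps):=\mb{E}\big[\Phi(B+\eps F)\big]=\mb{E}\big[Z_t^{(\eps)}\,\Phi(B)\big].
\]

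\emph{Step 3 (differentiate at $\eps=0$).} On the right-hand side, $\eps\mapsto Z_t^{(\eps)}$ is smooth with $\partial_\eps Z_t^{(\eps)}\big|_{\eps=0}=N\int_0^t\langle f_s,\rd B_s\rangle_{\mathfrak{R}}$; boundedness of $f$ on $[0,t]$ makes $Z_t^{(\eps)}$ and $\sup_{|\eps|\le\delta}|\partial_\eps Z_t^{(\eps)}|$ have finite moments of all orders, so Cauchy--Schwarz against $\Phi(B)\in L^2$ and dominated convergence license differentiation under the expectation and give $g'(0)=N\,\mb{E}\big[\Phi(B)\int_0^t\langle f_s,\rd B_s\rangle_{\mathfrak{R}}\big]$. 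On the left-hand side, the hypothesis that $\eps^{-1}(\Phi(B+\eps F)-\Phi(B))\to D_F\Phi(B)$ in $L^2(\mb{P})$, hence in $L^1(\mb{P})$, gives $g'(0)=\mb{E}[D_F\Phi(B)]$. Equating the two expressions for $g'(0)$ proves the lemma.

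I expect the main obstacle to be Step 3, namely the two interchanges of differentiation and expectation: on the right it is routine once the (sub-Gaussian) moment bounds for $Z_t^{(\eps)}$ are recorded, but on the left it is precisely where the hypothesis that $D_F\Phi(B)$ exists \emph{in} $L^2(\mb{P})$, and not merely almost surely, is essential --- it upgrades the a.s.\ derivative to an $L^1$-limit of difference quotients, which is what allows pulling the limit through $\mb{E}$. A secondary, purely technical point, relevant only for genuinely path-dependent $(f_s)$, is the well-posedness and measurable invertibility of the Volterra shift in Step 2, which for small $\eps$ follows from a standard fixed-point argument and is vacuous for the deterministic directions $F$ used in the loop equations.
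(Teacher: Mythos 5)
Your overall strategy (rigorize the Cameron--Martin computation via Girsanov and differentiate the tilted measure at $\eps=0$) is indeed what the paper gestures at --- the paper offers no written proof, just the formal calculation followed by ``The Girsanov theorem gives an extension to predictable processes'' --- but your Girsanov tilt points the wrong way, and this creates a genuine gap in Step~3 that you half-notice but misdiagnose.

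The issue is with the identity in Step~2. For genuinely path-dependent $(f_s)$, tilting by $Z_t^{(\eps)}=\mc{E}(\eps M)_t$ makes $\hat B=B-\eps F$ a Brownian motion under $\mb{Q}^{(\eps)}$, hence $\E\big[Z_t^{(\eps)}\Phi(B)\big]=\E\big[\Phi(S_\eps(B))\big]$ where $S_\eps$ is the Volterra solution map you describe. But $\E\big[\Phi(S_\eps(B))\big]\neq\E\big[\Phi(B+\eps F(B))\big]$ for $\eps\neq0$ when $F$ is path-dependent (a direct check with, say, $f_s=B_1\,\ind{s\in(1,2]}+B_2\,\ind{s\in(2,3]}$ and $\Phi(B)=B_3^2$ shows the second derivatives at $\eps=0$ already disagree). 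You anticipate this and redefine ``$B+\eps F$'' as $S_\eps(B)$, but then Step~3 silently uses the lemma's hypothesis --- $L^2$-convergence of $\eps^{-1}\bigl(\Phi(B+\eps F)-\Phi(B)\bigr)$ with the \emph{literal} shift, which is what matches Lemma 5.1's definition of $D_F U_t$ --- to compute $g'(0)$, whereas your $g(\eps)=\E[\Phi(S_\eps(B))]$ involves the Volterra shift. Bridging the two requires showing $\eps^{-1}\bigl(\Phi(S_\eps(B))-\Phi(B+\eps F)\bigr)\to0$, i.e.\ a form of Fr\'echet differentiability of $\Phi$ rather than mere directional differentiability, which the lemma does not assume. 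You also dismiss this as ``vacuous for the deterministic directions $F$ used in the loop equations,'' but the directions in Proposition~\ref{prop:Girsanov} and in the proof of Lemma~\ref{lem:loopEqn} are $h_s=\frac{w_{r-s}U_s}{(w_{r-s}-U_s)^2}$ and the like, which are path-dependent; this point is not vacuous in the applications.

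The fix is to flip the sign of the tilt. Set $L_t^{(\eps)}=\mc{E}(-\eps M)_t$ with $M_t=N\int_0^t\langle f_s,\rd B_s\rangle_{\mathfrak{R}}$. Your computation of $\langle B,M\rangle_t=F_t$ is correct, so by Girsanov, under $\rd\mb{P}^{(\eps)}=L_t^{(\eps)}\rd\mb{P}$ the process $\tilde B:=B-\langle B,-\eps M\rangle=B+\eps F(B)$ --- the \emph{literal} shift --- is a Brownian motion. Since $\Phi(B+\eps F(B))=\Phi(\tilde B)$ pathwise, one gets, for every small $\eps$,
\[
\E\big[L_t^{(\eps)}\,\Phi(B+\eps F)\big]=\E^{\mb{P}^{(\eps)}}\big[\Phi(\tilde B)\big]=\E\big[\Phi(B)\big],
\]
with no Volterra inversion anywhere and no extra regularity needed on $\Phi$. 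Differentiating at $\eps=0$ --- the right-hand interchange now being trivial, the left-hand interchange licensed exactly as you argue by the $L^2$-convergence hypothesis and the moment bounds on $L_t^{(\eps)}$ --- gives $\E[-M_t\Phi(B)]+\E[D_F\Phi(B)]=0$, which is the lemma. In short: keep Steps~1 and~3 essentially as written, but replace $\mc{E}(\eps M)$ by $\mc{E}(-\eps M)$ and the claimed change-of-variables by the identity above; this removes the gap and also simplifies the argument.
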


\begin{proposition}[Integration by parts for $(U_t)$]
\label{prop:Girsanov}
With $F = \int_0^{\cdot} f_s ds$ and $\Phi$ as above, we have
\begin{equation}\label{eqn:gir1}
\E \left[ \Phi(B) \int_0^t \langle f_s, \rd B_s \rangle_{\mathfrak{R}} \right] = \frac{1}{N} \E \left[ D_F \Phi(B) \right], \qquad \text{and} \qquad D_F U_t = \sqrt{2} \left( \int_0^t U_s f_s U_s^{-1} \rd s \right) U_t.
\end{equation}
Furthermore, for a matrix valued bounded  and continuous predictable process $(h_s)$  (not necessarily skew Hermitian), and with a finite number of positive times $t_j$ and $\mathscr{C}^{1}$  functions $g_j$ on the unit circle, we have
$$
\E \left[  \int_0^t \Tr (h_s \rd B_s)  \prod_i e^{\Tr g_i(U_{t_i})} \right] =   - \frac{\sqrt{2}}{N} \sum_j \E \left[ \Tr \left( g_j'(U_{t_j}) U_{t_j} \int_0^{\min (t, t_j)} U_s h_s U_s^{-1}  \rd s \right) \prod_i e^{\Tr g_i(U_{t_i})} \right].
$$
\end{proposition}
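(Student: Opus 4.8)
The plan is to prove the three assertions in order, since each feeds into the next. First, the integration-by-parts formula for $(B_t)$, i.e. $\E[\Phi(B)\int_0^t \langle f_s, \rd B_s\rangle_{\mathfrak{R}}] = \frac{1}{N}\E[D_F\Phi(B)]$, is exactly the content of Lemma \ref{lem:IBP} applied to the deterministic (hence predictable) shift $(f_s)$; no new work is required beyond checking the hypotheses of that lemma, namely that $\Phi(B)\in L^2(\mathbb{P})$ and $D_F\Phi(B)$ exists in $L^2$ and almost surely. The second assertion, the explicit formula $D_F U_t = \sqrt{2}(\int_0^t U_s f_s U_s^{-1}\rd s)U_t$, is precisely Lemma \ref{lem:der}, proved earlier in the section, provided $(f_s)$ is skew-Hermitian valued (which it must be for the shift $B+\eps F$ to stay in the space of skew-Hermitian paths; if $(f_s)$ is only predictable-bounded-continuous skew-Hermitian we are exactly in the setting of that lemma).

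The main work is the last formula. First I would take $\Phi(B) = \prod_i e^{\Tr g_i(U_{t_i})}$ and, given the matrix-valued (not necessarily skew-Hermitian) process $(h_s)$, decompose $h_s = \frac{1}{2}(h_s - h_s^*) + \frac{1}{2}(h_s + h_s^*)$ into its skew-Hermitian part $f_s^{(1)}$ and $\ii$ times a Hermitian part; equivalently write $h_s = f_s + \ii k_s$ where both $f_s$ and $\ii k_s$ are obtained from skew-Hermitian directions by the real/imaginary trick used already in the paper (the basis $\{X_1,\dots,X_{N^2}\}$ spans all skew-Hermitian matrices, and $\langle \cdot,\cdot\rangle_\mathfrak{R}$ pairs $\Tr(h_s \rd B_s)$ against it). Concretely, since $\rd B_s$ is skew-Hermitian, $\Tr(h_s \rd B_s) = \langle \bar h_s^T, \rd B_s\rangle = \Tr(\overline{\bar h_s^T}^T \rd B_s)$; writing this real-linearly in the coordinates of $\rd B_s$ lets me apply the first assertion with $f_s$ replaced by the (generally complex, but handled componentwise over the real basis) direction determined by $h_s$. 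This reduces $\E[\int_0^t \Tr(h_s\rd B_s)\,\Phi(B)]$ to $\frac{1}{N}\E[D_{H}\Phi(B)]$ where $D_H$ is the Fréchet derivative along the appropriate direction, and the explicit UBM-derivative formula (extended by the same real-linearity from the skew-Hermitian case of Lemma \ref{lem:der}) gives $D_H U_t = \sqrt{2}(\int_0^{\min(t,\cdot)} U_s h_s U_s^{-1}\rd s)U_{(\cdot)}$ evaluated at each $t_i$. Crucially $D_H U_{t_i}$ only sees the shift up to time $t_i$, which produces the $\int_0^{\min(t,t_j)}$ truncation.

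Then I would apply the chain rule: $D_H\big(\prod_i e^{\Tr g_i(U_{t_i})}\big) = \sum_j \Tr\big(g_j'(U_{t_j})\, D_H U_{t_j}\big)\prod_i e^{\Tr g_i(U_{t_i})}$, using that $\frac{\rd}{\rd\eps}\Tr g(M + \eps N) = \Tr(g'(M)N)$ for $\mathscr{C}^1$ functions $g$ on the circle (justified by the Helffer–Sjöstrand / holomorphic functional calculus representation recorded in the Preliminaries, or directly by diagonalization). Substituting $D_H U_{t_j} = \sqrt{2}\big(\int_0^{\min(t,t_j)} U_s h_s U_s^{-1}\rd s\big)U_{t_j}$ and moving the constants out yields exactly the claimed identity, with the sign: note the $-1$ does not appear from the $(B_t)$ IBP but I should double-check — in the displayed target formula there is a minus sign, which must come from the direction convention; I would trace it to the fact that pairing $\Tr(h_s\rd B_s)$ against the real inner product picks up $\overline{h_s}$, and careful bookkeeping of conjugation in $\langle\cdot,\cdot\rangle_{\mathfrak{R}}$ against $D_{\bar H}$ versus $D_H$ produces the sign; alternatively the minus is absorbed into whether one shifts by $+\eps F$ or $-\eps F$. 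The main obstacle is precisely this: handling the non-skew-Hermitian $(h_s)$ cleanly, i.e. making the real-bilinear reduction to Lemma \ref{lem:IBP} rigorous and getting all conjugations and signs consistent, together with verifying the $L^2$ and almost-sure differentiability hypotheses for $\Phi(B) = \prod_i e^{\Tr g_i(U_{t_i})}$ (which follows since $\Tr g_i(U_{t_i})$ is bounded by $N\|g_i\|_\infty$ and the map $B\mapsto U_{t_i}$ is Fréchet differentiable in $L^2$ by the argument in Lemma \ref{lem:der}).
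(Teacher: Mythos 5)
Your plan coincides with the paper's proof: the first two assertions are read off from Lemmas \ref{lem:der} and \ref{lem:IBP}, and the third follows by splitting $\Tr(h_s\,\rd B_s)$ into real and imaginary parts, each a real pairing of $\rd B_s$ against a skew-Hermitian matrix, then applying \eqref{eqn:gir1} separately and the chain rule to $\prod_i e^{\Tr g_i(U_{t_i})}$ (with the $\min(t,t_j)$ truncation coming, as you say, from the fact that $D_F U_{t_j}$ depends on the shift only up to time $t_j$). The one place you hedge is exactly where the proof has to be precise: since $\langle M,N\rangle_{\mathfrak{R}} = \re\Tr(M^*N)$ and $p_S(h)^* = -p_S(h)$, one has $\re\Tr(h\,\rd B) = -\langle p_S(h),\rd B\rangle_{\mathfrak{R}}$, while $\im\Tr(h\,\rd B) = \re\Tr(-\ii h\,\rd B) = \langle \ii\,p_H(h),\rd B\rangle_{\mathfrak{R}}$ (the $p_S$ contribution vanishes because $-\ii\,p_S(h)$ is Hermitian and hence orthogonal to $\rd B$). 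Applying \eqref{eqn:gir1} along the two skew-Hermitian directions $-p_S(h)$ and $\ii\,p_H(h)$, recombining with the outer factor $\ii$ on the imaginary part, and using the real-linearity of $f\mapsto\int_0^t U_s f U_s^{-1}\rd s$ gives the effective direction $-p_S(h) + \ii\cdot\ii\,p_H(h) = -h$; this single identity produces both the overall minus sign and the appearance of the full $h_s$ (not merely its skew-Hermitian part) in the final formula. Your alternative guess --- that the sign might be absorbed by shifting by $-\eps F$ instead of $+\eps F$ --- is not correct: the shift convention is fixed once and for all in Lemma \ref{lem:der}, and the minus is forced by the conjugation built into $\langle\cdot,\cdot\rangle_{\mathfrak{R}}$ together with skew-Hermiticity of $p_S(h)$.
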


\begin{proof}
The first statement is immediate from the lemmas \ref{lem:IBP} and \ref{lem:der}. For the second statement, 
we denote by $p_S$ (resp. $p_H$) the projection on skew Hermitian (resp. Hermitian) matrices. Since these spaces are orthogonal for $\langle \cdot, \cdot \rangle_{\mf{R}}$ and $dB$ is skew Hermitian,
$$
\re ( \Tr (h dB)) = - \re (\Tr (p_S(h)^* dB)) + \re \Tr (p_H(h) dB)  = - \langle p_S(h) , dB \rangle_{\mf{R}} + 0.
$$
Given that $\ii : M \mapsto \ii M$ maps Hermitian matrices to skew Hermitian ones,  and skew Hermitian matrices to Hermitian ones,  we have
$$
\im ( \Tr (h dB)) = \re ( - \ii \Tr(h dB)) = \re ( \Tr (- \ii p_S(h) dB)) + \re ( \Tr (- \ii p_H(h) dB))  = 0 + \langle \ii p_H(h), dB \rangle_{\mf{R}}.
$$
We suppose that the product $\prod_i$ reduces to one term since the generalization is straightforward. We have
\begin{multline*}
\E  \int_0^t \Tr (h_s \rd B_s)  e^{\Tr g(U_t)}  = \E  \int_0^t  \langle - p_S(h_s) , \rd B_s \rangle_{\mf{R}} e^{\Tr g(U_t)} + \ii \E  \int_0^t   \langle \ii p_H(h_s), \rd B_s \rangle_{\mf{R}}  e^{\Tr g(U_t)} \\
 = \frac{\sqrt{2}}{N}  \E \Tr \left( g'(U_t) U_t \int_0^t U_s (- p_S(h_s) ) U_s^{-1}  \rd s \right)e^{\Tr g(U_t)} + \frac{\sqrt{2} }{N} \ii  \E\Tr \left( g'(U_t) U_t \int_0^t U_s \ii p_H(h_s) U_s^{-1} \rd s \right) e^{\Tr g(U_t)}  \\
= -  \frac{\sqrt{2}}{N} \E \Tr \left( g'(U_t) U_t \int_0^t U_s h_s U_s^{-1} \rd s e^{\Tr g(U_t)} \right).
\end{multline*}
In the second equality, we used (\ref{eqn:gir1}) and the third equality follows from $-p_S(h) + \ii^2 p_H(h) = -h$.
\end{proof}

\subsection{Biased measures and error terms}. In this section, we will use the following a priori estimate by Johansson as an input.
\begin{lemma}[{\cite[Lemma 2.9]{Joh97}}]
\label{lem:Johansson}
If $f$ is real and $\|f\|_{{\rm H}}<\infty$, then
$$
\E\left[e^{{\rm Tr}f(U)}\right]\leq e^{N\hat f_0+ \frac{1}{2} \|f\|_{{\rm H}}^2}.
$$
\end{lemma}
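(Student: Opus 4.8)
The plan is to express $\E[e^{\Tr f(U)}]$ as a Toeplitz determinant through Heine's formula, factor it by Cauchy--Binet so that its monotonicity in $N$ becomes transparent, and then let $N\to\infty$, where the strong Szeg\H o theorem quoted in the introduction pins down the limit.

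\textbf{Reduction.} Since $\Tr\big((f-\hat f_0)(U)\big)=\Tr f(U)-N\hat f_0$, dividing by $e^{N\hat f_0}$ reduces the claim to the case $\hat f_0=0$, where it reads $\E[e^{\Tr f(U)}]\leq e^{\frac12\|f\|_{{\rm H}}^2}$. A routine approximation reduces further to $f$ with an absolutely summable Fourier series: replacing $f$ by its Abel means $f_r(\theta)=\sum_k\hat f_k r^{|k|}e^{\ii k\theta}$ one has $\widehat{(f_r)}_0=0$, $\|f_r\|_{{\rm H}}\leq\|f\|_{{\rm H}}$, and $\Tr f_r(U)\to\Tr f(U)$ as $r\uparrow 1$, so the general case follows by passing to the limit. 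Write $f=h+\bar h$ with $h(z)=\sum_{k\geq1}\hat f_k z^k$; since $\sum_{k\geq1}|\hat f_k|<\infty$, the function $\psi:=e^{h}$ is bounded and continuous on the closed unit disk, belongs to the Hardy space $H^2$, satisfies $\psi(0)=1$, and on the unit circle $e^{f}=|\psi|^2$, $e^{\Tr f(U)}=|\det\psi(U)|^2$. Heine's formula (equivalently Weyl integration combined with Andr\'eief's identity) gives
\[
\E\big[e^{\Tr f(U)}\big]=D_N(e^f)=D_N(|\psi|^2)=\det\Big(\big\langle z^{j}\psi,\,z^{k}\psi\big\rangle_{L^2(\mathbb{U})}\Big)_{j,k=0}^{N-1},
\]
the Gram determinant of $\psi,z\psi,\dots,z^{N-1}\psi$ inside $L^2$ of the circle.

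\textbf{Cauchy--Binet and monotonicity.} With $\psi(z)=\sum_{m\geq0}a_m z^m$, the vector $z^{j}\psi$ has coordinates $(a_{\ell-j})_{\ell\geq0}$ in the orthonormal basis $(z^{\ell})_{\ell\geq0}$ of $H^2$, so the Gram matrix above equals $V V^{*}$ with $V=(a_{\ell-j})_{0\leq j\leq N-1,\ \ell\geq0}$ (and $a_{<0}:=0$), a Hilbert--Schmidt operator since $\sum_m|a_m|^2=\|\psi\|_{L^2}^2<\infty$. Cauchy--Binet then yields
\[
D_N(|\psi|^2)=\det(VV^{*})=\sum_{0\leq \ell_0<\dots<\ell_{N-1}}\big|\det(a_{\ell_i-j})_{i,j=0}^{N-1}\big|^{2}=\sum_{\lambda:\,\ell(\lambda)\leq N}|b_{\lambda}|^{2},
\]
where the increasing $N$-tuples $(\ell_i)$ are re-indexed by partitions $\lambda$ with at most $N$ parts and $b_{\lambda}=\det(a_{\lambda_i-i+j})$ is the dual Jacobi--Trudi minor --- equivalently the coefficient of the Schur function $s_{\lambda}$ in the expansion of $\prod_i\psi(x_i)=\exp\!\big(\sum_{k\geq1}\hat f_k\,p_k\big)$ --- which does not depend on $N$. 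All summands being nonnegative, $N\mapsto D_N(|\psi|^2)$ is non-decreasing, whence
\[
D_N(e^f)=D_N(|\psi|^2)\ \leq\ \sup_{M\geq1}D_M(|\psi|^2)\ =\ \sum_{\lambda}|b_{\lambda}|^{2}.
\]

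\textbf{Conclusion.} By the strong Szeg\H o theorem, $D_M(e^f)\to e^{\frac12\|f\|_{{\rm H}}^2}$ as $M\to\infty$, so the supremum equals $e^{\frac12\|f\|_{{\rm H}}^2}$; hence $D_N(e^f)\leq e^{\frac12\|f\|_{{\rm H}}^2}$, and multiplying back by $e^{N\hat f_0}$ gives the lemma. (The supremum can also be evaluated directly, bypassing Szeg\H o: $\sum_\lambda|b_\lambda|^2=\langle g,\bar g\rangle$ in the Hall inner product on symmetric functions, where $g=\exp(\sum_{k\geq1}\hat f_k p_k)$ and $\bar g=\exp(\sum_{k\geq1}\overline{\hat f_k}\,p_k)$; expanding in power sums and using $\langle p_\mu,p_\nu\rangle=z_\mu\delta_{\mu\nu}$ with $z_\mu=\prod_k k^{m_k(\mu)}m_k(\mu)!$ gives $\langle g,\bar g\rangle=\prod_{k\geq1}e^{k|\hat f_k|^{2}}=e^{\sum_{k\geq1}k|\hat f_k|^2}=e^{\frac12\|f\|_{{\rm H}}^2}$, using $\hat f_{-k}=\overline{\hat f_k}$.)

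\textbf{Main obstacle.} Granting Heine's formula and Cauchy--Binet the argument is short; the one point needing genuine care is the approximation step, i.e.\ passing from a general $f$ with $\|f\|_{{\rm H}}<\infty$ to one with summable Fourier series (and, relatedly, checking that $\prod_i\psi(x_i)$ lies in the $L^2$-closure of the symmetric functions, which is automatic here since $\sum_\lambda|b_\lambda|^2=e^{\frac12\|f\|_{{\rm H}}^2}<\infty$). A different but equally viable route avoids symmetric functions entirely: expand the exponential and bound each joint moment of the power traces $\Tr(U^k)$ by the corresponding moment of independent complex Gaussians, uniformly in $N$, via the Diaconis--Shahshahani moment identities; there the main input is precisely this uniform Gaussian domination.
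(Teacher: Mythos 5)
The paper does not prove this inequality itself: it is quoted directly from Johansson's 1997 Annals paper \cite{Joh97} (Lemma~2.9), so there is no internal argument here to compare against. Your proof is correct, and its core mechanism --- Heine's formula, the factorization $e^f=|\psi|^2$ with $\psi=e^h$ in the Hardy space of the disk, Cauchy--Binet giving $D_N(e^f)=\sum_{\ell(\lambda)\leq N}|b_\lambda|^2$, monotonicity in $N$, and identification of the supremum with $e^{\frac12\|f\|_{\rm H}^2}$ via the Hall inner product (or the strong Szeg{\H o} theorem) --- is precisely the Gessel-type expansion on which Johansson's own treatment rests, so you have essentially reproduced the cited proof rather than found a different one. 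The one step worth making fully explicit, as you yourself flag, is the passage from the Abel means $f_r$ back to $f$: Poisson means converge a.e., the eigenangle law is absolutely continuous, so $\Tr f_r(U)\to\Tr f(U)$ a.s., and Fatou applied to the nonnegative variables $e^{\Tr f_r(U)}$ together with $\|f_r\|_{\rm H}\leq\|f\|_{\rm H}$ closes the limit. This regularization is not cosmetic, since for general $f$ with $\|f\|_{\rm H}<\infty$ the function $\psi=e^h$ need not lie in $L^2$ of the circle, so the Gram-determinant and Cauchy--Binet manipulations require the summable-Fourier-coefficient case first.
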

\noindent  An immediate consequence with $f$ chosen as $g$ below (\ref{eqn:gaps}) is 
\begin{equation}\label{eqn:rigidity}
\mathbb{P}(\btheta(s)\in\mathscr{G})>1-e^{-(\log N)^{10}},
\end{equation}
where $\mscr{G}$ is defined as the rigidity event at the beginning of Section \ref{subsec:rig}.

Let $\ell^h$ be a regularization of $\log$ (around the singularity $h$) on scale $\iota>N^{-2}$.
Let 
\begin{equation}\label{eqn:D}
\mc{D}=\mc{D}_{J,\delta,C}
\end{equation}
 denote the family of laws biased by $ e^{\sum_{1 \leq j \leq J} \Tr f_j(U_{t_j})}$ where 
$J$ is fixed,
$f_j$ is either an element in  $\mathscr{S}_{\delta,C}$ (see Definition \ref{subsec:fctSpaces}) or $f_j=\lambda\, \ell^h$, $0\leq\lambda\leq C$ and $h\in[0,2\pi)$,
$t_j\in[0,C]$.

For any $\mb{P}$ in $\mc{D}$ we denote by $\mb{E}$ the expected value under $\mb{P}$,  and the dependence in $f_j,t_j$ will sometimes be emphasized trough  $\mathbb{P}_{\bf{f}}, \mathbb{E}_{\bf{f}}$. 

We will  use the following a priori estimates without systematically referencing them,  when transferring an estimate for a biased measure from the Haar measure: Under the Haar measure there exists $C'$ such that, uniformly in $f\in\mathscr{S}_{\delta,C}$ and $0\leq \lambda\leq C$, we have
\begin{equation}\label{eqn:polybound}
\E[e^{{\rm Tr} f-\E({\rm Tr}f)}]\leq e^{C'(\log N)^2},\ \E[e^{\lambda \rm Tr \ell^{h}-\lambda \E({\rm Tr}\ell^{h})}]\leq e^{C'\log N}.
\end{equation}
Both inequalities follows from Lemma \ref{lem:Johansson} and $\|f\|_{{\rm H}}^2\leq C(\log N)^2$ (due to our assumption $f\in\mathscr{S}_{\delta,C}$), $\|\ell^{h}\|_{{\rm H}}^2\leq C\log N$ (see Lemma \ref{lem:regularity}).  As an example of application,  Equations  (\ref{eqn:rigidity}),  (\ref{eqn:polybound}),  H\"older and Jensen inequalities imply $\mathbb{P}_{\bf f}(\btheta(s)\in\mathscr{G})>1-e^{-(\log N)^5}$.

\medskip

For the following lemma,  we recall the notation $d(v,w)=\max(|v-w|,|v-\frac{w}{|w|^2}|)$ and  $R = (\log N)^{1+c}$.

\begin{lemma}[Application of the full rank projection estimate]
\label{lem:loclaw-rig}
Let $\eps \in (0,1)$ be arbitrary. Uniformly in $\eta_v,\eta_w\in[0,1/2]$,  $\mb{P}_{\bf f} \in \mc{D}$,  $-R \leq s \leq C$ and $\max (0,s) \leq t \leq C$, we have
\begin{multline*}
\frac{1}{N}\E\left[{\rm Tr}\left(
\frac{v+U_s}{v-U_s}\frac{U_t}{(w-U_t)^2}
\right)\right]=\frac{1}{N}\frac{w_{t-s}}{w}\E\left[
{\rm Tr}\left(
\frac{v+U_s}{v-U_s}\frac{U_{s}}{(w_{t-s}-U_s)^2}
\right)  \right]\\
+
\frac{\OO(N^\varepsilon)(1+|w_{t-s}|)}{ \eta_w\,d(v,w_{t-s})\,\min(1,N\eta_v)}(\frac{1}{\eta_w}+\frac{1}{\sqrt{\eta_w\min(\eta_{w_{t-s}},\eta_v)}})
\end{multline*}
\end{lemma}

\begin{proof}
We first prove the result under the unbiased measure, i.e.  ${\bf f}=0$. We have
$$
\frac{1}{N}\E\left[{\rm Tr}\left(
\frac{v+U_s}{v-U_s}\frac{U_t}{(w-U_t)^2}
\right) \right]=-\frac{1}{2N}\partial_w\E\left[{\rm Tr}\left(
\frac{v+U_s}{v-U_s}\frac{w+U_t}{w-U_t}
\right) \right].
$$
With the Cauchy formula, under the event $E$ from Proposition \ref{prop:avIso} we have
\begin{multline*}
\left|\frac{1}{N}\E\left[{\rm Tr}\left(
\frac{v+U_s}{v-U_s}\frac{U_t}{(w-U_t)^2}
\right)  \mathds{1}_E-\frac{w_{t-s}}{w}
{\rm Tr}\left(
\frac{v+U_s}{v-U_s}\frac{U_{s}}{(w_{t-s}-U_s)^2}
\right) \mathds{1}_E \mid \btheta(s)\in\mathscr{G} \right]\right|\\
\leq
\frac{N^\varepsilon(1+|w_{t-s}|)}{\eta_w\,d(v,w_{t-s})\,\min(1,N\eta_v)}(\frac{1}{\eta_w}+\frac{1}{\sqrt{\eta_w\min(\eta_{w_{t-s}},\eta_v)}}).
\end{multline*}
%
%
Furthermore, the same result holds without $\mathds{1}_E$ nor the conditioning, by using the trivial estimate on the integrand (e.g.  $N^6/(\eta_v\eta_w^2)$), the rigidity estimate (\ref{eqn:rigidity}) and  the isotropic law from Proposition  \ref{prop:avIso}: $\mb{P} (E \mid \btheta(s)\in\mathscr{G})\geq 1-e^{-(\log N)^D}$.  This completes the proof for the equilibrium measure. 

We now consider $\mb{P}_{{\bf f}} \in \mc{D}$.  On the event $E$, the same estimates hold for $\mathbb{P}_{{\bf f}}(\btheta(s)\in\mathscr{G})$ and $\mathbb{P}_{{\bf f}}(E \mid \btheta(s)\in\mathscr{G})$,   as explained before the statement of the lemma, so the above proof applies to the biased measures.
\end{proof}

\subsection{Asymptotics of the loop equations}. The following Lemma will be the main tool for the ``gluing'' operation mentioned in subsection \ref{outline}. It relies on the integration by parts formula from Proposition \ref{prop:Girsanov}, the consequences of the local law and rigidity estimates for biased measures Lemma \ref{lem:loclaw-rig}, the first section in the appendix to express our result in terms of Fourier coefficients, and various smoothings.

\begin{lemma}
\label{lem:loopEqn}
Let $C>0$ and $\delta\in(0,1)$ be arbitrary.  Consider $h_r\in \mathscr{S}_{\delta,C}$   (see Definition \ref{subsec:fctSpaces})
where $r\in I$, $I$ a set of at most $C$ times in $[0,C]$, possibly $N$-dependent.
Let $\mb{P}_{\bf{f}}$ denote the law of the unitary Brownian motion at equilibrium  biased by $ e^{\sum_{t\in\mathscr{B}} \Tr f_t(U_{t})+\sum_{z=t+\ii\theta\in\mathscr{A}}\gamma_z \Tr \ell^\theta_+(U_{t})}$ where $\mathscr{A}$,  $\mathscr{B}$ and $f_s$
are as defined in Theorem \ref{thm:FH} and $\ell_{+}$ is as defined in Definition \ref{def:SmoothLog} with regularization scale $N^{-1-\alpha}$, $\alpha=\delta/6$.

We also assume that 
for any $t+\ii\theta\in\mathscr{A}$ we have (see \eqref{eqn:Hilbert} for the definition of ${\rm H}$)
\begin{equation}\label{eqn:assumed}
\sum_{r\in I}{\rm H}\,{\rm P}_{|r-t|} h_r(e^{\ii\theta})=0.
\end{equation}
Then for any small $\e>0$ we have
\begin{multline}\label{eqn:loopEqn}
\sum_{r\in I}\E_{\bf{f}} \left[ \Tr h_r(U_{r}) -N \dashint h_r\right]  =  \sum_{r\in I,t+\ii\theta\in\mathscr{A},k\in\mathbb{Z}}  e^{- |k| |t-r|}  |k| {(\widehat{\ell_+^\theta})}_{k} \hat{(h_r)}_{-k}  +
\sum_{r\in I,t\in\mathscr{B},k\in\mathbb{Z}}  e^{- |k| |t-r|}  |k| {{\hat{(f_t)}}}_{k} \hat{(h_r)}_{-k}  \\
+ \OO_{\delta,C}(N^{-\delta/4+\e}).
\end{multline}
\end{lemma}

\begin{remark}
In the above statement, every $\ell_{+}$ can be replaced by $\ell_{-}$ without any change to the proof.
\end{remark}

\begin{proof} 
To simplify the notations we assume that $\mathscr{B}=\varnothing$,  as the functions $f_s$ 
are more regular that the logarithmic singularities $\ell_+^\theta$, which represent the key difficulty. The contribution of the $f_s$'s can be included following the method below with only notational changes.
We also abbreviate $\ell_+$ into $\ell$ along the proof.\\

\noindent{\it First step: integral representation}. 
First,  from (\ref{eqn:afterChar}) we have
$$
m_r(w)=  m_{-R}(w_{r+R}) + \int_{-R}^r\frac{2\sqrt{2}}{N}{\rm Tr}\left(\frac{ w_{r-s} U_s}{(w_{r-s}-U_s)^2}\rd B(s)\right) +\int_{-R}^r(m_s(w_{r-s})-s(w))w_{r-s}\partial_z m_s(w_{r-s})\rd s.
$$
Remember the decomposition of $h_r$ into $O(\log N)$ many functions from the sets ${\rm A}_{\xi,C}$,   $(\xi\in[\delta,1])$. We start evaluating the trace of a function $h$ from $\rm A_{\xi,C}$ for an arbitrary $\xi\in[\delta,1]$. Together with the representation (\ref{eqn:HS-rep}),  this implies
 (remember we denote   $s(z)=\mathds{1}_{|z|>1}-\mathds{1}_{|z|<1}$)
that for any $h$
\begin{align}
\label{eqn:start}
 \E_{\bf{f}}( \Tr h(U_r) )- N \dashint h&= -\frac{N}{2\pi}
\int\partial_{\bar w}\tilde h(w)\cdot \E_{\bf{f}} [ m_{-R}(w_{r+R})-s(w) ] \frac{\rd m(w)}{w}\\
-
& \frac{N}{2\pi}\label{eqn:start2}
\int\partial_{\bar w}\tilde h(w)\cdot  \int_{-R}^r \E_{\bf{f}} \left[ (m_s(w_{r-s})-s(w))w_{r-s}\partial_z m_s(w_{r-s}) \right] \rd s  \frac{\rd m(w)}{w}
\\ 
\label{eqn:start3}
& -\frac{2\sqrt{2}}{2\pi}
\int\partial_{\bar w}\tilde h(w)\cdot \E_{\bf{f}} \int_{-R}^r \Tr \left(\frac{ w_{r-s} U_s}{(w_{r-s}-U_s)^2}\rd B(s)\right)  \frac{\rd m(w)}{w}
\end{align}
where we have chosen the second order quasi-analytic extension $\tilde h$ (contrary to the first order in (\ref{eqn:dbar})) and $N^{-1+\xi}$ as the scale of the associated bump function $\chi$ in 
\begin{align} \label{eqn:2nd_order_quasi}
\tilde{h}(re^{i\theta})=\left(h(e^{i\theta})-\ii h'(e^{i\theta})\log r - h^{\prime\prime}(e^{i\theta})\frac{(\log r)^2}{2}\right)\chi(r).
\end{align}
 The first term (\ref{eqn:start}) above is easily shown to be subpolynomial in $N$ because $w_{r+R}$ is either superpolynomially large or close to $0$. The second term (\ref{eqn:start2}) is also negligible by the bound 
\begin{equation}\label{eqn:HS3}
|\partial_{\bar w}\tilde h|\leq (|h|+\eta_w|h'|+\eta_{w}^{2}|h^{\prime\prime}|)\cdot |\chi'|+|h^{\prime\prime\prime}|\eta_w^2 \cdot|\chi|.
\end{equation}
Indeed denoting $\eta_0=N^{-1+\xi}$, with (\ref{eqn:proba2}) we obtain
\[
|\eqref{eqn:start2}|\lesssim N^{1+\e}\iint_{[0,\eta_0]^2}\left((1+\frac{\eta}{\eta_0}+\frac{\eta^2}{\eta_0^2})\cdot \frac{\mathds{1}_{\eta_0/2<\eta}}{\eta_0}+\frac{\eta^2}{\eta_0^3} \right)\cdot\int_{0}^\infty\frac{1}{N(\eta+s)}\frac{1}{N(\eta+s)^2}\rd s\rd\eta\rd\theta\lesssim \frac{N^{\varepsilon}}{N\eta_0}.
\]

\noindent{\it Second step: injecting the integration by parts formula}. To evaluate (\ref{eqn:start3}) we rely on the integration by parts formula from Proposition \ref{prop:Girsanov}:
\begin{equation}\label{eqn:inter11}
\E_{\bf{f}}\left[
\int_{-R}^r \Tr \left(\frac{ w_{r-s} U_s}{(w_{r-s}-U_s)^2}\rd B(s)\right) 
\right]
= -  \frac{1}{N} \sum_{t+\ii\theta\in\mathscr{A}}
\E_{\bf{f}}\left[{\rm Tr}\left(
{\ell^{\theta}}'(U_{t}) V_r^{t}(w)
\right)\right]
\end{equation}
where, for $1 \leq j \leq N$,
$$
V_r^{t}(w)=\sqrt{2}\int_{-R}^{t \wedge r} U_s \frac{w_{r-s} U_s}{(w_{r-s}-U_s)^2} U_s^{-1 }ds \cdot U_{t}=\sqrt{2}\int_{-R}^{t \wedge r} \frac{w_{r-s} U_s}{(w_{r-s}-U_s)^2}\rd s\cdot U_{t}.
$$

 We can write ${\ell^\theta}=\sum g_m$ where the sum is over $\OO(\Delta)$ terms (recall $\Delta$ in \eqref{eq:phi-delta}) and $g_m$ supported on an arc of length  $1/(Ne^m)$, $-\log N\leq m\leq \alpha\log N$,   $\sum_{k=0}^3 (Ne^{m})^k\|g_m^{(k)}\|_\infty\leq C\log N$.  
The number of considered $g_m$'s is $\OO((\log N)^2)$ thanks to the initial smoothing on scale $N^{-1-\alpha}$ in Section \ref{sec:subsmooth}. We can therefore assume, until further notice, without loss of generality that ${\ell^\theta}$ term inside the $\Tr$ coincides with such a $g_m$, and we define $\tilde\varepsilon=e^{-m}/N$.

  From (\ref{eqn:HS-rep}) ($|v|=1$ in $v {{\ell^\theta}}'(v)$ below) we can write
\begin{align}
\frac{1}{N}\E_{\bf{f}}\left[{\rm Tr}\left(
 {{\ell^\theta}}'(U_{t}) V_r^{t}(w))
\right)\right]& =\frac{\sqrt{2}}{N}\E_{\bf{f}}\left[{\rm Tr}\left(
U_{t} {{\ell^\theta}}'(U_{t})\int_{-R}^{t \wedge r} \frac{w_{r-s} U_s}{(w_{r-s}-U_s)^2}\rd s
\right)\right]\notag\\
& =
-\frac{\sqrt{2}}{2 \pi N}\int_{\mathbb{C}}
\int_{-R}^{t \wedge r}\E_{\bf{f}}{\rm Tr}\left(
\frac{v+U_{t}}{v-U_{t}}\frac{w_{r-s} U_s}{(w_{r-s}-U_s)^2}
\right)\rd s
\partial_{\bar v}\widetilde{ v {{\ell^\theta}}'}(v)\frac{\rd m(v)}{v},\label{eqn:interm100}
\end{align}
where for further error estimates it will be pertinent to choose $c=\tilde\varepsilon$ for $\chi_c$ in the definition of the above $\widetilde{v{{\ell^\theta}}'}$, here we use the first order quasi-analytic extension given in \eqref{eqn:HS-exp}.

We first use reversibility in the above expectation. 
In this way, instead of changing the very singular function $\frac{v+U_{t}}{v-U_{t}}$ into 
$\frac{v_{t-s}+U_{s}}{v_{t-s}-U_{s}}$ and collecting a problematic error term in $\eta_v^{-1}$
(the $v$ variable corresponds to the singular function ${\ell^\theta}$, while 
the $w$ variable corresponds to the smooth $h$), we will change the more regular function in $w$ and collect 
an error term $\eta_w^{-1}$, as in Lemma \ref{lem:loclaw-rig}.

More precisely, by reversibility the above expectation is
\begin{equation}\label{eqn:interm101}
\frac{\E e^{\sum_{t'+\ii\theta\in\mathscr{A}} \Tr \ell^\theta(U_{t'})}{\rm Tr}\left(
\frac{v+U_{t}}{v-U_{t}}\frac{w_{r-s} U_s}{(w_{r-s}-U_s)^2}
\right)}{\E e^{\sum_{t'+\ii\theta\in\mathscr{A}} \Tr \ell^\theta(U_{t'})}}
=
\frac{\E e^{\sum_{t'+\ii\theta\in\mathscr{A}} \Tr \ell^\theta(U_{t-t'})}{\rm Tr}\left(
\frac{v+U_{0}}{v-U_{0}}\frac{w_{r-s} U_{t-s}}{(w_{r-s}-U_{t-s})^2}
\right)}{\E e^{\sum_{t'+\ii\theta\in\mathscr{A}} \Tr \ell^\theta(U_{t-t'})}}.
\end{equation}
We denote the above right-hand side as  a biased measure $\E_{j,\bm{t}}{\rm Tr}\left(
\frac{v+U_{0}}{v-U_{0}}\frac{w_{r-s} U_{t-s}}{(w_{r-s}-U_{t-s})^2}
\right)$.  

With (\ref{eqn:start3}),  (\ref{eqn:inter11}), (\ref{eqn:interm100}) and (\ref{eqn:interm101}), we have proved that
$ \E_{\bf{f}}( \Tr h(U_r) )- N \dashint h$ is equal to
\[
  -\frac{1}{N\pi^2}\sum_{t+\ii\theta\in\mathscr{B}}
\iint\partial_{\bar w}\tilde h(w)\cdot \partial_{\bar v}\widetilde{ v {{\ell^\theta}}'}(v)\
\int_{-R}^{t \wedge r}
\E_{j,\bm{t}}{\rm Tr}\left(
\frac{v+U_{0}}{v-U_{0}}\frac{w_{r-s} U_{t-s}}{(w_{r-s}-U_{t-s})^2}
\right)
\rd s\frac{\rd m(v)}{v}
  \frac{\rd m(w)}{w}+\OO(N^{-\delta+\varepsilon}).
\]

\noindent {\it Third step: injecting resolvent estimates}. 
With the key Lemma \ref{lem:loclaw-rig} in the above expectation,  noting that $\frac{w_{t-s}}{w}w_{r-s}=w_{r+t-2s}$ we obtain
\begin{align*}
&\E_{\bf{f}}( \Tr h(U_r) )- N \dashint h= \sum_{t+\ii\theta\in\mathscr{A}} A_r(t+\ii\theta)+\OO(N^\e\sum_{t+\ii\theta\in\mathscr{B}} \mathcal{E}_r(t+\ii\theta))+\OO(N^{-\delta+\e}),\\
&A_r(t+\ii\theta):=-\frac{1}{N\pi^2}\iint\partial_{\bar w}\tilde h(w)
\int_{-R}^{t\wedge r}
\E_{j,\bm{t}}{\rm Tr}\left(
\frac{v+U_{0}}{v-U_{0}}\frac{w_{r+t-2s} U_{0}}{(w_{r+t-2s}-U_{0})^2}
\right)
\rd s
\partial_{\bar v}\widetilde{ v {{\ell^\theta}}'}(v)\frac{\rd m(v)}{v}\frac{\rd m(w)}{w},\\
&\mathcal{E}_r(t+\ii\theta):=\frac{1}{N}\iint|\partial_{\bar w}\tilde h(w)|
\int_{-R}^{t\wedge r}
\frac{(1+|w_{r+t-2s}|)}{ \eta_{w_{r-s}}\,d(v,w_{r+t-2s})\,\min(1,N\eta_v)}\\
&\hspace{5cm}\times(\frac{1}{\eta_{w_{r-s}}}+\frac{1}{\sqrt{\eta_{w_{r-s}}\min(\eta_{w_{r+t-2s}},\eta_v)}})
\rd s
|\partial_{\bar v}\widetilde{ v {{\ell^\theta}}'}(v)|\frac{\rd m(v)}{|v|}\frac{\rd m(w)}{|w|}.
\end{align*}
We start with the evaluation of the main terms,  $A_r(t+\ii\theta)$.
Importantly the biased measures $\E_{j,\bm{t}}$ don't depend on $s$, so that 
for any $D>0$,  uniformly in $w$ we can integrate
\[
\int_{-R}^{t\wedge r}
\frac{w_{r+t-2s} U_{0}}{(w_{r+t-2s}-U_{0})^2}
\rd s=\int_{-\infty}^{t\wedge r}
\frac{w_{r+t-2s} U_{0}}{(w_{r+t-2s}-U_{0})^2}
\rd s+\OO(N^{-D})
=
-\frac{1}{2}\frac{U_0}{w_{|r-t|}-U_0}\cdot(\mathds{1}_{|w|>1}-\mathds{1}_{|w|<1})
+\OO(N^{-D}).
\]
Noting that $\frac{U}{w-U}=\tfrac{1}{2}\frac{w+U}{w-U}-\tfrac{1}{2}$ and $\int_{\mathbb{C}}\partial_{\bar w}\tilde h(w)
(\mathds{1}_{|w|>1}-\mathds{1}_{|w|<1})
\frac{\rd m(w)}{w}=0$ (this follows from (\ref{eqn:upper}) and (\ref{eqn:lowerbis}) when $z=0$ or $z\to+\infty$),  we have obtained
\[
A_r(t+\ii\theta)=\frac{1}{4N\pi^2}
\E_{j,\bm{t}}{\rm Tr}\left[\left(
\int_{\mathbb{C}}\partial_{\bar w}\tilde h(w)
\frac{w_{|r-t|}+U_0}{w_{|r-t|}-U_0}\cdot(\mathds{1}_{|w|>1}-\mathds{1}_{|w|<1})
\frac{\rd m(w)}{w}\right)\left(
\int_{\mathbb{C}}
\partial_{\bar v}\widetilde{ v {{\ell^\theta}}'}(v)\frac{v+U_{0}}{v-U_{0}}\frac{\rd m(v)}{v}\right)
\right]+\OO(N^{-D}).
\]
The first parenthesis is equal to ${\rm H}\,{\rm P}_{|r-t|} h(U_0)$ from (\ref{eqn:HilbertPoisson}),  and the second one is
simply $U_0{{\ell^\theta}}'(U_0)$ from (\ref{eqn:HS-rep}). This gives
\[
\sum_{r\in I,t+\ii\theta\in\mathscr{A}} A_r(t+\ii\theta)=
\frac{1}{N}\E_{j,\bm{t}}
\sum_{t+\ii\theta\in\mathscr{A}}{\rm Tr}\left(\sum_{r\in I}{\rm H}\,{\rm P}_{|r-t|} h(U_0)
U_0 {{\ell^\theta}}'(U_0)\right)+\OO(N^{-D}).
\]
We now return to the original regularized logarithm functions $\ell^{\theta}$'s and the functions $h_r$'s by summing over all components of the decompositions. Because the logarithmic singularity $1/x$ of ${\ell^\theta}'$ (up to scale $N^{-1-\alpha}$) is compensated by the vanishing assumption \eqref{eqn:assumed}, we claim that
\begin{multline} \label{eq:hlbrt_sing}
\frac{1}{N}\E_{j,\bm{t}}
\sum_{t+\ii\theta\in\mathscr{A}}{\rm Tr}\left(\sum_{r\in I}{\rm H}\,{\rm P}_{|r-t|} h_r(U_0)
U_0 {{\ell^\theta}}'(U_0)\right)=\sum_{t+\ii\theta\in\mathscr{A}}\frac{1}{2\pi}\int_{0}^{2\pi}\sum_{r\in I} {\rm H}\,{\rm P}_{|r-t|} h_r(e^{\ii\omega}) e^{\ii\omega} {{\ell^\theta}}'(e^{\ii\omega})\rd \omega +\OO(N^{-\delta+\varepsilon}) 
\\
=\sum_{r\in I,t+\ii\theta\in\mathscr{A},k\in\mathbb{Z}}  e^{- |k| |t-r|}  |k| {\widehat{\ell^\theta}}_{k} \hat{(h_r)}_{-k} +\OO(N^{-\delta+\varepsilon}) 
\end{multline}
where the second equality follows by equations \eqref{eqn:PoissonK} and \eqref{eqn:Hilbert}.
For the first equality,  
denoting $F(\omega)=\sum_r{\rm H}\,{\rm P}_{|r-t|} h_r(e^{\ii\omega})e^{\ii\omega} {{\ell^\theta}}'(e^{\ii\omega})$
by rigidiy of the eigenangles we first have $|N^{-1}\sum F(\theta_k)-\int F|\lesssim N^{-1+\e}\int|F'|$ with overwhelming probability, so we just need to bound $\int|F'|$. Writing $G(\omega)=\sum_r{\rm H}\,{\rm P}_{|r-t|} h_r(e^{\ii\omega})e^{\ii\omega}$ and using the key assumption (\ref{eqn:assumed}), we have (assuming $\theta=0$ without loss of generality)
\[
\int|F'|\lesssim \int |(G(\omega)-G(0))\ell^{''}(\omega)+G'(\omega)\ell^{'}(\omega)|\rd\omega\lesssim \int |(G(\omega)-G(0)-\omega G'(\omega))\ell^{''}(\omega)|\rd\omega+\int |G'(\omega)(\ell^{'}(\omega)+\omega\ell''(\omega))|\rd\omega
\]
Using Taylor with integral remainder and  the bounds $|\ell''|\lesssim |\omega|^{-2}$,  $|\ell^{'}(\omega)+\omega\ell''(\omega)|\lesssim (|\omega|+N^{-1-\alpha})^{-1}$,  we obtain
$
\int|F'|\lesssim \int|G''|+|G'(0)|\log N.
$
Note that $\left|\left({\rm H}\,{\rm P}_{|r-t|} h_r\right)'\right|\leq \sum_{k\in\Z}|k\hat{(h_r)}_k|\leq (\log N)^2N^{1-\delta}$ as $h_r\in\mathscr{S}_{\delta,C}$ so $G'(0)\leq (\log N)^2N^{1-\delta}$. 
Moreover,  from  the LHS representation of the Hilbert transform in (\ref{eqn:Hilbert}) we easily obtain $\int|{\rm H}f|\lesssim (\log N)\int|f|+N^{-10}\sup|f'|$ for any function $f$,  so 
 $\int |G''|\lesssim (\log N)\sum_r\int |{\rm P}_{r-t}h_r''|\lesssim (\log N)\sum_r\int |h_r''|\lesssim (\log N)^2 N^{1-\delta}$.
This concludes the proof of  \eqref{eq:hlbrt_sing}.

We now estimate the error term $\mathcal{E}_r(t+\ii\theta)$,
\begin{align*} 
&\mathcal{E}_r(t+\ii\theta)\leq \frac{1}{N}\iint|\partial_{\bar w}\tilde h(w)| \frac{1}{\eta_{w}}(1+\frac{1}{N\eta_{v}}) (\frac{1}{\eta_{w}}+\frac{1}{\sqrt{\eta_{w}\eta_v}})|\partial_{\bar v}\widetilde{ v {{\ell^\theta}}'}(v)|\int_{-R}^{t\wedge r}\frac{(1+|w_{r+t-2s}|)}{ d(v,w_{r+t-2s})}\rd s \rd m(v)\rd m(w).
\end{align*}
Note that $\int_{-R}^{t\wedge r}\frac{(1+|w_{r+t-2s}|)}{ d(v,w_{r+t-2s})}\rd s\leq \varphi \,|\log d(v,w)|$. The contribution from $d(v,w)\leq N^{-10}$ is negligible (e.g. it is $\OO(N^{-3})$) by volume estimate.  Hence,
\[
\mathcal{E}_r(t+\ii\theta)\leq \frac{\varphi^2}{N}\iint|\partial_{\bar w}\tilde h(w)| \frac{1}{\eta_{w}}(1+\frac{1}{N\eta_{v}}) (\frac{1}{\eta_{w}}+\frac{1}{\sqrt{\eta_{w}\eta_v}})|\partial_{\bar v}\widetilde{ v {{\ell^\theta}}'}(v)| \rd m(v)\rd m(w)+\OO(N^{-3}).
\]
From (\ref{eqn:HS3}),
denoting $N^{-1+\xi}$ by $\epsilon$, as $\chi$ is supported on $\exp([-2\varepsilon,2\varepsilon])$, constant equal to $1$ on $\exp([-\varepsilon,\varepsilon])$, we obtain (for some points $a,b$ on the unit circle) that $|\partial_{\bar w}\tilde h|\leq \eta_w^2\varepsilon^{-3}\mathds{1}_{|w-a|<4\varepsilon}$, and similarly $|\partial_{\bar v}\widetilde{ v{\ell^\theta}'}|\leq \eta_v{\tilde\varepsilon}^{-3}\mathds{1}_{|v-b|<4\tilde\varepsilon}$ (remember $\tilde h$ is defined from the second order expansion (\ref{eqn:2nd_order_quasi}) and $\widetilde{ v{\ell^\theta}'}$ from the first order (\ref{eqn:HS-exp})).  Substituting these, we obtain
\begin{align*} 
\int \frac{|\partial_{\bar w}\tilde h(w)|}{\eta_{w}^c}\rd m(w)\leq \frac{1}{\epsilon^{c-1}},\ \textnormal{for }c<3; \quad \int \frac{|\partial_{\bar v}\widetilde{ v {{\ell^\theta}}'}(v)|}{\eta_{v}^{c}}\rd m(v)\leq \frac{1}{\tilde\varepsilon^{c}},\ \textnormal{for }c<2.
\end{align*}
Therefore,
\begin{align*} 
\mathcal{E}_r(t+\ii\theta)\leq \frac{\varphi^2}{N}\left(\frac{1}{\epsilon^{1/2}\tilde{\epsilon}^{1/2}}+\frac{1}{\epsilon}\right)+\frac{\varphi^2}{N^2}\left(\frac{1}{\epsilon^{1/2}\tilde{\epsilon}^{3/2}}+\frac{1}{\epsilon\tilde{\epsilon}}\right)\leq N^{-\delta/4}
\end{align*}
where we used $\alpha=\delta/6$ in the last inequality.
Combining the estimates for  $A_r$ and $\mc{E}_r$ concludes the proof.\end{proof}

\section{Proof of the theorems}

\label{sec:proofmain}

\subsection{Theorem \ref{thm:FH}}. \label{subsec:proofmain}\ 
The local decoupling (Section \ref{sec:DPP}) and the asymptotics of the loop equations (Section \ref{sec:loop}) allow to prove Theorem \ref{thm:FH} through the following surgery.  The Selberg  formula is a base point.

\begin{lemma}[One singularity]
\label{lem:one-sing}
 For any $\theta \in [0,2\pi]$, as $N \to \infty$,
$$
\E( | \det (U_0 - e^{\ii \theta})|^{\gamma})  = N^{ \frac{\gamma^2}{4}} \frac{{\rm G}(1+\frac{\gamma}{2})^2}{{\rm G}(1+\gamma)} (1 + \OO(1/N)).
$$
\end{lemma}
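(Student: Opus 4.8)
\textbf{Proof plan for Lemma \ref{lem:one-sing}.}
The plan is to reduce the expectation to a known closed-form integral and then extract the asymptotics. First I would use the Heine identity (or, equivalently, the Weyl integration formula on $\UU(N)$): since $U_0$ is Haar-distributed on $\UU(N)$,
$$
\E\bigl(|\det(U_0-e^{\ii\theta})|^\gamma\bigr)=\E\bigl(|\det(U_0-1)|^\gamma\bigr)
$$
by rotational invariance, so without loss of generality $\theta=0$. Then the quantity equals a Toeplitz determinant with the pure Fisher--Hartwig symbol $f(z)=|z-1|^\gamma$, i.e. $D_N(f)$ with $V\equiv 0$ and a single singularity of exponent $\alpha=\gamma/2$, and this is exactly the setting of the static formula \eqref{eqn:Widom}. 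One is of course not allowed to invoke the general Fisher--Hartwig theorem as a black box in a paper whose point is to reprove such results, but the \emph{one-singularity, zero-potential} case is precisely the case that is evaluated \emph{exactly} by the Selberg integral, with no asymptotic analysis needed beyond Stirling-type estimates on Gamma/Barnes functions.

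Concretely, I would write the Weyl integration formula
$$
\E\bigl(|\det(U_0-1)|^\gamma\bigr)=\frac{1}{N!\,(2\pi)^N}\int_{[0,2\pi]^N}\prod_{j=1}^N|e^{\ii\theta_j}-1|^\gamma\prod_{1\le j<k\le N}|e^{\ii\theta_j}-e^{\ii\theta_k}|^2\,\rd\theta_1\cdots\rd\theta_N,
$$
and recognize the right-hand side as a Morris/Selberg integral: with the standard substitution this equals
$$
\prod_{j=0}^{N-1}\frac{\Gamma(1+j)\,\Gamma(1+\gamma+j)}{\Gamma\bigl(1+\tfrac{\gamma}{2}+j\bigr)^2}
=\frac{\G(1+N)\,\G(1+N+\gamma)\,\G(1+\tfrac{\gamma}{2})^2}{\G(1+N+\tfrac{\gamma}{2})^2\,\G(1+\gamma)},
$$
where in the last step I used the functional equation $\G(z+1)=\Gamma(z)\G(z)$ to telescope the products of Gamma functions into ratios of Barnes $\G$-functions. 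This identity is exact for every $N$ and every $\gamma\ge 0$ (for $\gamma$ in the convergence range; here $\gamma\in[0,C]$ is fine since the integral converges for all $\gamma>-1$).

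It then remains to find the large-$N$ behaviour of $\dfrac{\G(1+N)\,\G(1+N+\gamma)}{\G(1+N+\gamma/2)^2}$. For this I would use the classical asymptotic expansion of the Barnes function,
$$
\log\G(1+z)=\frac{z^2}{2}\log z-\frac{3z^2}{4}+\frac{z}{2}\log(2\pi)-\frac{1}{12}\log z+\zeta'(-1)+\OO(1/z),
$$
applied at $z=N$, $z=N+\gamma$ and $z=N+\gamma/2$. Expanding $\log(N+a)=\log N+a/N-a^2/(2N^2)+\cdots$ and collecting terms, the $\log N$ coefficients combine (using $a^2$ with $a\in\{0,\gamma,\gamma/2\}$ and the combination $1\cdot 0+1\cdot\gamma^2-2\cdot(\gamma/2)^2=\gamma^2/2$, halved by the $z^2/2$ prefactor) to produce exactly $\tfrac{\gamma^2}{4}\log N$, while the $-\tfrac34 z^2$ terms, the $\tfrac{z}{2}\log(2\pi)$ terms, the $-\tfrac1{12}\log z$ terms and the constants $\zeta'(-1)$ all cancel in the alternating combination $(+1,+1,-2)$; the surviving lower-order contributions are $\OO(1/N)$. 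Hence
$$
\frac{\G(1+N)\,\G(1+N+\gamma)}{\G(1+N+\gamma/2)^2}=N^{\gamma^2/4}\bigl(1+\OO(1/N)\bigr),
$$
and plugging this back gives
$$
\E\bigl(|\det(U_0-e^{\ii\theta})|^\gamma\bigr)=N^{\gamma^2/4}\,\frac{\G(1+\tfrac{\gamma}{2})^2}{\G(1+\gamma)}\bigl(1+\OO(1/N)\bigr),
$$
as claimed, with the implied constant locally uniform in $\gamma\in[0,C]$. The only genuinely delicate point is the bookkeeping in this last cancellation; everything else (Weyl integration, Selberg/Morris evaluation, telescoping via the $\G$ functional equation) is routine, and the uniformity in $\gamma$ on compacts follows from the uniformity of the Barnes asymptotic expansion on compact $\gamma$-ranges.
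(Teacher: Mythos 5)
Your proof is correct and is essentially the same as the paper's: the paper also reduces to the exact Selberg/Weyl product $\prod_{j=1}^N \Gamma(j)\Gamma(j+\gamma)/\Gamma(j+\gamma/2)^2$ (citing Keating--Snaith rather than rederiving it from the Weyl integration formula), telescopes via $\G(z+1)=\Gamma(z)\G(z)$ into a ratio of Barnes $\G$-functions, and extracts the $N^{\gamma^2/4}$ asymptotics from the Barnes expansion. A small caveat in your bookkeeping: the $-\tfrac34 z^2$ terms do not vanish on their own in the $(+1,+1,-2)$ combination (they contribute $-\tfrac{3\gamma^2}{8}$), but this is cancelled by the constant contribution from expanding $\tfrac{z^2}{2}\log z$, so the end result $\tfrac{\gamma^2}{4}\log N + \OO(1/N)$ is correct as you claim.
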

\begin{proof} We use the exact expression of the expected value of powers of the characteristic polynomials derived by Keating and Snaith \cite[(6)]{KeaSna2000} (and based on Weyl's and  Selberg's formulas) to calculate
\begin{align*}
\lim_{N \to \infty} N^{-\frac{\gamma^2}{4}} \E | \det (U_0 - e^{\ii \theta} )|^{\gamma}  & = \lim_{N \to \infty} N^{-\frac{\gamma^2}{4}} \prod_{j=1}^N \frac{\Gamma(j) \Gamma(j+\gamma)}{\Gamma(j+\frac{\gamma}{2})^2} \\
& = \lim_{N \to \infty} N^{-\frac{\gamma^2}{4}} \frac{{\rm G}(N+1) {\rm G}(N+1+\gamma)}{{\rm G}(N+1+\frac{\gamma}{2})^2} \frac{{\rm G}(1+\frac{\gamma}{2})^2}{{\rm G}(1) {\rm G}(1+\gamma)} =   \frac{{\rm G}(1+\frac{\gamma}{2})^2}{{\rm G}(1+\gamma)}.
\end{align*}
The second equality followed from the relation ${\rm G}(z+1) = \Gamma(z) {\rm G}(z)$ and the last one from ${\rm G}(1) = 1$ and the following asymptotics  (see, e.g., Barnes' original paper on the G function \cite[page 269]{Bar1900}):
$$
\log {\rm G}(z+1) = \frac{z^2}{2} \log z - \frac{3z^2}{4} + \frac{z}{2} \log 2\pi - \frac{1}{12} \log z + C + \OO_{z \to \infty}( \frac{1}{z})
$$
Indeed, it gives $\log {\rm G}(N+\gamma+1)  = \log {\rm G}(N+1) + \gamma N \log N - \gamma N  + \frac{\gamma^2}{2} \log N  +  \OO(\frac{1}{N})$ hence only the quadratic term in $\gamma$ contributes to $\log \frac{{\rm G}(N+1) {\rm G}(N+1+\gamma)}{{\rm G}(N+1+\frac{\gamma}{2})^2}  = \frac{\gamma^2}{4} \log N  + \OO( \frac{1}{N} )$.
\end{proof}
In what follows, we use the notations $f_t$ to denote the pair $(f,t)$ where $f$ is a function and $t$ a real number and we set for any $s,t$,
\begin{equation}
\label{eq:def-opera}
\mscr{C}(f_s,g_t) := \lim_{N \to \infty} \Cov ( \Tr f(U_s), \Tr g(U_t)) = \sum_{k \in \mb{Z}} |k| \hat{f}_k \hat{g}_{-k} e^{-|k| |t-s|} = (f, P_{|t-s|} g)_{ \rm H}.
\end{equation}
We extend it to finite linear combination, $\mscr{C}(f_s,\lambda g_t+ h_r) = \lambda \mscr{C}(f_s, g_t) +  \mscr{C}(f_s, h_r)$ and set $\mscr{C}(f_s) := \mscr{C}(f_s,f_s)$, which does not depend on $s$. With $L^x := \gamma_x \log |e^{\ii x} - \cdot |$ and $t>0$, we record  the following identities, obtained by using  $\widehat{(f_x)}_k = - \frac{e^{- \ii k x}}{2|k|}$ where $f_x(\theta) = \log |e^{\ii x} - e^{\ii \theta}|$,
\begin{align}
\mscr{C}(f_0, L_t^x) & = \gamma_x \sum_{k \neq 0} |k| \hat{f}_k \cdot ( - \frac{e^{\ii k x}}{2|k|} 1_{k \neq 0} ) e^{- |k| |t|}  = - \frac{\gamma_x}{2} ( {\rm P}_{t}f(e^{\ii x}) - \dashint f )   = - \frac{\gamma_x}{2} ({\rm P}_{t}-{\rm P}_{\infty}) f (e^{\ii x}), \label{eq:cov-id-1}\\
\mscr{C}(L_0^x, L_t^y) & = \gamma_x \gamma_y \sum_{k \neq 0} |k| \frac{e^{\ii k x}}{2|k|} \frac{e^{-\ii k y}}{2|k|} e^{- |k| t}    =   \frac{\gamma_x \gamma_y}{2} \sum_{k \geq 1} \frac{\cos(k(x-y))}{k} e^{- k t}  = \gamma_x \gamma_y {\rm P}_t C(x-y), \label{eq:cov-id-2}
\end{align}
where the function $C$ is defined in (\ref{def:cov-C}).

\begin{lemma}[One singularity $\&$ one smooth function] Let $t\geq 0$ and $f \in \mscr{S}_{\delta,C}$ for $\delta\in (0,1)$, then  
$$
\E( | \det (U_t - e^{\ii x}) |^{\gamma_x} e^{\Tr f(U_0)})  = N^{ \frac{\gamma_x^2}{4}} \frac{{\rm G}(1+\frac{\gamma_x}{2})^2}{{\rm G}(1+\gamma_x)} e^{ \mscr{C} ( f_0, L_t^{x})} e^{N \dashint f_0 + \frac{1}{2} \mscr{C}(f_0)} (1+\OO(N^{-\delta/9}) ).
$$
\end{lemma}
\begin{proof}
Without loss of generality, we suppose $\dashint f =0$. We start with replacing the logarithmic singularity with $\ell_+^{e^{\ii x}}$ (recall Definition \ref{def:SmoothLog}, here we set the submicroscopic smoothing parameter $\alpha=\delta/15$) and for simplicity we write $L_+^x=\gamma_x\ell_+^{e^{\ii x}}$. Define a function $q(e^{i\omega})=\chi_r(\omega)-\chi_l(\omega)$ where $\chi_r$ (resp. $\chi_l$) is a smooth bump function that is equal to $1$ on $[x+N^{-1+9\delta/20},x+N^{-1+\delta/2}]$ (resp. $[x-N^{-1+\delta/2},x-N^{-1+9\delta/20}]$) and supported in $[x+N^{-1+9\delta/20}/2,x+2N^{-1+\delta/2}]$ (resp.  $[x-2N^{-1+\delta/2},x-N^{-1+9\delta/20}/2]$). By the principal value definition of Hilbert transform from \eqref{eqn:Hilbert}, it's easy to see that ${\rm H} q (e^{ix})\asymp\log N$. On the other hand using the Fourier space definition of Hilbert transform from \eqref{eqn:Hilbert},  ${\rm H P}_tf=\OO (\log N)$. Hence, there exist an $\OO(1)$ constant $\alpha$ such that ${\rm HP}_tf(e^{\ii x})-\alpha{\rm H}q(e^{\ii x})=0$. We choose 
\begin{equation}\label{eqn:comp}
p=\alpha q, 
\end{equation}
calling the function $p$ compensator.  

Now we apply Lemma \ref{lem:loopEqn}, adding the local compensator $p$ at the singularity in order to satisfy the Hilbert transform condition \eqref{eqn:assumed},
\begin{multline*}
\E( e^{\Tr L_+^x(U_t)+\Tr f(U_0)})  = \E( e^{\Tr L_+^x(U_t)+\Tr p(U_t)}) \exp \left( \int_0^1 \frac{\rd}{\rd\nu} \log \E( e^{ \Tr L_+^x(U_t)+ \nu \Tr f(U_0)+(1-\nu)\Tr p(U_t)})\rd\nu \right) \\
 =  \E( e^{\Tr L_+^x(U_t)+\Tr p(U_t)}) \exp \left( \int_0^1 \mscr{C} (   f_0-p_t, L_{+,t}^x + \nu f_0+(1-\nu)p_t) \rd\nu \right) (1+\OO( N^{-\delta/9}) )
\end{multline*}
where by integrating we have $\int_0^1 \mscr{C} (   f_0-p_t, L_{+,t}^x + \nu f_0+(1-\nu)p_t) \rd\nu = \mscr{C} (  f_0, L_{+,t}^x) + \frac{1}{2} \mscr{C}(f_0,f_0)-\mscr{C}(p_t,L_{+,t}^x)-\frac{1}{2}\mscr{C}(p_t,p_t)$. Notice that the expectation is now just a single-time expression and once we prove the following:
\begin{align} \label{eqn:single_loop_step}
\log\E( e^{\Tr L_+^x+\Tr p(U_t)})=\log\E( e^{\Tr L_+^x(U_t)})+\mscr{C}(p_t,L^x_{+,t})+\frac{1}{2}\mscr{C}(p_t,p_t)+\OO(N^{-\delta/9})
\end{align}
with the approximation $\mscr{C}(f_0, L_{+,t}^x)=\mscr{C}(f_0, L_{t}^x)+\OO(N^{-\delta/9})$, which can be shown easily by the Fourier coefficient expression in \eqref{eq:def-opera}, we obtain
\begin{align*} 
\E( | \det (U_t - e^{\ii x}) |^{\gamma_x} e^{\Tr f(U_0)})  \leq N^{ \frac{\gamma_x^2}{4}} \frac{{\rm G}(1+\frac{\gamma_x}{2})^2}{{\rm G}(1+\gamma_x)} e^{ \mscr{C} ( f_0, L_t^{x})} e^{N \dashint f_0 + \frac{1}{2} \mscr{C}(f_0)} (1+\OO(N^{-\delta/9}) )
\end{align*}
by Lemma \ref{lem:single_smoothing}. Repeating the same steps with lower regularization $\ell_-$ finishes the proof. The proof of equation \eqref{eqn:single_loop_step} is just a straightforward application of the single-time loop equation,  it is given in the Appendix.
\end{proof}

Let $\lambda=N^{-1+\kappa}$ be the mesoscopic scale for the small fixed positive constant $\kappa\ll \delta$ where $\delta$ is the separation parameter for the singularities of $\mscr{A}$.
We denote the mesoscopic regularization of logarithm around $e^{\ii x}$ by $\ell_\lambda^{e^{\ii x}}$ so that $\log f_j+\gamma_x\log (-2\lambda)=\ell_+^{e^{\ii x}}-\ell_{\lambda}^{e^{\ii x}}$ where $f_j$ is defined as in equation \eqref{eq:truncated-singularities} for $z=e^{\ii x}$. Now, we introduce the notation $L^x_+ =\gamma_x\ell_+^{e^{ix}}= L^{x,{\rm loc}}_+ + L^{x,{\rm reg}}_+$ where $L^{x,{\rm loc}}_+$ and $L^{x,{\rm reg}}_+$ stands for local submicroscopically regularized logarithm $\gamma_x(\ell_+^{e^{ix}}-\ell_{\lambda}^{e^{ix}})$ and mesoscopic log -regularization $\gamma_x \ell_{\lambda}^{e^{ix}}$ respectively. An application of the lemma above (with Lemmas \ref{lem:smoothing} and \ref{lem:single_smoothing}) gives
\begin{equation}
\label{eq:asymp-loc-sing}
\E (e^{ \Tr L^{x,\rm{loc}}_+(U_t)})  = N^{\frac{\gamma_x^2}{4}} \frac{{\rm G}(1+\frac{\gamma_x}{2})^2}{{\rm G}(1+\gamma_x)} e^{-\frac{1}{2}(\mscr{C}(L_{+,t}^{x}) - \mscr{C}(L_{+,t}^{x,\rm{loc}}) )}  (1+\OO(N^{-\kappa/9}))
\end{equation}
since  $-\mscr{C}( L_{+,t}^{x,{\rm reg}}, L_{+,t}^{x})+ \frac{1}{2} \mscr{C} (L_{+,t}^{x,{\rm reg}} ) = -\frac{1}{2}(\mscr{C}(L_{+,t}^{x}) - \mscr{C}(L_{+,t}^{x,{\rm loc}}) )$. We are now ready to prove our main theorem. 


\begin{proof}[Proof of Theorem \ref{thm:FH}]  Throughout the proof, $\eps$ that is used for a small positive constant, may change line by line. Again, we start with $\ell_+$ regularization of $\log$, i.e., we consider the expression
\begin{align*} 
\E\Big[e^{\sum_{s \in \mathscr{B}} \Tr f_s(U_s)+\sum_{z = t + \ii \theta\in\mathscr{A}} \Tr \ell_{+}^{e^{\ii\theta}}(U_t)}\Big].
\end{align*} 
We choose the submicroscopic regularization parameter $\alpha$ (used in the definition of $\ell_+$) and the mesoscopic scale parameter $\kappa$ (in the definition of $\ell_\lambda$) so that $\alpha, \kappa \ll \delta$ (where $\ll$ depends on the value of $C$), where $\delta$ is the separation parameter for the singularities at $\mathscr{A}$ appearing in the statement of the theorem. For convenience, we enumerate the singularities as $\ell_+^{e^{\ii x_j}}(U_{t_j})$ and write $|\mathscr{A}| = J$. We denote by $\mc{L} = \mc{L}_{\rm{loc}}+\mc{L}_{\rm{reg}}$ the decomposition of the regularized log-singularity sum into submicroscopic localized terms and mesoscopic smoothing parts (see above equation \eqref{eq:asymp-loc-sing}), and let $\mc{S}$ denote the remaining smooth contributions. To satisfy the Hilbert transform vanishing assumption \eqref{eqn:assumed} in the loop equation, we introduce local compensator functions $p_j = \alpha_j q_j$ (see above \ref{eqn:comp} for the definition of $q$) around each singularity $\ell_+^{x_j}$, and denote their sum by $\mc{P}$, where $\alpha_j$ is to be determined. The Hilbert transform condition \eqref{eqn:assumed} then takes the form:

\begin{multline*} 
\begin{bmatrix}
{\rm HP}_{|t_1-t_1|}q_1(e^{\ii x_1}) & {\rm HP}_{|t_1-t_2|}q_2(e^{\ii x_1}) & \cdots & {\rm HP}_{|t_1-t_J|}q_J(e^{\ii x_1})
\\
{\rm HP}_{|t_2-t_1|}q_1(e^{\ii x_2}) & {\rm HP}_{|t_2-t_2|}q_2(e^{\ii x_2}) & \cdots & {\rm HP}_{|t_2-t_J|}q_J(e^{\ii x_2})
\\
\vdots & \vdots & \ddots & \vdots
\\
{\rm HP}_{|t_J-t_1|}q_1(e^{\ii x_J}) & {\rm HP}_{|t_J-t_2|}q_2(e^{\ii x_J}) & \cdots & {\rm HP}_{|t_J-t_J|}q_J(e^{\ii x_J})
\end{bmatrix} 
\begin{bmatrix}
\alpha_1 \\ \alpha_2 \\ \vdots \\ \alpha_{J}
\end{bmatrix}
\\
=
\begin{bmatrix}
\sum_{z = t + \ii \theta\in\mathscr{A}} {\rm HP}_{|t_1-t|}\ell_{\lambda}^{e^{\ii \theta}}(e^{\ii x_1})+\sum_{s \in \mathscr{B}} {\rm HP}_{|t_1-s|} f_s(e^{\ii x_1})
\\
\vdots
\\
\sum_{z = t + \ii \theta\in\mathscr{A}} {\rm HP}_{|t_J-t|}\ell_{\lambda}^{e^{\ii \theta}}(e^{\ii x_J})+\sum_{s \in \mathscr{B}} {\rm HP}_{|t_J-s|} f_s(e^{\ii x_J}).
\end{bmatrix}
\end{multline*}
The square matrix on the left-hand side has  diagonal entries $\asymp\log N$ and off-diagonal entries $\OO(N^{-\delta+\epsilon})$. On the other hand, every entry of the right-hand side vector is $\OO(\log N)$. Hence, there exist $\OO(1)$ constants $\alpha_j$'s satisfying this system of linear equations.

 We suppose without loss of generality that the smooth functions are centered. Our starting point is the identity
$$
\E (e^{ \mc{S} + \mc{L}}) = \E(e^{\mc{L}_{\rm{loc}}+\mc{P}}) \exp \left( \int_0^1 \frac{\rd}{\rd\nu} \log  \E (e^{\nu (\mc{S}+\mc{L}_{\rm{reg}}-\mc{P}) + \mc{L}_{\rm{loc}}+\mc{P}} ) \rd\nu \right).
$$
Then, by Proposition \ref{Prop:decoupling} we have
\begin{align}
\E(e^{\mc{L}_{\rm{loc}}+\mc{P}}) & =  \prod_j \E (e^{ \Tr L^{x_j,\rm{loc}}_+(U_{t_j})+\Tr p_j(U_{t_j}) } ) \cdot (1 + \OO(N^{-\delta/3}))  \nonumber\\
&=\prod_j \E (e^{ \Tr L^{x_j,\rm{loc}}_+(U_{t_j})} ) e^{\mscr{C}(p_{j,t_j},L^{x_j,\rm{loc}}_{+,t_j})+\frac{1}{2}\mscr{C}(p_{j,t_j})}\cdot (1 + \OO(N^{-\eps})) \label{eqn:single_loop_second_app}
\\
&= \prod_j N^{\frac{\gamma_{x_j}^2}{4}} \frac{{\rm G}(1+\frac{\gamma_{x_j}}{2})^2}{{\rm G}(1+\gamma_{x_j})}  e^{-\frac{1}{2}(\mscr{C}(L_{+,t_j}^{x_j})-\mscr{C}(L_{+,t}^{x_j,{\rm{loc}}}) )+\mscr{C}(p_{j,t_j},L^{x_j,\rm{loc}}_{+,t_j})+\frac{1}{2}\mscr{C}(p_{j,t_j})}  (1 + \OO(N^{-\eps}))\nonumber
\end{align}
where $\epsilon$ is a small positive constant depending only on $C$ and $\delta$; the second equality follows from single-time loop equation similarly to \eqref{eqn:single_loop_step} and the third equality follows from \eqref{eq:asymp-loc-sing}. Furthermore, by Lemma \ref{lem:loopEqn},
\begin{align*}
\int_0^1 \frac{\rd}{\rd\nu} &\log  \E (e^{\nu (\mc{S}+\mc{L}_{\rm{reg}}-\mc{P}) + \mc{L}_{\rm{loc}}+\mc{P}} ) \rd\nu   = \int_0^1 \mscr{C}(\mc{S}+\mc{L}_{\rm{reg}}-\mc{P},\nu (\mc{S}+\mc{L}_{\rm{reg}}-\mc{P})+\mc{L}_{\rm{loc}}+\mc{P})\rm{d}\nu + \OO( N^{-\eps}) ) 
\\
&= \mscr{C}(\mc{S}+\mc{L}_{\rm{reg}}, \mc{L}_{\rm{loc}}) + \frac{1}{2} \mscr{C}(\mc{S}+\mc{L}_{\rm{reg}})-\frac{1}{2}\mscr{C}(\mc{P})+\frac{1}{2}\mscr{C}(\mc{P},\mc{S})-\mscr{C}(\mc{P},\mc{L}_{{\rm loc}})+\frac{1}{2}\mscr{C}(\mc{P},\mc{L}_{{\rm reg}})  + \OO( N^{-\eps}) 
\\
&= \frac{1}{2}\mscr{C}(\mc{S}) + \mscr{C}(\mc{S},\mc{L}) + \frac{1}{2} \mscr{C}(\mc{L}) -  \frac{1}{2} \mscr{C}(\mc{L}_{\rm{loc}})-\frac{1}{2}\mscr{C}(\mc{P})+\frac{1}{2}\mscr{C}(\mc{P},\mc{S})-\mscr{C}(\mc{P},\mc{L}_{{\rm loc}})+\frac{1}{2}\mscr{C}(\mc{P},\mc{L}_{{\rm reg}})+ \OO( N^{-\eps}) .
\end{align*}
Altogether, we obtain
\begin{multline*}
\E (e^{ \mc{S} + \mc{L}})   =e^{\frac{1}{2} \mscr{C}(\mc{S})} e^{\mscr{C}(\mc{S}, \mc{L})}  e^{\frac{1}{2}\mscr{C}(\mc{L})-\frac{1}{2}\sum_{\mscr{A}}\mscr{C}(L_{+,t}^{x})} \prod_{\mathscr{A}} N^{ \frac{\gamma_z^2}{4}} \frac{{\rm G}(1+\frac{\gamma_z}{2})^2}{{\rm G}(1+\gamma_z)} e^{-\frac{1}{2} \mscr{C}(\mc{L}_{\rm{loc}})+\frac{1}{2}\sum_{\mscr{A}}\mscr{C}(L_{+,t}^{x,{\rm loc}})}
\\
e^{-\mscr{C}(\mc{P},\mc{L}_{{\rm loc}})+\sum_{1}^{|\mscr{A}|}\mscr{C}(p_j,L_{+,t_j}^{x_j,{\rm loc}})}e^{-\frac{1}{2}\mscr{C}(\mc{P})+\frac{1}{2}\sum_{1}^{|\mscr{A}|}\mscr{C}(p_j)}(1 + \OO(N^{-\eps})).
\end{multline*}
It can be easily seen by Fourier space definition of $\mscr{C}$ at \eqref{eq:def-opera} that the submicroscopic regularizations in the second and third exponentials can be replaced by pure logarithmic singularities with a negligible error. Moreover, due to the separation condition on the singularities, the exponents of the last three exponential terms in the right-hand side are negligible because the cross terms are negligible. For the sake of brevity we only discuss it for the first exponential out of those there, i.e. we prove that
$$
\mscr{C} (L_0^{x,{\rm loc}}, L_t^{y,{\rm loc}})  = \OO( N^{-\epsilon})
$$
for any pair of distinct singularities. Indeed,  since $\langle f , g \rangle_{L^2( \frac{\lambda}{2\pi} )} = \sum_k \hat{f}_k \hat{g}_{-k}$, $\frac{d}{dt} \langle {\rm P}_{t} f, g\rangle_{L^2( \frac{\lambda}{2\pi} )}   = -\langle {\rm P}_{t} f, g \rangle_{\rm H}$. Note that
$$
\int_{\mb{U}} {\rm P}_t f(w) g(w) \lambda(dw) = \int_{\mb{U}^2} g(w) \frac{1}{2\pi} \re \left( \frac{w'+w e^{-t}}{w'-w e^{-t}} \right) f(w') \lambda(\rd w) \lambda(\rd w')
$$
and $\frac{\rd}{\rd t} \frac{w'+w e^{-t}}{w'-w e^{-t}} = \frac{\rd}{\rd t} \frac{2 w'}{w'-we^{-t}} = - 2 \frac{w' w e^{-t}}{(w'-w e^{-t})^2}$, so, denoting $d=N \min_{i \neq j} (|(e^{\ii x_i},t_i)- (e^{\ii x_j}, t_j) | \geq N^{\delta}$, we bound from above 
\begin{equation}
\label{eq:trunc-sing-indep}
| \mscr{C} (L_0^{x,\lambda}, L_t^{y,\lambda}) | \leq C ( \lambda/N)^2 \cdot \log(\lambda/N)^2\,  \max(1, \min(t^{-2}, |e^{\ii x} - e^{\ii y}|^{-2} ))
\leq N^\kappa \lambda^2/d^2
\end{equation}
by using $\sup_{|w'-e^{\ii x}| < \lambda/N, |w-e^{\ii y}|<\lambda/N} \frac{1}{|w'-w e^{-t} |^2} \leq C \max(1, \min(t^{-2}, |e^{\ii x} - e^{\ii y}|^{-2} ))$.

To conclude,  we use \eqref{eq:def-opera}, \eqref{eq:cov-id-1}, \eqref{eq:cov-id-2} combined with \eqref{eq:cov-field}, 
 This concludes the proof of
 \begin{multline}
 \E\Big[e^{\sum_{s \in \mathscr{B}} \Tr f_s(U_s)  } \prod_{z = t + \ii \theta\in\mathscr{A}} | \det (U_t - e^{\ii \theta}) |^{\gamma_z} \Big]\leq e^{N \sum_{\mathscr{B}}  \dashint f_s + \frac{1}{2} \sum_{\mathscr{B}^2} (f_s, {\rm P}_{|s-s'|} f_{s'})_{\rm H} - \sum_{z \in \mathscr{A}, s\in\mathscr{B}} \frac{\gamma_{z}}{2} ( {\rm P}_{|t-s|} - {\rm P}_{\infty}) f_s (e^{\ii \theta})} \\
\times \prod_{\mathscr{A}} N^{ \frac{\gamma_z^2}{4}} \frac{{\rm G}(1+\frac{\gamma_z}{2})^2}{{\rm G}(1+\gamma_z)} \prod_{z,w \in \mathscr{A}, z \neq w} \left( \frac{\max(|e^z|,|e^w|)}{|e^z-e^w|} \right)^{\frac{1}{4} \gamma_z \gamma_w} (1+{\rm O}( N^{-\eps})).
\end{multline}
For the other direction of the inequality, it suffices to replace all $\ell_+$ smoothings with $\ell_-$'s from the beginning, the steps are identical.
\end{proof}

\subsection{Theorem \ref{thm:gmc-cv}}.
In this proof, we prefer simplicity/brevity to generality and present only the details for the $L^2$ phase (namely $\gamma \in (0, 2)$). The parameter $\gamma$ is fixed throughout the proof so we drop it from the notation. In the Gaussian setup, \cite{Ber2017} gave an elementary approach for the convergence of GMC measures for a natural class of approximations, including the $L^1$ phase (corresponding here to $\gamma \in [2, 2\sqrt{2})$  using barrier estimates. In random matrix theory, the works \cite{LamOstSim2018} and \cite[Section 3]{ NikSakWeb20} explain how barrier estimates and in particular the convergence in the $L^1$ phase follow from Theorem \ref{thm:FH}. 

Let $\mu_{N}^{(\eps)}$  be the $2d$ GMC measure with parameter $\gamma$ associated to the field $h_N^{(\eps)}(t,\cdot) := {\rm P}_{\eps} h_N(t, \cdot)$. For any continuous function $f$ on $[0,1] \times \mb{U}$, the $L^2$ norm of $\int_{[0,1] \times \mb{U}} f (\rd\mu_{N}^{(\eps)} - \rd\mu_{N})$ vanishes when taking $N \to \infty$ and then $\eps \to 0$ (details on this are given below). Furthermore, $h_N^{(\eps)}$ converges to a smooth Gaussian field $h^{(\eps)}$ whose covariance kernel is given by $\E({\rm P}_{\eps} h (s,x) {\rm P}_{\eps} h (t,y)) = \frac{1}{2} \sum_{k \geq 1} \frac{\cos(k (x-y))}{k} e^{-|k| |t-s|} e^{- 2 \eps |k|} = {\rm P}_{2\eps + |t-s|} C(x-y)$ where $C(x-y) = \E(h_0(x) h_0(y))$. Finally, the GMC $e^{\gamma h^{(\eps)}}$ converges to the GMC $e^{\gamma h}$ by \cite[Theorems 3, 25]{Sha2016}. Altogether, this concludes the proof of Theorem \ref{thm:gmc-cv} for $\gamma \in (0,2)$.

Now, we provide some details on the $L^2$ estimates. Three terms arise:
$$
\frac{\E( e^{\gamma h_N^{(\eps)}(s,x)} e^{\gamma h_N^{(\eps)}(t,y)} )}{\E (e^{\gamma h_N^{(\eps)}(s,x)} ) \E (e^{\gamma h_N^{(\eps)}(t,y)} )}, 
\qquad 
\frac{\E( e^{\gamma h_N^{(\eps)}(s,x)} e^{\gamma h_N(t,y)} )}{\E (e^{\gamma h_N^{(\eps)}(s,x)} ) \E (e^{\gamma h_N(t,y)} )},
\quad 
\text{and} 
\qquad 
\frac{\E( e^{\gamma h_N(s,x) } e^{\gamma h_N(t,y)} )}{\E (e^{\gamma h_N(s,x)} ) \E (e^{\gamma h_N(t,y)} )}.
$$
Set $f_x = \log | e^{\ii \cdot} - e^{\ii x} |$ and $f_x^{(\eps)} = {\rm P}_\eps f_x$. By applying Theorem \ref{thm:FH}  (with one singularity or one smooth function), we obtain the asymptotics of the normalizing constants: $\lim_{N \to \infty} \E (e^{\gamma h_N^{(\eps)}(s,x)} ) = e^{\frac{\gamma^2}{2} \| f_x^{(\eps)} \|_{ \rm H}^2}$  and $\lim_{N \to \infty} N^{-\frac{\gamma^2}{4}} \E (e^{\gamma h_N(s,x)} ) = \frac{{\rm G}(1+\gamma/2)^2}{{\rm G}(1+\gamma)}$. Still with Theorem \ref{thm:FH} (and this time only pairwise terms contribute), we obtain the $2$-point asymptotics
\begin{align*}
\frac{\E( e^{\gamma h_N^{(\eps)}(s,x)} e^{\gamma h_N^{(\eps)}(t,y)} )}{\E (e^{\gamma h_N^{(\eps)}(s,x)} ) \E (e^{\gamma h_N^{(\eps)}(t,y)} )} & \underset{N\to\infty}{\sim} e^{\frac{\gamma^2}{2} \times 2 (f_x^{(\eps)}, {\rm P}_{|t-s|} f_y^{(\eps)})_H }  = e^{\gamma^2 {\rm P}_{|t-s|+2\eps} C(x-y)},\\
 \frac{\E( e^{\gamma h_N^{(\eps)}(s,x)} e^{\gamma h_N(t,y)} )}{\E (e^{\gamma h_N^{(\eps)}(s,x)} ) \E (e^{\gamma h_N(t,y)} )} &\underset{N\to\infty}{\sim} e^{- \frac{\gamma^2}{2} {\rm P}_{|t-s|} f_x^{(\eps)}(e^{\ii y}) } = e^{\gamma^2 {\rm P}_{|t-s|+\eps} C(x-y)},\\
 \frac{\E( e^{\gamma h_N(s,x) } e^{\gamma h_N(t,y)} )}{\E (e^{\gamma h_N(s,x)} ) \E (e^{\gamma h_N(t,y)} )}  & \underset{N\to\infty}{\sim} (e^{2 {\rm P}_{|t-s|} C(x-y)})^{\frac{\gamma^2}{4} \times 2},
\end{align*}
where we used \eqref{eq:cov-field} for the last equality.  
Note that the above asymptotics  hold {\it uniformly} in the domain allowed for   $(x,s), (y,t)$ in Theorem \ref{thm:FH} .

With $f_x(y) = - \sum_{k \geq 1} \frac{1}{2k}  (e^{\ii k(x-y)}+e^{-\ii k(x-y)})$, we find $(f_x, f_y)_{\rm H} = \frac{1}{2} \sum_{k \geq 1} \frac{\cos(k (x-y))}{k}$ and $(f_x^{(\eps)}, {\rm P}_{|t-s|} f_y^{(\eps)})_H = \frac{1}{2} \sum_{k \geq 1} \frac{\cos(k(x-y))}{k} e^{-|t-s|k} e^{-2 \eps k}$. 

For small mesoscopic contributions, we use the Cauchy-Schwarz inequality and obtain (again from \eqref{eq:asymptotics} but with a $2\gamma$ singularity) as $N \to \infty$, 
$$
\frac{\E(e^{2\gamma h_M^{(\eps)}(0,0)})}{\E(e^{\gamma h_M^{(\eps)}(0,0)})^2} \asymp \frac{N^{\frac{(2\gamma)^2}{4}}}{N^{\frac{\gamma^2}{2}\times 2}} = N^{\frac{\gamma^2}{2}}
$$
so, for $\eps$ small enough, the contributions to the $L^2$ norm of the points $z,w \in [0,1] \times \mb{U}$ with $|z-w| < N^{-1+\eps}$ vanishes. Therefore, $\lim_{\eps \to 0} \lim_{N \to \infty} \E (\int_{[0,1] \times \mb{U}} f (\rd\mu_{N}^{(\eps)} - \rd\mu_{N}))^2 $ is equal to
$$
\lim_{\eps \to 0}  \int_{([0,1] \times \mb{U} )^2} f(s,x) f(t,y)  (e^{\gamma^2 {\rm P}_{|t-s|+2\eps} C(x-y)} - 2 e^{\gamma^2 {\rm P}_{|t-s|+\eps} C(x-y)}  + e^{\gamma^2 {\rm P}_{|t-s|} C(x-y)} ) = 0,
$$
hence the aforementioned $L^2$ estimate.
To justify the above limit, let us denote $A_\omega:=\int f(s,x) f(t,y) e^{\gamma^2 {\rm P}_{|t-s|+\omega} C(x-y)}$. Because the integrand is non-negative we have $A_{2\epsilon}-2A_{\epsilon}+A_0\geq 0$ and by Fatou's lemma we have $\liminf_{\epsilon\to 0} (2A_\epsilon-A_{2\epsilon})\geq A_0$; combining these two completes the proof.

This paper is focused on the measures as in our framework the limiting $2d$ LQG measure is connected with many topics of $2d$ random geometry, as outlined above. However, Theorem \ref{thm:FH} has other direct consequences which we list below.

\begin{remark}\label{rem:logcor} Theorem \ref{thm:FH} implies directly the pointwise convergence of $h_N(z)=\log|\det(e^{\ii\theta}-U_t)|$ (where $z=t+\ii\theta$) to a Gaussian logarithmically-correlated field: $(\frac{1}{2}\log N)^{-1/2}(h_N(z),h_N(z'))$ converges in distribution to $(\mathscr{N}_z,\mathscr{N}_{z'})$ where these standard Gaussians  have asymptotic covariance $-\log|z-z'|/\log N$ for $|z-z'|$ on mesoscopic scale. 
\end{remark}

\begin{remark}\label{rem:max} For $\Omega$  any fixed compact set in $\mathbb{R}\times \mathbb{U}$ with non-empty interior, yet
another corollary is the asymptotics 
$$(\log N)^{-1}\max_{z\in\Omega}|h_N(z)|\to \sqrt{2}$$ 
in probability,  i.e.  the space-time analogue of the main result in \cite{ArgBelBou2017}.  For fixed time this maximum is known up to second order \cite{PaqZei2018},  tightness \cite{ChaMadNaj2018} and distribution \cite{PaqZei2022}; it is an interesting question whether Theorem \ref{thm:FH} can help to approach this precision on $\Omega$, or if our $2d$ framework is useful to study fine properties of the maximum of the $1d$ restriction of the field.

In the same vein as equation (\ref{eq:LQGconvergence2}),  Theorem \ref{thm:FH} (more precisely its natural analogue for $\im\log$) also captures the maximum deviation of the eigenvalues along trajectories.  Indeed, ordering the initial eigenangles at equilibrium $0\leq \theta_1(0)\leq\dots\leq \theta_N(0)\leq 2\pi$,  and denoting $\gamma_k=\frac{2\pi k}{N}$, $t=N^{-1+\beta}$ ($0\leq \beta\leq 1$), we have (in probability),
\begin{equation}\label{eqn:maxDyn}
\frac{N}{\log N}\max_{0\leq s\leq t,1\leq k\leq N}|\theta_k(s)-\gamma_k|\to 2\sqrt{1+\beta}.
\end{equation}
\end{remark}

Finally,  we note two interesting questions related to our results.  First,  in the context of random tilings, (\ref{eqn:maxDyn}) is the analogue of the maximal  deviation of the height function from the hydrodynamic limit.  Asymptotics of this maximum and convergence to LQG are not known in this context.
Moreover, instead of considering an infinite volume surface, the unitary Brownian bridge with same (Haar-distributed) starting ($t=0$) and ending point ($t=1$) provides a natural framework in Random Matrix Theory to generate the LQG measure on a finite volume surface without boundary, the torus $\mb{R}/2\pi \mb{Z} \times \mb{R} / \mb{Z}$.  The general surgery and some methods developed in this work may apply to these problems.

\nc
\setcounter{equation}{0}
\setcounter{theorem}{0}
\renewcommand{\theequation}{A.\arabic{equation}}
\renewcommand{\thetheorem}{A.\arabic{theorem}}
\appendix
\setcounter{secnumdepth}{0}
\section[Appendix]{Appendix}

In the following paragraphs, we present some standard formulas, accompanied with a proof to be self-contained and some identities that are used in the manuscript as well as an extension of Theorems \ref{thm:gmc-cv} and \ref{thm:FH} to incorporate jump singularities.\\

\noindent{\bf Helffer-Sj{\H o}strand formula, Hilbert transform and Poisson kernel}.  This paragraph presents the natural unitary analogue of the classical 
Helffer-Sj{\H o}strand formula, originally used to develop an alternative functional calculus for self-adjoint operators \cite{Dav} and of great use in random matrix theory, see \cite{ErdYau2017}, and its interplay with the Poisson kernel.

  Let $\tilde g=g(w)$ be a quasi-analytic extension of $g$, i.e. 
$g$ and $\tilde g$ coincide on the unit circle and $\partial_{\bar w}\tilde g(w)={\rm O}(||w|-1|)$ (We could also impose $\partial_{\bar w}\tilde g(w)={\rm O}(||w|-1|^p)$ for arbitrary fixed $p\geq1$).  
In practice we often use the following natural analogue of the Hermitian formulas from \cite{Dav,ErdYau2017}, with representation in polar coordinates ($w=r e^{\ii\theta}$), as in \cite{AdhLan2023}: 
\begin{equation}
\label{eqn:HS-exp}
\tilde g(w)=(g(e^{\ii\theta})-\ii g'(e^{\ii\theta}) \log r)\chi(r),
\end{equation}
where $\chi=\chi_c=1$ on $\exp([-c,c])$,  $0$ on $\exp([-2c,2c]^{\rm c})$,  and $|\chi'|\leq 10c^{-1}$, $|\chi''|\leq 10c^{-2}$.  Furthermore, we used the notation $g'(e^{\ii\theta})$  for the differential of $\theta\mapsto g(e^{\ii\theta})$, and similarly for $g''$. Note that for this specific form of $\tilde g$  we have
\begin{equation}\label{eqn:dbar}
\partial_{\bar w} \tilde g(w)=\frac{e^{\ii\theta}}{2}(g(e^{\ii\theta})-\ii g'(e^{\ii\theta}) \log r)\chi'(r)+\frac{e^{\ii\theta}}{2r}   g''(e^{\ii\theta}) \chi(r) \log r.
\end{equation}

Let $m$ denote the Lebesgue measure on $\mathbb{C}$.  Assume also that $\tilde g$ is compactly supported.  Green's theorem  in complex coordinates can be written (in the case of outer boundary)
$$
\int_D \partial_{\bar w}f(w) \rd m(w)=\frac{1}{2}\int_D (\partial_x f-\partial_y (-\ii f)) \rd m(w)=\frac{1}{2}\int_{\partial D}\left(-\ii f\rd x+f\rd y\right)=-\frac{\ii}{2}\int_{\partial D}f(w)\rd w.
$$
This gives, for any $|z|< 1$, (note that we have a sign change due to inner boundary)
\begin{multline}\label{eqn:upper}
\frac{1}{\pi}\int_{|w|>1}\partial_{\bar w}\tilde g(w)\cdot \frac{z+w}{z-w}\frac{\rd m(w)}{w}=\frac{1}{\pi}\int_{|w|>1}\partial_{\bar w}\left(\tilde g(w) \frac{z+w}{z-w}\right)\frac{\rd m(w)}{w}\\
=\frac{\ii}{2\pi}\int_{|w|=1}g(w) \frac{z+w}{z-w}\frac{\rd w}{w}=-\int_0^{2\pi}g(e^{\ii\theta})\frac{z+e^{\ii\theta}}{z-e^{\ii\theta}}\frac{\rd\theta}{2\pi}.
\end{multline}
Similarly,  for any $|z|> 1$,
\begin{multline}\label{eqn:lowerbis}
\frac{1}{\pi}\int_{|w|<1}\partial_{\bar w}\tilde g(w)\cdot \frac{z+w}{z-w}\frac{\rd m(w)}{w}=\frac{1}{\pi}\int_{|w|<1}\partial_{\bar w}\left(\tilde g(w) \frac{z+w}{z-w}\right)\frac{\rd m(w)}{w}\\
=-\frac{\ii}{2\pi}\int_{|w|=1}g(w) \frac{z+w}{z-w}\frac{\rd w}{w}=\int_0^{2\pi}g(e^{\ii\theta})\frac{z+e^{\ii\theta}}{z-e^{\ii\theta}}\frac{\rd\theta}{2\pi}.
\end{multline}
Defining $w_t=we^{t} \mathds{1}_{|w| > 1} + w e^{- t} \mathds{1}_{|w|<1}$,  from (\ref{eqn:upper}) and (\ref{eqn:lowerbis}) we obtain,  for any 
$|z|=1,z=e^{\ii\varphi}$ and $t>0$ we have
\begin{multline}\label{eqn:HSPoisson}
\frac{1}{2\pi}\int_{\mathbb{C}}\partial_{\bar w}\tilde g(w)\cdot \frac{z+w_t}{z-w_t}\frac{\rd m(w)}{w}=
-\frac{1}{2}\int_0^{2\pi}g(e^{\ii\theta})\frac{ze^{-t}+e^{\ii\theta}}{ze^{-t}-e^{\ii\theta}}\frac{\rd\theta}{2\pi}
+\frac{1}{2}\int_0^{2\pi}g(e^{\ii\theta})\frac{ze^{t}+e^{\ii\theta}}{ze^{t}-e^{\ii\theta}}\frac{\rd\theta}{2\pi}\\
=
\int_0^{2\pi}g(e^{\ii\theta})\frac{1}{2}\left(
\frac{1+e^{-t}e^{\ii(\varphi-\theta)}}{1-e^{-t}e^{\ii(\varphi-\theta)}}
+
\frac{1+e^{-t}e^{\ii(\theta-\varphi)}}{1-e^{-t}e^{\ii(\theta-\varphi)}}
\right)\frac{\rd\theta}{2\pi}
=
\int_0^{2\pi}g(e^{\ii\theta}){\rm Re}
\frac{1+e^{-t}e^{\ii(\varphi-\theta)}}{1-e^{-t}e^{\ii(\varphi-\theta)}}
\frac{\rd\theta}{2\pi}={\rm P}_t g(z).
\end{multline}
Recall that the Hilbert transform of a function on $\partial\mathbb{D}$ can be defined through a principal value or in Fourier space:
\begin{equation}\label{eqn:Hilbert}
{\rm H} f(e^{\ii\theta})=\int_0^{2\pi}\frac{f(e^{\ii\varphi})-f(e^{\ii\theta})}{\tan\frac{\varphi-\theta}{2}}\frac{1}{2\pi}\rd\varphi,\ \ \ 
{\rm H} f(z)=\sum_{k\in\mathbb{Z}} \ii (\mathds{1}_{k\geq1}-\mathds{1}_{k\leq-1})\hat f_k z^k.
\end{equation}
From (\ref{eqn:upper}) and (\ref{eqn:lowerbis}), with  converging series expansion,  we easily obtain
\begin{equation}\label{eqn:HilbertPoisson}
\frac{1}{2\pi}\left(\int_{|w|>1}-\int_{|w|<1}\right)\partial_{\bar w}\tilde g(w)\cdot \frac{z+w_t}{z-w_t}\frac{\rd m(w)}{w}=\ii\, {\rm H}\, {\rm P}_tf(z).
\end{equation}
Finally, for general $z$ we have
\begin{multline}\label{eqn:HS-rep}
\frac{1}{2\pi}\int_{\mathbb{C}}\partial_{\bar w}\tilde g(w)\cdot \frac{z+w}{z-w}\frac{\rd m(w)}{w}=\frac{1}{2\pi}\lim_{\varepsilon\to 0}\int_{D(z,\varepsilon)^{\rm c}}\partial_{\bar w}\left(\tilde g(w) \frac{z+w}{z-w}\right)\frac{\rd m(w)}{w}\\
=\frac{\ii}{4\pi}\lim_{\varepsilon\to 0}\int_{C(z,\varepsilon)}\tilde g(w) \frac{z+w}{z-w}\frac{\rd w}{w}=\frac{\ii}{2\pi}\tilde g(z)\lim_{\varepsilon\to 0}\int_{C(z,\varepsilon)} \frac{\rd w}{z-w}=\tilde g(z).
\end{multline}

\noindent{\bf Poisson summation}.
We denote by $p_t(x)$ the one-dimensional heat kernel on the real line, i.e., $p_t(x) = \frac{e^{-x^2/2t}}{\sqrt{2\pi t}}$. The formula below is a generalization of the usual Poisson summation formula and is related with the transformation formula of the theta function.

\begin{lemma}
\label{lem:Poisson-Summation-Formula}
For every $\delta \in \mb{R}$, $x \in \mb{R}$ and $t >0$, $\sum_{ k \in \mb{Z}} e^{2 \ii \pi k \delta}  p_t(x + 2k \pi)  = \frac{1}{2\pi} \sum_{n \in \mb{Z}}  e^{\ii x (n + \delta)} e^{- \frac{(n+\delta)^2 t}{2}}$.
\end{lemma}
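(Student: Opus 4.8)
The plan is to recognize Lemma~\ref{lem:Poisson-Summation-Formula} as an instance of the Poisson summation formula applied to a suitably shifted and modulated Gaussian, and to verify it by a direct Fourier-analytic computation. First I would consider the function $g(x) = \sum_{k \in \mb{Z}} e^{2\ii\pi k\delta} p_t(x + 2k\pi)$, which is a $2\pi$-periodic function of $x$ (up to the twist by $e^{2\ii\pi k\delta}$, it is genuinely $2\pi$-periodic because shifting $x$ by $2\pi$ reindexes $k \mapsto k-1$ and multiplies by $e^{-2\ii\pi\delta}$; more precisely $g(x+2\pi) = e^{-2\ii\pi\delta} g(x)$, so $e^{\ii\delta x} g(x)$ is honestly $2\pi$-periodic after the right normalization, or one simply works with the quasi-periodicity directly). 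The cleanest route is to compute the Fourier coefficients of $x \mapsto g(x)$ against the characters $e^{\ii(n+\delta)x}$.

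The key steps, in order: (i) Write the left-hand side as $\sum_{k} e^{2\ii\pi k\delta} p_t(x+2k\pi)$ and, treating it as a function on $\mb{R}$, take its continuous Fourier transform term by term; using $\widehat{p_t}(\xi) = e^{-t\xi^2/2}$ (with the convention $\widehat{f}(\xi) = \int f(x) e^{-\ii x\xi}\rd x$) and the shift rule, the transform of $p_t(\cdot + 2k\pi)$ is $e^{2\ii k\pi\xi} e^{-t\xi^2/2}$. (ii) Summing the geometric-type series $\sum_k e^{2\ii\pi k(\delta+\xi)}$ in the distributional sense yields a Dirac comb $\sum_{n} \delta(\xi + \delta - n)$ by the standard Poisson summation identity $\sum_k e^{2\ii\pi k u} = \sum_n \delta(u-n)$. (iii) Inverting the Fourier transform and picking up the masses at $\xi = n - \delta$ produces $\frac{1}{2\pi}\sum_n e^{\ii x(n-\delta)} e^{-t(n-\delta)^2/2}$; replacing the summation index $n \mapsto -n$ (and using that $\delta$ ranges over all of $\mb{R}$, or equivalently that the set $\{n-\delta\}$ equals $\{-(n+\delta)\}$) gives exactly the claimed right-hand side $\frac{1}{2\pi}\sum_n e^{\ii x(n+\delta)} e^{-(n+\delta)^2 t/2}$. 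An alternative, fully rigorous route avoiding distributions: apply the classical Poisson summation formula $\sum_k F(k) = \sum_n \widehat{F}(2\pi n)$ to $F(y) = e^{2\ii\pi\delta y} p_t(x + 2\pi y)$, whose Fourier transform one computes explicitly as a Gaussian in the dual variable; this reduces the lemma to a one-line substitution.

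I do not expect any genuine obstacle here — this is a standard computation. The only points requiring a little care are bookkeeping: getting the normalization of the Fourier transform consistent with the $\frac{1}{2\pi}$ on the right-hand side, tracking the sign in the exponent of the Gaussian through the transform (the Fourier transform of a Gaussian is a Gaussian, no sign flip, but the shift introduces the oscillatory factor that becomes the comb), and correctly matching $n+\delta$ versus $n-\delta$ via reindexing. Convergence is not an issue since $p_t$ decays faster than any polynomial, so all series converge absolutely and uniformly on compacts in $x$, legitimizing the term-by-term manipulations (or, in the Poisson-summation formulation, the hypotheses of the classical theorem are trivially met). One should also note the elementary check $t \to \infty$ where both sides tend to behaviors consistent with each other, and $\delta = 0$ where this reduces to the textbook theta transformation formula, as sanity checks but not as part of the proof.
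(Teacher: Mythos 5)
Your route --- applying the classical Poisson summation formula to $F(y) = e^{2\ii\pi\delta y}\, p_t(x+2\pi y)$ --- is a genuinely different derivation from the paper's, which instead starts from $e^{-(n+\delta)^2 t/2} = \E\big(e^{\ii(n+\delta)B_t}\big)$, breaks $\mathbb{R}$ into the arcs $[2k\pi, 2(k+1)\pi)$, multiplies by $e^{-\ii y(n+\delta)}$, sums over $|n|\le N$, and lets the Dirichlet kernel concentrate. Both are standard periodization arguments; yours is more compact but invokes Poisson summation as a black box, whereas the paper's is elementary and self-contained.

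The one genuine problem is a sign in your last step. Your computation correctly yields
$$
\sum_{k\in\mathbb{Z}} e^{2\ii\pi k\delta}\, p_t(x+2k\pi) \;=\; \frac{1}{2\pi}\sum_{n\in\mathbb{Z}} e^{\ii x(n-\delta)}\, e^{-(n-\delta)^2 t/2},
$$
but the reindexing $n\mapsto-n$ turns the right-hand side into $\frac{1}{2\pi}\sum_n e^{-\ii x(n+\delta)}e^{-(n+\delta)^2 t/2}$, not $\frac{1}{2\pi}\sum_n e^{+\ii x(n+\delta)}e^{-(n+\delta)^2 t/2}$: the set equality $\{n-\delta\}=\{-(n+\delta)\}$ is of course correct, but the phase $e^{\ii x(\cdot)}$ is not invariant under negating its argument, so you in fact obtain the complex conjugate of the stated expression, and for generic $\delta$ (e.g.\ $\delta = 1/4$) the two sums are genuinely different. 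Notably, the paper's own proof arrives at $\sum_n e^{-\ii y(n+\delta)}e^{-(n+\delta)^2 t/2}=2\pi\sum_k e^{2\ii\pi k\delta}p_t(y+2k\pi)$ --- the same minus sign in the exponential --- so the lemma as stated contains a sign typo. Where the lemma is actually applied (the proof of Proposition \ref{Prop:DPP-kernel}, with $\delta=-\tfrac{N+1}{2}$), one has $2\delta\in\mathbb{Z}$, the lattices $\{n-\delta\}$ and $\{n+\delta\}$ coincide, and the two forms agree after the honest reindexing $n\mapsto n+2\delta$, so the typo is harmless there. Your argument is therefore essentially sound, but the final $n\mapsto-n$ step should be dropped or corrected: the identity the Poisson computation gives (with $e^{-\ii x(n+\delta)}$) is the true one, and it is what the application uses.
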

\begin{proof} This  follows by writing, with $B_t$ distributed as a centered Gaussian variable with variance $t$,
$$
e^{- \frac{(n+\delta)^2 t}{2}} = \E ( e^{ i (n+\delta) B_t}) = \sum_{ k \in \mb{Z}} \int_{2k\pi}^{2(k+1)\pi} e^{\ii (n +\delta) y} p_t(y) \rd y =   \sum_{ k \in \mb{Z}} \int_0^{2\pi} e^{\ii (n+\delta) (u+2k\pi)} p_t(u + 2k \pi) \rd u
$$
multiplying it by $e^{- \ii y(n+\delta)} $ and by summation, i.e. 
$$
\sum_{n \in \mb{Z}} e^{- \ii y(n+\delta)} e^{- \frac{(n+\delta)^2 t}{2}}  = \lim_{N \to \infty} \int_0^{2\pi}  \left(\sum_{n = -N}^N e^{\ii n (u-y)} \right) e^{\ii \delta (u-y) }  \sum_{k \in \mb{Z}} e^{2 \ii k \pi \delta} p_t(u+2k\pi) \rd u.
$$
The limit follows from basic properties of the Dirichlet kernel.
\end{proof}

\noindent{\bf Single-time loop equation}. 
In this subsection we prove (\ref{eqn:single_loop_step}).

Loop equations, also called Schwinger–Dyson equations, are functional identities that have long been a central analytic method in random matrix theory, widely used to study global and local statistics of eigenvalues, they underpin many modern results on fluctuations, large deviations, etc. The following lines outline the main steps in proving the equations \eqref{eqn:single_loop_step} and \eqref{eqn:single_loop_second_app},  closely following \cite{lambert2021mesoscopic}.

In this subsection, all the functions are assumed to have domain $\T=[-\pi,\pi]$ for convenience. Given function $f:\T\to\R$, define the centered linear statistics $S(f)=\sum_{k=1}^{N}f(\theta_k)-\dashint_{\T} f(\theta)\rd \theta$ and let $\nu$ denote the centered empirical spectral measure, i.e., $\int f\rd\nu=S(f)$. The single-time loop equation for the CUE (Lemma 2.1 in \cite{lambert2021mesoscopic}) is given by
 \begin{align}\label{eqn:single_loop}
\mathbb{E}_{f}(S(-{\rm H} h))=&\int_{\T} h(x)f'(x)\frac{1}{2\pi}\rd x+\frac{1}{N}\mathbb{E}_{f}(S(h f'))+\frac{1}{2N}\mathbb{E}_{f}\Big(\int_{\T}\int_{\T} \frac{h(x)-h(y)}{\tan(\frac{x-y}{2})}\rd\nu(x)\rd\nu(y)\Big)
\end{align}
where the biased measure is given by $\mathbb{E}_{f}[\cdot]=\mathbb{E}[\cdot\frac{e^{\Tr(f)}}{\mathbb{E}[e^{\Tr(f)}]}]$ and $\rm H$ is defined as in \eqref{eqn:Hilbert}. By the Fourier space representation of Hilbert transform one can directly see that for all $f\in L^2(\T)$, $-{\rm H}( {\rm H}f)=f-\hat{f}_0$ a.e. Moreover, if $f\in \mathscr{C}^{1}(\T)$, the equality holds everywhere on $\T$. 

Now, we can discuss the proof of \eqref{eqn:single_loop_step}. Since it's a single-time problem from now on, we omit the time indices and also write $\Tr f$ for $\Tr f(U)$. Moreover, because the problem is rotationally symmetric, we can assume that the singularity is at $0$, and so we omit the $x$ superscript in $L_+^x$ as well. Applying the loop equation \eqref{eqn:single_loop} for $f=L_++\nu p$ and $h={\rm H}p-{\rm H}p(0)$, assuming the second and the third terms in the loop equation \eqref{eqn:single_loop} are negligible we obtain,
\begin{multline*} 
\E( e^{\Tr L_++\Tr p})=\E( e^{\Tr L_+})\exp \left( \int_0^1 \frac{\rd}{\rd\nu} \log  \E (e^{\Tr L_++\nu \Tr p} ) \rd\nu \right)=\E( e^{\Tr L_+})\exp \left( \int_0^1 \E_{(L_++\nu p)} ( \Tr p) \rd\nu \right)
\\
=\E( e^{\Tr L_+})\exp \left( \int_0^1 \mscr{C}(L_++\nu p,p) \rd\nu \right)(1+\OO(N^{-\delta/9})) =\E( e^{\Tr L_+})e^{\mscr{C}(p,L_{+})+\frac{1}{2}\mscr{C}(p,p)}(1+\OO(N^{-\delta/9})).
\end{multline*}
The key difference between the multi-time loop equation asymptotics (Lemma \ref{lem:loopEqn}) and the single-time loop equation, which leads us using the latter one at the end of the proof of Theorem \ref{thm:FH}, is the fact that in the single-time loop equation the freedom of choice for the constant $c$ in $h={\rm H}p+c$ eliminates the need for the compensator. Thus, the only things left are to prove the following two estimations
\begin{align}\label{eqn:two_est}
\mathbb{E}_{L_++\nu p}\big(S\big(({\rm H}p-{\rm H}p(0)) (L_++\nu p)'\big)\big)=\OO(N^{1-\delta/9})
,\quad
\mathbb{E}_{L_++\nu p}\Big(\int_{\T}\int_{\T} \frac{{\rm H}p(x)-{\rm H}p(y)}{\tan(\frac{x-y}{2})}\rd\nu(x)\rd\nu(y)\Big)=\OO(N^{1-\delta/9})
\end{align}
for $\nu\in[0,1]$. For simplicity, we consider the case $\nu=1$, the proof is identical for the other values of $\nu$. 

Recall that the rigidity holds under this biased measure as discussed as a result of Lemma \ref{lem:Johansson} and the rigidity can also be expressed as follows: Given the centered eigenangle counting function on $[-\pi,\pi]$, $g(\theta)=\sum_{k=1}^N \mathds{1}_{\theta_k\in[-\pi,\theta]}-\frac{N(\theta+\pi)}{2\pi}$, we have $\mathbb{P}_{L_++p}(\sup|g|\leq(\log N)^{10})\geq 1-N^{100}$. Thus, by integration by parts (e.g. see Proposition 1.3 in \cite{lambert2021mesoscopic})
\begin{align*} 
\mathbb{E}_{L_++ p}\big(\big|S\big(({\rm H}p-{\rm H}p(0)) (L_++ p)'\big)\big|\big)\leq (\log N)^{10}\int \Big|({\rm H}p-{\rm H}p(0)) (L_++ p)''+({\rm H}p)' (L_++ p)'\Big|+ \oo(N^{-1}).
\end{align*}
Substituting the following estimations which can be simply obtained by Taylor expansion, the required result follows easily
\begin{align} \label{eqn:hlbrt_estimate}
|({\rm H}p)^{(k)}(x)|\leq\begin{cases} N^{k(1-2\delta/5)}(\log N)^2, & |x|\leq 3N^{-1+\delta/2} \\ \frac{N^{-1+\delta/2}}{|x|^{k+1}}(\log N)^2, &{\rm otherwise} \end{cases}, \quad {\rm for }\ k=0,1,2,3.
\end{align}
Here, for the sake of brevity we discuss this upper bound only for $({\rm H}p)'(x)$ when $|x|\leq 3N^{-1+\delta/2}$, the rest follows by the same way. In the following, the inequalities hold up to an absolute constant factor,
\begin{multline*} 
|({\rm H}p)'(x)|\leq \int_\T \Big|\frac{-p'(x)}{\tan(\frac{y-x}{2})}+\frac{1}{2}\frac{p(y)-p(x)}{\sin^2(\frac{y-x}{2})}\Big|\rd y
\leq  \int_\T \Big|-p'(x)\big(\frac{1}{y-x}+\OO(|y-x|))\big)+(p(y)-p(x))\big(\frac{1}{(y-x)^2}+\OO(1)\big)\Big|\rd y
\\
\leq \OO(1)+\int_\T \Big|-\frac{p'(x)}{y-x}+\frac{p(y)-p(x)}{(y-x)^2}\Big|\rd y \leq \OO(1)+\int_{|y|< 4N^{-1+\delta/2}} \|p''\|_\infty \rd y +\big(\|p'\|_\infty\log N+\|p\|_\infty N^{1-\delta/2}\big)
=\OO(N^{1-2\delta/5})
\end{multline*}
where we have used the Taylor expansion for $p$ while evaluating the integral over the region $|y|< 4N^{-1+\delta/2}$.

The second inequality in \eqref{eqn:two_est} follows similarly: first, applying integration by parts with rigidity (Proposition 1.3 in \cite{lambert2021mesoscopic}) we obtain 
\begin{multline*} 
\mathbb{E}_{L_++ p}\Big(\Big|\int_{\T}\int_{\T} \frac{{\rm H}p(x)-{\rm H}p(y)}{\tan(\frac{x-y}{2})}\rd\nu(x)\rd\nu(y)\Big|\Big)
\\
\leq (\log N)^{20} \int_\T\int_\T \underbrace{\Big|\frac{({\rm H}p)'(x)+({\rm H}p)'(y)}{(x-y)^2}-2\frac{({\rm H}p)(x)-({\rm H}p)(y)}{(x-y)^3}\Big|}_{=:(\ast)}\rd x\rd y+ \oo(N^{-1}).
\end{multline*}
We evaluate this double integral in four separate regions as follows:
\begin{align*}
\int\int_{\substack{|x|<3N^{-1+\delta/2}\\ |y|<4N^{-1+\delta/2} }} (\ast)\rd x\rd y&\leq \|({\rm H}p)'''\|_{\infty}(N^{-1+\delta/2})^2\log N=\OO(N^{1-\delta/5}\log N)
\\
\int\int_{\substack{|x|<3N^{-1+\delta/2}\\ |y|>4N^{-1+\delta/2} }} (\ast)\rd x\rd y &\leq \log N \int_{|x|<3N^{-1+\delta/2}}\|({\rm H}p)'\|_{\infty}N^{1-\delta/2}+\|{\rm H}p\|_{\infty}(N^{1-\delta/2})^2 \rd x=\OO(N^{1-2\delta/5}\log N)
\\
\int\int_{\substack{|y|>|x|>3N^{-1+\delta/2}\\ |x-y|<|x|/2 }} (\ast)\rd x\rd y &\leq \log N\int\int_{\substack{|y|>|x|>3N^{-1+\delta/2}\\ |x-y|<|x|/2 }} \|({\rm H}p)'''\|_{L^\infty(B_{|x|/2}(x))}\rd x \rd y =\OO(N^{1-\delta/2}\log N)
\\
\int\int_{\substack{|y|>|x|>3N^{-1+\delta/2}\\ |x-y|>|x|/2 }} (\ast)\rd x\rd y&\leq \int_{|x|>3N^{-1+\delta/2}}\frac{\|({\rm H}p)'\|_{L^\infty(B_{|x|}(0)^c)}}{|x|}+\frac{\|{\rm H}p\|_{L^\infty(B_{|x|}(0)^c)}}{|x|^2} \rd x=\OO(N^{1-\delta/2}\log N)
\end{align*}
which completes the proof of \eqref{eqn:single_loop_step}. The only change for the proof of \eqref{eqn:single_loop_second_app} is to obtain similar estimations to \eqref{eqn:hlbrt_estimate} for ${\rm H}L_+^{\rm loc}$ which can be shown similarly by Taylor expansions. \\

\noindent{\bf Jump Singularities}. With a few minor changes, the proof of Theorem \ref{thm:FH} applies to other singularities: the discontinuities from $\im \log$. 
We only treated the logarithmic singularity from $\re\log$ for the sake of conciseness.
Indeed, define $\im\log\det(1-e^{-\ii\theta}U_t)=\sum_k\im\log (1-e^{\ii(\theta_k(t)-\theta)})$, with the branch choice $\im \log (1-e^{\ii\varphi})=(\varphi-\pi)/2$ if $\varphi\in[0,\pi)$,  $(\varphi+\pi)/2$ if $\varphi\in(-\pi,0)$. For later convenience, define $\arg^\theta=\im\log (1-e^{\ii(\cdot-\theta)})$. Denoting $z=t+\ii x$ and $w=s+\ii y$ and assuming $\beta_z\in [-C,C]$ in addition to the conditions of Theorem \ref{thm:FH}, when the jump singularities are involved the asymptotic formula in the theorem reads as follows:
\begin{multline}
\label{eq:asymptotics_withjump}
\E\Big[e^{\sum_{s \in \mathscr{B}} \Tr f_s(U_s)  } \prod_{z = t + \ii \theta\in\mathscr{A}} | \det (U_t - e^{\ii \theta}) |^{\gamma_z} e^{\beta_z \im\log\det(1-e^{-\ii\theta}U_t)}\Big]
\\
= e^{N \sum_{\mathscr{B}}  \dashint f_s + \frac{1}{2} \sum_{\mathscr{B}^2} (f_s, {\rm P}_{|s-s'|} f_{s'})_{\rm H} - \sum_{z \in \mathscr{A}, s\in\mathscr{B}} \big( \frac{\gamma_{z}}{2} ( {\rm P}_{|t-s|} - {\rm P}_{\infty}) f_s (e^{\ii \theta})+\frac{\beta_z}{2}{\rm P}_{|t-s|}{\rm H}f_s(e^{\ii \theta})\big)} \\
\times \prod_{\mathscr{A}} N^{\frac{\gamma_z^2+\beta_z^2}{4}}\frac{G(1+\frac{\gamma_z}{2}+\ii\frac{\beta_z}{2})G(1+\frac{\gamma_z}{2}-\ii\frac{\beta_z}{2})}{G(\gamma_z+1)} \prod_{z,w \in \mathscr{A}, z \neq w} \left( \frac{\max(|e^z|,|e^w|)}{|e^z-e^w|} \right)^{\frac{1}{4} \gamma_z \gamma_w+\frac{1}{4}\beta_z\beta_w} 
\\
\prod_{z,w \in \mathscr{A}, z \neq w} e^{\gamma_z\beta_w\mscr{C}(\ell^x_t,\arg_{s}^{y})} 
(1+{\rm O}( N^{-\eps}))
\end{multline}
where we have substituted \cite[(71)]{KeaSna2000} for the asymptotics with both jump and log-type singularities and we have calculated (here we denote $\ell^x(\theta)=\log|e^{\ii x}-e^{\ii \theta}|$)
\begin{align} 
\mscr{C}(f_0, \arg_t^x) & = \sum_{k \neq 0} |k| \hat{f}_k \cdot ( -\ii \frac{e^{\ii k x}}{2k} 1_{k \neq 0} ) e^{- |k| |t|}  = -\frac{1}{2}{\rm P}_{|t|}{\rm H}f(e^{\ii x}), \label{eq:cov-id-3}
\\
\mscr{C}(\ell^x_0, \arg_t^y) & = \sum_{k \neq 0} |k|  (-\frac{1}{2|k|}e^{-\ii kx})(-\frac{\ii}{2k}e^{\ii ky}) e^{- |k| |t|}=\frac{1}{2}\sum_{k=1}^{\infty}\frac{e^{-k|t|}}{k}\sin(k(x-y)), \label{eq:cov-id-4}
\\
\mscr{C}(\arg^x_0, \arg_t^y) & = \sum_{k \neq 0} |k|  (\frac{\ii}{2k}e^{-\ii kx})(-\frac{\ii}{2k}e^{\ii ky}) e^{- |k| |t|} =\frac{1}{2}\log \frac{\max( |e^z|,|e^w|) }{|e^z-e^w|}. \label{eq:cov-id-5}
\end{align}
The explicit computations to obtain \eqref{eq:asymptotics_withjump} require regularizing the singularities at a submicroscopic scale and performing detailed error term estimates which will not be carried out here. 

Given $\gamma^2+\beta^2<8$, 
(\ref{eq:asymptotics_withjump})
 allows us to state the following 2$d$ analogue of Theorem 2.5 of \cite{Web2015},
\begin{align} 
\label{eq:LQGconvergence_withjump}
\lim_{N \to \infty }\frac{|\det(U_{t}-e^{\ii \theta})|^\gamma e^{\beta\Im\log\det(1-e^{-\ii \theta }U_t)}}{\E( |\det(U_{t}-e^{\ii \theta})|^{\gamma}e^{\beta\Im\log\det(1-e^{-\ii \theta }U_t)} )} \rd t \rd\theta = e^{\sqrt{\gamma^2+\beta^2} h(z)} \rd z
\end{align}
and in particular for $\gamma=0$ this gives (\ref{eq:LQGconvergence2}).
In the above equation
the GMC  $e^{\sqrt{\gamma^2+\beta^2} h(z)} \rd z$ is as defined in Theorem \ref{thm:gmc-cv}, i.e. associated with 
the Gaussian free field $h$ on the cylinder $\mb{R} \times \mb{R}/2\pi \mb{Z}$, $\E(h(z)h(w))= \pi (-\Delta_{\mc{C}})^{-1}(z,w)$.

When establishing the analogue of Theorem \ref{thm:gmc-cv} as a result of the asymptotics \eqref{eq:asymptotics_withjump}, the key thing to notice is that given $\gamma_z=\gamma$ and $\beta_z=\beta$ for all $z\in\mscr{A}$, the terms involving $\mscr{C}(\ell^x_0, \arg_t^y)$ will vanish due to:
\begin{align*} 
\mscr{C}(\ell^x_0, \arg_t^y)+\mscr{C}(\ell^y_t, \arg_0^x)=0
\end{align*}
which can be seen by \eqref{eq:cov-id-4}.

\tableofcontents

\begin{bibdiv}
\begin{biblist}

\bib{AdhHua2018}{article}{
   author={Adhikari, A.},
   author={Huang, J.},
   title={Dyson Brownian motion for general $\beta$ and potential at the
   edge},
   journal={Probab. Theory Related Fields},
   volume={178},
   date={2020},
   number={3-4},
   pages={893--950}
}

\bib{AdhLan2023}{article}{
   author={Adhikari, A.},
   author={Landon, B.},
   title={Local law and rigidity for unitary Brownian motion},
   journal={Probab. Theory Related Fields},
   volume={187},
   date={2023},
   number={3-4},
   pages={753--815}
}

\bib{AngSun2021}{article}{
	author={Ang, M.},
	author={Sun, X.},
	title={Integrability of the conformal loop ensemble: Imaginary DOZZ Formula and Beyond},
	journal={prepublication, arXiv2107.01788},
	date={2021},
	}

\bib{ArgBelBou2017}{article}{
   author={Arguin, L.-P.},
   author={Belius, D.},
   author={Bourgade, P.},
   title={Maximum of the characteristic polynomial of random unitary
   matrices},
   journal={Comm. Math. Phys.},
   volume={349},
   date={2017},
   number={2},
   pages={703--751}
}

\bib{AriOCo2019}{article}{
	author = {Arista, J.},
	author ={O'{C}onnell, N.},
	journal = {J. Stat. Phys.},
	number = {3},
	pages = {528--567},
	title = {Loop-erased walks and random matrices},
	volume = {177},
	year = {2019}}

\bib{AstJonKupSak2011}{article}{
   author={Astala, K.},
   author={Jones, P.},
   author={Kupiainen, A.},
   author={Saksman, E.},
   title={Random conformal weldings},
   journal={Acta Math.},
   volume={207},
   date={2011},
   number={2},
   pages={203--254},
}

\bib{AufBenCer}{article}{
   author={Auffinger, A.},
   author={Ben Arous, G.},
   author={\v{C}ern\'{y}, J.},
   title={Random matrices and complexity of spin glasses},
   journal={Comm. Pure Appl. Math.},
   volume={66},
   date={2013},
   number={2},
   pages={165--201}
}

\bib{BaiKea}{article}{
   author={Bailey, E. C.},
   author={Keating, J. P.},
   title={On the moments of the moments of the characteristic polynomials of
   random unitary matrices},
   journal={Comm. Math. Phys.},
   volume={371},
   date={2019},
   number={2},
   pages={689--726}
}
	
\bib{Bar1900}{article}{
	author = {Barnes, E. W.},
	journal = {Quarterly Journ. Pure and Appl. Math.},
	pages = {264--314},
	title = {The theory of the G-function},
	volume = {31},
	year = {1900}}

\bib{Bas}{article}{
   author={Basor, E.},
   title={Asymptotic formulas for Toeplitz determinants},
   journal={Trans. Amer. Math. Soc.},
   volume={239},
   date={1978},
   pages={33--65}
}

\bib{BasTra}{article}{
   author={Basor, E.},
   author={Tracy, C.},
   title={The Fisher-Hartwig conjecture and generalizations},
   journal={Physica A: Statistical Mechanics and its Applications},
   volume={177},
   date={1991},
   pages={167--173}
}

\bib{Ben2017}{article}{
   author={Benigni, L.},
   title={Eigenvectors distribution and quantum unique ergodicity for
   deformed Wigner matrices},
   journal={Ann. Inst. Henri Poincar\'{e} Probab. Stat.},
   volume={56},
   date={2020},
   number={4},
   pages={2822--2867},
}

\bib{Ber2017}{article}{
   author={Berestycki, N.},
   title={An elementary approach to Gaussian multiplicative chaos},
   journal={Electron. Commun. Probab.},
   volume={22},
   date={2017},
   pages={Paper No. 27, 12}
}

\bib{BerWebWon2017}{article}{
   author={Berestycki, N.},
   author={Webb, C.},
   author={Wong, M. D.},
   title={Random Hermitian matrices and Gaussian multiplicative chaos},
   journal={Probab. Theory Related Fields},
   volume={172},
   date={2018},
   number={1-2},
   pages={103--189}
}

\bib{Bia1997}{article}{
   author={Biane, P.},
   title={Segal-Bargmann transform, functional calculus on matrix spaces and
   the theory of semi-circular and circular systems},
   journal={J. Funct. Anal.},
   volume={144},
   date={1997},
   number={1},
   pages={232--286}
}

\bib{Bia1997II}{article}{
   author={Biane, P.},
   title={Free Brownian motion, free stochastic calculus and random
   matrices},
   conference={
      title={Free probability theory},
      address={Waterloo, ON},
      date={1995},
   },
   book={
      series={Fields Inst. Commun.},
      volume={12},
      publisher={Amer. Math. Soc., Providence, RI},
   },
   date={1997},
   pages={1--19}
}

\bib{BloErdKnoYauYin}{article}{
   author={Bloemendal, A.},
   author={Erd\H{o}s, L.},
   author={Knowles, A.},
   author={Yau, H.-T.},
   author={Yin, J.},
   title={Isotropic local laws for sample covariance and generalized Wigner
   matrices},
   journal={Electron. J. Probab.},
   volume={19},
   date={2014}
}

\bib{Bor2011}{book}{
	author = {Borodin, A.},
	booktitle = {The {O}xford handbook of random matrix theory},
	pages = {231--249},
	publisher = {Oxford Univ. Press, Oxford},
	title = {Determinantal point processes},
	year = {2011}}

\bib{BorStr}{article}{
   author={Borodin, A.},
   author={Strahov, E.},
   title={Averages of characteristic polynomials in random matrix theory},
   journal={Comm. Pure Appl. Math.},
   volume={59},
   date={2006},
   number={2},
   pages={161--253}
}

\bib{BouDubHarKel2025}{article}{
	author = {Bourgade, P.},
	author ={Dubach, G.},
	author ={Hartung, L.},
	author ={Keles, A.},
	journal = {In preparation},
	title = {Fisher-{H}artwig asymptotics for non-{H}ermitian random matrices},
	year = {2025}}
	
\bib{BouHuaYau2017}{article}{
   author={Bourgade, P.},
   author={Huang, J.},
   author={Yau, H.-T.},
   title={Eigenvector statistics of sparse random matrices},
   journal={Electron. J. Probab.},
   volume={22},
   date={2017},
   pages={Paper No. 64, 38}
}

\bib{bourgade2009random}{article}{
  title={On random matrices and L-functions},
  author={Bourgade, P.},
  journal={HAL},
  volume={2009},
  year={2009},
  publisher={Citeseer}
}

\bib{BumGam}{article}{
   author={Bump, D.},
   author={Gamburd, A.},
   title={On the averages of characteristic polynomials from classical
   groups},
   journal={Comm. Math. Phys.},
   volume={265},
   date={2006},
   number={1},
   pages={227--274}
}

\bib{CepLep2001}{article}{
   author={C\'{e}pa, E.},
   author={L\'{e}pingle, D.},
   title={Brownian particles with electrostatic repulsion on the circle:
   Dyson's model for unitary random matrices revisited},
   journal={ESAIM Probab. Statist.},
   volume={5},
   date={2001},
   pages={203--224},
}

\bib{ChaFahWebWon}{article}{
   author={Charlier, C.},
   author={Fahs, B.},
   author={Webb, C.},
   author={Wong, M.-D.},
   title={Asymptotics of Hankel determinants with a multi-cut regular potential and Fisher-Hartwig singularities},
   journal={Memoirs of the American Mathematical Society},
   volume={310},
   date={2025}
}

\bib{ChaMadNaj2018}{article}{
   author={Chhaibi, R.},
   author={Madaule, T.},
   author={Najnudel, J.},
   title={On the maximum of the ${\rm C}\beta {\rm E}$ field},
   journal={Duke Math. J.},
   volume={167},
   date={2018},
   number={12},
   pages={2243--2345}
}

\bib{ChaNaj}{article}{
   author={Chhaibi, R.},
   author={Najnudel, J.},
   title={On the circle, Gaussian multiplicative chaos and Beta ensembles match exactly
},
   journal={to appear, Journal of the European Mathematical Society
},
   date={2025}
}

\bib{ClaFahLamWeb}{article}{
   author={Claeys, T.},
   author={Fahs, B.},
   author={Lambert, G.},
   author={Webb, C.},
   title={How much can the eigenvalues of a random Hermitian matrix
   fluctuate?},
   journal={Duke Math. J.},
   volume={170},
   date={2021},
   number={9},
   pages={2085--2235}
}

\bib{ColDahKem2018}{article}{
   author={Collins, B.},
   author={Dahlqvist, A.},
   author={Kemp, T.},
   title={The spectral edge of unitary Brownian motion},
   journal={Probab. Theory Related Fields},
   volume={170},
   date={2018},
   number={1-2},
   pages={49--93},
}

\bib{DavKupRhoVar2016}{article}{
   author={David, F.},
   author={Kupiainen, A.},
   author={Rhodes, R.},
   author={Vargas, V.},
   title={Liouville quantum gravity on the Riemann sphere},
   journal={Comm. Math. Phys.},
   volume={342},
   date={2016},
   number={3},
}

\bib{Dav}{article}{
   author={Davies, E. B.},
   title={The functional calculus},
   journal={J. London Math. Soc. (2)},
   volume={52},
   date={1995},
   number={1},
   pages={166--176}
}

\bib{DeiItsKra2011}{article}{
   author={Deift, P.},
   author={Its, A.},
   author={Krasovsky, I.},
   title={Asymptotics of Toeplitz, Hankel, and Toeplitz+Hankel determinants with Fisher-Hartwig singularities},
   journal={Annals of Mathematics},
   pages={1243--1299},
   volume={174},
   date={2011}
}

\bib{DeiItsKraII}{article}{
   author={Deift, P.},
   author={Its, A.},
   author={Krasovsky, I.},
   title={Toeplitz matrices and Toeplitz determinants under the impetus of
   the Ising model: some history and some recent results},
   journal={Comm. Pure Appl. Math.},
   volume={66},
   date={2013},
   number={9},
   pages={1360--1438}}

\bib{DDDF2020}{article}{
	author = {Ding, J.},
	author ={Dub\'{e}dat, J.},
	author ={Dunlap, A.},
	author ={Falconet, H.},
	journal = {Publ. Math. Inst. Hautes \'{E}tudes Sci.},
	pages = {353--403},
	title = {Tightness of {L}iouville first passage percolation for {$\gamma \in (0,2)$}},
	volume = {132},
	year = {2020}}
	
\bib{Dub2009}{article}{
   author={Dub\'{e}dat, J.},
   title={SLE and the free field: partition functions and couplings},
   journal={J. Amer. Math. Soc.},
   volume={22},
   date={2009},
   number={4},
   pages={995--1054},
}

\bib{LQG-growth}{article}{
	author = {Dub{\'e}dat, J.}
	author ={Falconet, H.},
	journal = {arXiv:2112.13933},
	title = {Metric growth dynamics in {L}iouville quantum gravity},
         journal={Communications in Mathematical Physics},
         volume =	{400},
         pages={1317--1383},
	year = {2023}}

\bib{DubShe2019}{article}{
   author={Dub\'edat, J.},
   author={Shen, H.},
   title={Stochastic Ricci Flow on Compact Surfaces},
   journal={Int. Math. Res. Not.},
   volume={2022},
   pages={12253--12301},
   date={2022},
 }

\bib{DuiJoh2018}{article}{
   author={Duits, M.},
   author={Johansson, K.},
   title={On mesoscopic equilibrium for linear statistics in Dyson's
   Brownian motion},
   journal={Mem. Amer. Math. Soc.},
   volume={255},
   date={2018},
   number={1222},
   pages={v+118}
}

\bib{DupShe2011}{article}{
   author={Duplantier, B.},
   author={Sheffield, S.},
   title={Liouville quantum gravity and KPZ},
   journal={Invent. Math.},
   volume={185},
   date={2011},
   number={2},
   pages={333--393},
}

\bib{DupMilShe2014}{article}{
   author={Duplantier, B.},
   author={Miller, J.},
   author={Sheffield, S.},
   title={Liouville quantum gravity as a mating of trees},
   journal={Ast\'{e}risque, to appear},
   date={2014},
}

\bib{Dyson62}{article}{
	author = {Dyson, F. J.},
	journal = {J. Mathematical Phys.},
	pages = {1191--1198},
	title = {A {B}rownian-motion model for the eigenvalues of a random matrix},
	volume = {3},
	year = {1962}}

\bib{Ehr}{article}{
   author={Ehrhardt, T.},
   title={A status report on the asymptotic behavior of Toeplitz
   determinants with Fisher-Hartwig singularities},
   conference={
      title={Recent advances in operator theory},
      address={Groningen},
      date={1998},
   },
   book={
      series={Oper. Theory Adv. Appl.},
      volume={124},
      publisher={Birkh\"{a}user, Basel},
   },
   date={2001},
   pages={217--241}
}

\bib{ErdYau2017}{book}{
   author={Erd\H{o}s, L.},
   author={Yau, H.-T.},
   title={A dynamical approach to random matrix theory},
   series={Courant Lecture Notes in Mathematics},
   volume={28},
   publisher={Courant Institute of Mathematical Sciences, New York; American
   Mathematical Society, Providence, RI},
   date={2017}
}

\bib{Eynard-Mehta}{article}{
	author = {Eynard, B.}
	author ={Mehta, M. L.},
	journal = {J. Phys. A},
	number = {19},
	pages = {4449--4456},
	title = {Matrices coupled in a chain. {I}. {E}igenvalue correlations},
	volume = {31},
	year = {1998}}

\bib{Fah2019}{article}{
   author={Fahs, B.},
   title={Uniform asymptotics of Toeplitz determinants with Fisher-Hartwig
   singularities},
   journal={Comm. Math. Phys.},
   volume={383},
   date={2021},
   number={2},
   pages={685--730},
}

\bib{FisHar}{article}{
   author={Fisher, M. E.},
   author={Hartwig, R. E.},
   title={Toeplitz determinants: Some applications, theorems, and conjectures},
   journal={Advan.
Chem. Phys.},
   volume={217},
   date={1968}
}

\bib{For}{article}{
   author={Forrester, P. J.},
   title={A review of exact results for fluctuation formulas in random matrix theory},
   journal={Probability Surveys},
   volume={20},
   pages={170--225},
   date={2023}
}

\bib{FyoSup}{article}{
   author={Fyodorov, Y. V.},
   title={Negative moments of characteristic polynomials of random matrices:
   Ingham-Siegel integral as an alternative to Hubbard-Stratonovich
   transformation},
   journal={Nuclear Phys. B},
   volume={621},
   date={2002},
   number={3},
   pages={643--674}
}
 
\bib{Fyo2004}{article}{
   author={Fyodorov, Y. V.},
   title={Complexity of random energy landscapes, glass transition, and
   absolute value of the spectral determinant of random matrices},
   journal={Phys. Rev. Lett.},
   volume={92},
   date={2004},
   number={24},
   pages={240601, 4}
}

\bib{FyoKea2014}{article}{
   author={Fyodorov, Y. V.},
   author={Keating, J. P.},
   title={Freezing transitions and extreme values: random matrix theory, and
   disordered landscapes},
   journal={Philos. Trans. R. Soc. Lond. Ser. A Math. Phys. Eng. Sci.},
   volume={372},
   date={2014},
   number={2007},
   pages={20120503, 32}
}

\bib{GGK}{book}{
	author = {Gohberg, I. },
	author ={Goldberg, S.},
	author ={Krupnik, N.},
	publisher = {Birkh\"{a}user Verlag, Basel},
	series = {Operator Theory: Advances and Applications},
	title = {Traces and determinants of linear operators},
	volume = {116},
	year = {2000}}

\bib{GorHua2022}{article}{
	author = {Gorin, V.},
	author ={Huang,  J.},
	title = {Dynamical Loop Equation},
	journal = {Annals of Probability},
	volume={52},
	pages={1758--1863 }
	year = {2024}}

\bib{GKRVboostrap}{article}{
	author = {Guillarmou, C.},
	author = {Kupiainen, A.},
	author = {Rhodes, R.},
	author = {Vargas, V.},
	journal = {Acta Math., to appear.},
	title = {Conformal bootstrap in Liouville Theory},
	year = {2020}}

\bib{Gui}{book}{
   author={Guionnet, A.},
   title={Asymptotics of random matrices and related models},
   series={CBMS Regional Conference Series in Mathematics},
   volume={130},
   note={The uses of Dyson-Schwinger equations;
   Published for the Conference Board of the Mathematical Sciences},
   publisher={American Mathematical Society, Providence, RI},
   date={2019},
   pages={vii+143}
}

\bib{GwyMil2021}{article}{
	author = {Gwynne, E.},
	author ={Miller, J.},
	journal = {Invent. Math.},
	number = {1},
	pages = {213--333},
	title = {Existence and uniqueness of the {L}iouville quantum gravity metric for {$\gamma\in(0,2)$}},
	volume = {223},
	year = {2021}}

\bib{Hobson-Werner}{article}{
	author = {Hobson, D.G.}
	author ={Werner, W.},
	journal = {Bull. London Math. Soc.},
	number = {6},
	pages = {643--650},
	title = {Non-colliding {B}rownian motions on the circle},
	volume = {28},
	year = {1996}}

\bib{HolSun2019}{article}{
   author={Holden, N.},
	 author={Sun, X.},
   title={Convergence of uniform triangulations under the Cardy embedding},
   journal={Acta Math., to appear.},
	 date = {2019},
}

\bib{HuaLan2019}{article}{
   author={Huang, J.},
   author={Landon, B.},
   title={Rigidity and a mesoscopic central limit theorem for Dyson Brownian motion for general $\beta$ and potentials},
   journal={Probab. Theory Relat. Fields},
   volume={175},
   date={2019}
}

\bib{Joh1998}{article}{
   author={Johansson, K.},
   title={On fluctuations of eigenvalues of random Hermitian matrices},
   journal={Duke Math. J.},
   volume={91},
   date={1998},
   number={1},
   pages={151--204}
}

\bib{Joh97}{article}{
   author={Johansson, K.},
   title={On random matrices from the compact classical groups},
   journal={Ann. of Math. (2)},
   volume={145},
   date={1997},
   number={3},
   pages={519--545},
   issn={0003-486X},
   review={\MR{1454702}},
   doi={10.2307/2951843},
}

\bib{Joh2001}{article}{
   author={Johansson, K.},
   title={Universality of the local spacing distribution in certain
   ensembles of Hermitian Wigner matrices},
   journal={Comm. Math. Phys.},
   volume={215},
   date={2001},
   number={3},
   pages={683--705},
}

\bib{Joh2003}{article}{
	author = {Johansson, K.},
	journal = {Comm. Math. Phys.},
	number = {1-2},
	pages = {277--329},
	title = {Discrete polynuclear growth and determinantal processes},
	volume = {242},
	year = {2003}}
	
\bib{Joh2005}{article}{
   author={Johansson, K.},
   title={Non-intersecting, simple, symmetric random walks and the extended
   Hahn kernel},
   journal={Ann. Inst. Fourier (Grenoble)},
   volume={55},
   date={2005},
   number={6},
   pages={2129--2145},
}

\bib{Joh2006}{article}{
	author = {Johansson, K.},
	booktitle = {Mathematical statistical physics},
	pages = {1--55},
	publisher = {Elsevier B. V., Amsterdam},
	title = {Random matrices and determinantal processes},
	year = {2006}}
	
\bib{JohNor2006}{article}{
   author={Johansson, K.},
   author={Nordenstam, E.},
   title={Eigenvalues of GUE minors},
   journal={Electron. J. Probab.},
   volume={11},
   date={2006},
   pages={no. 50, 1342--1371},
}

\bib{Kah1985}{article}{
   author={Kahane, J.-P.},
   title={Sur le chaos multiplicatif},
   language={French},
   journal={Ann. Sci. Math. Qu\'{e}bec},
   volume={9},
   date={1985},
   number={2},
   pages={105--150}
}

\bib{KarMcGreg1959}{article}{
   author={Karlin, S.},
   author={McGregor, J.},
   title={Coincidence probabilities},
   journal={Pacific J. Math.},
   volume={9},
   date={1959},
   pages={1141--1164},
}

\bib{Kato}{article}{
	author = {Kato, T.},
	journal = {Comm. Math. Phys.},
	number = {3},
	pages = {501--504},
	title = {Variation of discrete spectra},
	volume = {111},
	year = {1987}}

\bib{KeaSna2000}{article}{
   author={Keating, J. P.},
   author={Snaith, N. C.},
   title={Random matrix theory and $\zeta(1/2+it)$},
   journal={Comm. Math. Phys.},
   volume={214},
   date={2000},
   number={1},
   pages={57--89},
}

\bib{Kra2007}{article}{
   author={Krasovsky, I. V.},
   title={Correlations of the characteristic polynomials in the Gaussian
   unitary ensemble or a singular Hankel determinant},
   journal={Duke Math. J.},
   volume={139},
   date={2007},
   number={3},
   pages={581--619}
}

\bib{DOZZ}{article}{
	author = {Kupiainen, A.},
	author ={Rhodes, R.},
	author ={Vargas, V.},
	journal = {Ann. of Math. (2)},
	number = {1},
	pages = {81--166},
	title = {Integrability of {L}iouville theory: proof of the {DOZZ} formula},
	volume = {191},
	year = {2020}}

\bib{LamOstSim2018}{article}{
   author={Lambert, G.},
   author={Ostrovsky, D.},
   author={Simm, N.},
   title={Subcritical multiplicative chaos for regularized counting
   statistics from random matrix theory},
   journal={Comm. Math. Phys.},
   volume={360},
   date={2018},
   number={1},
   pages={1--54}
   }

\bib{lambert2021mesoscopic}{article}{
author = {Lambert, G.},
title = {{Mesoscopic central limit theorem for the circular $\beta $-ensembles and applications}},
volume = {26},
journal = {Electronic Journal of Probability},
number = {none},
publisher = {Institute of Mathematical Statistics and Bernoulli Society},
pages = {1 -- 33},
keywords = {central limit theorems (CLT), Gaussian multiplicative chaos (GMC), loop equations for $\beta $-ensembles, optimal rigidity estimates},
year = {2021}
}

\bib{LeeSch2015}{article}{
   author={Lee, J.-O},
   author={Schnelli, K.},
   title={Edge universality for deformed Wigner matrices},
   journal={Rev. Math. Phys},
   date={2015},
   volume={27}
}

\bib{LeGall2013}{article}{
   author={Le Gall, J.-F.},
   title={Uniqueness and universality of the Brownian map},
   journal={Ann. Probab.},
   volume={41},
   date={2013},
   number={4},
   pages={2880--2960},
}

\bib{Levy2017}{article}{
   author={L\'{e}vy, T.},
   title={The master field on the plane},
   journal={Ast\'{e}risque},
   number={388},
   date={2017},
   pages={ix+201},
}

\bib{Mehta}{book}{
	author = {Mehta, M. L.},
	edition = {Third},
	publisher = {Elsevier/Academic Press, Amsterdam},
	series = {Pure and Applied Mathematics (Amsterdam)},
	title = {Random matrices},
	volume = {142},
	year = {2004}}

\bib{Mie2013}{article}{
   author={Miermont, G.},
   title={The Brownian map is the scaling limit of uniform random plane
   quadrangulations},
   journal={Acta Math.},
   volume={210},
   date={2013},
   number={2},
   pages={319--401},
}

\bib{MilShe2016}{article}{
   author={Miller, J.},
   author={Sheffield, S.},
   title={Quantum Loewner evolution},
   journal={Duke Math. J.},
   volume={165},
   date={2016},
   number={17},
   pages={3241--3378},
}

\bib{MilShe2020}{article}{
   author={Miller, J.},
   author={Sheffield, S.},
   title={Liouville quantum gravity and the Brownian map I: the ${\rm
   QLE}(8/3,0)$ metric},
   journal={Invent. Math.},
   volume={219},
   date={2020},
   number={1},
   pages={75--152},
}

\bib{NikSakWeb20}{article}{
	author = {Nikula, M.},
	author ={Saksman, E.},
	author ={Webb, C.},
	journal = {Trans. Amer. Math. Soc.},
	number = {6},
	pages = {3905--3965},
	title = {Multiplicative chaos and the characteristic polynomial of the {CUE}: the {$L^1$}-phase},
	volume = {373},
	year = {2020}}

\bib{PanShu1991}{article}{
   author={Pandey, A.},
   author={Shukla, P.},
   title={Eigenvalue correlations in the circular ensembles},
   journal={J. Phys. A},
   volume={24},
   date={1991},
   number={16},
   pages={3907--3926},
}

\bib{PaqZei2017}{article}{
   author={Paquette, E.},
   author={Zeitouni, O.},
   title={Extremal eigenvalue correlations in the GUE minor process and a
   law of fractional logarithm},
   journal={Ann. Probab.},
   volume={45},
   date={2017},
   number={6A},
   pages={4112--4166},
}

\bib{PaqZei2018}{article}{
   author={Paquette, E.},
   author={Zeitouni, O.},
   title={The maximum of the CUE field},
   journal={Int. Math. Res. Not. IMRN},
   date={2018},
   number={16},
   pages={5028--5119}
}

\bib{PaqZei2022}{article}{
   author={Paquette, E.},
   author={Zeitouni, O.},
   title={The extremal landscape for the C$\beta$E ensemble}
   journal={Forum of Mathematics, Sigma},
   volume={13},
   date={2025}
}

\bib{Pas1972}{article}{
   author={Pastur, L.},
   title={On the spectrum of random matrices},
   journal={Theoretical and Mathematical Physics volume },
   date={1972},
   number={10}
}

\bib{Pol1981}{article}{
   author={Polyakov, A. M.},
   title={Quantum geometry of fermionic strings},
   journal={Phys. Lett. B},
   volume={103},
   date={1981},
   number={3},
   pages={211--213},
}

\bib{RhoVar2014}{article}{
   author={Rhodes, R.},
   author={Vargas, V.},
   title={Gaussian multiplicative chaos and applications: a review},
   journal={Probab. Surv.},
   volume={11},
   date={2014},
   pages={315--392}
}

\bib{RobertVargas}{article}{
   author={Robert, R.},
   author={Vargas, V.},
   title={Gaussian multiplicative chaos revisited},
   journal={Ann. Probab.},
   volume={38},
   date={2010},
   number={2},
   pages={605--631}
}

\bib{Ser}{article}{
   author={Serfaty, S.},
   title={Systems of points with Coulomb interactions},
   conference={
      title={Proceedings of the International Congress of
      Mathematicians---Rio de Janeiro 2018. Vol. I. Plenary lectures},
   },
   book={
      publisher={World Sci. Publ., Hackensack, NJ},
   },
   date={2018},
   pages={935--977}
}

\bib{Sha2016}{article}{
	author = {Shamov, A.},
	journal = {J. Funct. Anal.},
	number = {9},
	pages = {3224--3261},
	title = {On {G}aussian multiplicative chaos},
	volume = {270},
	year = {2016}}
	
\bib{She2007}{article}{
   author={Sheffield, S.},
   title={Gaussian free fields for mathematicians},
   journal={Probab. Theory Related Fields},
   volume={139},
   date={2007},
   number={3-4},
   pages={521--541},
}

\bib{She2016}{article}{
   author={Sheffield, S.},
   title={Conformal weldings of random surfaces: SLE and the quantum gravity
   zipper},
   journal={Ann. Probab.},
   volume={44},
   date={2016},
   number={5},
   pages={3474--3545},
}

\bib{She2022}{article}{
	author = {Sheffield, S.},
	journal = {To appear in the Proceedings of the International Congress of Mathematicians, arXiv2203.02470},
	title = {What is a random surface?},
	year = {2022}}

\bib{ShoWel1986}{book}{
   author={Shorack, G. R.},
   author={Wellner, J. A.},
   title={Empirical processes with applications to statistics},
   series={Wiley Series in Probability and Mathematical Statistics:
   Probability and Mathematical Statistics},
   publisher={John Wiley \& Sons, Inc., New York},
   date={1986}
}

\bib{SooWar2019}{article}{
   author={von Soosten, P.},
   author={Warzel, S.},
   title={Non-ergodic delocalization in the Rosenzweig–Porter model},
   journal={Letters in Mathematical Physics}
   volume={109},
   date={2019}
}

\bib{Spo1998}{article}{
	author = {Spohn, H.},
	journal = {Markov Process. Related Fields},
	number = {4},
	pages = {649--661},
	title = {Dyson's model of interacting {B}rownian motions at arbitrary coupling strength},
	volume = {4},
	year = {1998}}

\bib{Var2017}{article}{
   author={Vargas, V.},
   title={Lecture notes on Liouville theory and the DOZZ formula},
   journal={arXiv:1712.00829},
   date={2017},
}

\bib{Web2015}{article}{
   author={Webb, C.},
   title={The characteristic polynomial of a random unitary matrix and
   Gaussian multiplicative chaos---the $L^2$-phase},
   journal={Electron. J. Probab.},
   volume={20},
   date={2015},
   pages={no. 104, 21}
}

\bib{WebWon}{article}{
   author={Webb, C.},
   author={Wong, M. D.},
   title={On the moments of the characteristic polynomial of a Ginibre
   random matrix},
   journal={Proc. Lond. Math. Soc. (3)},
   volume={118},
   date={2019},
   number={5},
   pages={1017--1056}}

\bib{Wid}{article}{
   author={Widom, H.},
   title={Toeplitz determinants with singular generating functions},
   journal={Amer. J. Math.},
   volume={95},
   date={1973},
   pages={333--383}
}

\end{biblist}
\end{bibdiv}

\end{document}